\theoremstyle{plain}
\newtheorem{theorem}{Theorem}%
\newtheorem{proposition}[theorem]{Proposition}%
\newtheorem{lemma}{Lemma}%
\newtheorem*{corollary}{Corollary}%
\newtheorem{claim}{Claim}
\newtheorem*{claim*}{Claim}
\theoremstyle{plain}
\newtheorem{definition}{Definition}%
\newtheorem{conjecture}{Conjecture}%
\theoremstyle{plain}
\newtheorem*{hdefinition}{Heuristic definition of LBS}
\newtheorem*{htheorem}{Heuristic main theorem}
\theoremstyle{remark}
\newtheorem{remark}{Remark}%
\newtheorem{example}{Example}%
\newcommand{\ssubset}{\subset\joinrel\subset}
\newcommand{\AH}{\mathrm{AH}}
\newcommand{\SN}{\mathrm{SN}}
\newcommand{\HC}{\mathrm{HC}}
\newcommand{\SC}{\mathrm{SC}}
\newcommand{\SL}{\mathrm{SL}}
\newcommand{\PC}{\mathrm{PC}}
\DeclareMathOperator{\Vect}{Vect}
\newcommand{\supp}[1]{\operatorname{supp}{#1}}
\newcommand{\Acc}{\mathrm{Acc}}
\newcommand{\LBS}{\mathrm{LBS}}
\newcommand{\ELBS}{\mathrm{ELBS}}
\newcommand{\BD}{\mathrm{BD}}
\DeclareMathOperator{\stab}{\mathrm{Stab}}
\DeclareMathOperator{\splt}{\mathrm{Split}}
\newcommand{\proofpart}[2]{%
  \par
  \addvspace{\medskipamount}%
  \noindent\emph{Part #1: #2}\par\nobreak
  \addvspace{\smallskipamount}%
  \@afterheading
}
\newcommand{\defgerm}{Given a point $x$ of a topological space $X$, and two maps $f,\ g: X \to Y$ (where $Y$ is any set), the maps $f$ and $g$ define the same germ at $x$ if there is a neighborhood $U$ of $x$ such that restricted to $U$, $f$ and $g$ are equal: $f(u)=g(u)$ for all $u$ in $U$. Similarly, if $S$ and $T$ are any two subsets of $X$, then they have the~same germ at $x$ if there is again a neighborhood $U$ of $x$ such that $S \cap U = T \cap U$.}
\newcommand{\defsep}{An orbit $\gamma$ of a vector field is a separatrix if there is a singular point of the vector field with a hyperbolic sector $S$ s.t. $\gamma$ contains a curve that belongs to the boundary of this sector.}
\newcommand{\sparkle}{Sparkling saddle connections is a phenomenon that occurs in a generic family that unfolds a vector field with a parabolic cycle when at least one separatrix of hyperbolic saddles is winding on the cycle from both sides. Vanishing of this cycle is accompanied by an infinite series of saddle connections between the separatrices. The corresponding orbits connecting the saddles accumulate to the cycle in the sense that the cycle belongs to the closure of these orbits in the total space of the family.}
\title{Disconnected large bifurcation supports and Cartesian products of bifurcations}
\author{ \href{https://orcid.org/0000-0002-9481-3816}{\hspace{1mm}Timur~Bakiev}\thanks{Alternative email: tnbakiev@edu.hse.ru} \\
	Faculty of Mathematics\\
	HSE University\\
	Moscow, Russia 109028 \\
	\texttt{tnbakiev@hse.ru} \\
	\And
	\href{https://orcid.org/0000-0003-1087-5903}{\hspace{1mm}Yulij S.~Ilyashenko} \\
	Faculty of Mathematics\\
	HSE University\\
	Moscow, Russia 109028 \\
}
\begin{document}
\maketitle

\begin{abstract}
	A bifurcation that occurs in a multiparameter family is a Cartesian product if it splits into two factors in the sense that one bifurcation takes place in one part of the phase portrait, another one -- in another part, and they are in a sense independent, do not interact with each other. To understand how a~family bifurcates, it is sufficient to study it in a neighborhood of~the~so-called large bifurcation support. Given a family of vector fields on $S^2$ that unfolds a field $v_0$, the respective large bifurcation support is a closed $v_0$-invariant subset of the sphere indicating parts of the phase portrait of $v_0$ affected by bifurcations. One should consider disconnected large bifurcation supports in order to obtain Cartesian products for sure. We prove that, if the large bifurcation support is disconnected and the restriction of the original family to some neighborhood of each connected component is structurally stable (plus some mild extra conditions), then the original family is a~Cartesian product of the bifurcations that occur near the components of the large bifurcation support. We also show that the structural stability requirement cannot be omitted.
\end{abstract}

\keywords{Planar vector fields \and Bifurcations \and Cartesian products}

\section{Introduction}
\label{sec:introduction}

This paper studies a special class of bifurcations on the two-sphere, so-called Cartesian products. Roughly speaking, a family is a Cartesian product of bifurcations if there are two parts of the phase space, one bifurcation occurs in the first part, another one in the second, and those bifurcations are in a sense independent. We give a sufficient condition for a family to be (equivalent to) a Cartesian product. It is stated in terms of large bifurcation supports and the extension triviality property.

In this introduction we give the heuristic version of the definitions and results (we hope they are easy to read) so that the reader will have an idea of what the paper is about. After that we present the rigorous version of these statements, which is more lengthy.

Large bifurcation support of a family is a subset of the phase space responsible for bifurcations that occur in the family. It was introduced in \citep{Goncharuk-LBS} by Goncharuk and Ilyashenko for glocal families

\begin{definition}[Glocal families]\label{gloc}
    Denote $\Vect^{*}_k\left( M \right)$ the subset of $\Vect^{k}\left( M \right)$, the Banach space of $C^k$-smooth vector fields on $M$, such that each vector field $v \in \Vect^{*}_k\left( M \right)$ has only a finite number of limit cycles and singular points counted with multiplicity\footnote{Multiplicity of a cycle (singular point) is the maximal number (if exists) of cycles (singular points) that a $C^k$-close fields has in an arbitrary small neighbourhood of the cycle (singular point). In particular, each $v \in \Vect^{*}_k\left( M \right)$ does not have zero $k$-jet at any point on $S^2$.}.

    A $k$-smooth $n$-parameter family ($n \in \mathbb{N}$) of vector fields on a manifold $M$ is a $C^k$-smooth map $B \to \Vect^{k}\left( M \right)$, where $B \subset \mathbb{R}^n$ is a domain. Given a~$k$-smooth family $V$ of vector fields on a manifold $M$ we call $M$ the~phase space of $V$, and $U$ -- the base of $V$. The total space of the family is a Cartesian product of the phase space and the base.

    A glocal (resp. $M$-glocal)\footnote{\textbf{glo}bal phase space + \textbf{local} base of parameters} $n$-parameter family ($n \in \mathbb{N}$) of vector fields is a~germ\footnote{\defgerm} of a~$C^k$-smooth map $\left(\mathbb{R}^n,\ 0\right) \to \Vect^{*}_k\left(S^2\right)$ (resp. $\left(\mathbb{R}^n,\ 0\right) \to \Vect^{*}_k\left(M\right)$)\footnote{It follows that the corresponding extended system of ordinary differential equations $\dot{x} = v_\varepsilon (x)$, $\dot{\varepsilon} = 0$ has $C^k$-smooth right hand side.}.
\end{definition}

\begin{remark}
    The base of a glocal or $M$-glocal $n$-parameter family is the germ $\left(\mathbb{R}^n,\ 0\right)$, while the phase space is $S^2$ or $M$ respectively. A representative of the base is any open neighborhood of the origin in $\mathbb{R}^n$. 
    
    A glocal ($M$-glocal) family of vector fields is a class of equivalence. A~representative of a glocal ($M$-glocal) $n$-parameter family of vector fields is a~$k$-smooth $n$-parameter family of vector fields on $S^2$ ($M$).
    
    When it comes to calculations or any other actions one needs to perform with a glocal or $M$-glocal family, one has to choose a representative. We want to save the reader from a dozen stereotypical phrases, so we will overload the~notation a little: we will use the same capital Latin letters for both the~glocal ($M$-glocal) family and its representative. We hope this does not lead to any significant confusion.
\end{remark}

\begin{remark}
    In what follows, the smoothness degree $k$ of families considered is at~least~$2$. 
\end{remark}

We will use many times a notion of the large bifurcation support ($LBS$) of a glocal family. Let us first present a heuristic definition of the large bifurcation support. The formal one will be given later (see Def.~\ref{def:cLBS}).

\begin{hdefinition}\label{def:heurLBS}
    Given a glocal family $V = \{v_\varepsilon\}$, define a set $Z = \LBS(V) \subset S^2$ as one characterized by the following properties:

    \begin{itemize}
        \item $Z$ is closed and $v_0$-invariant;
        \item if two glocal families $W = \{w_\varepsilon\}$ and $V$ are "equivalent" in some neighborhoods of their large bifurcation supports, $w_0$ is orbitally topologically equivalent to $v_0$, and both equivalences in a sense agree, then $V$ and $W$ are "equivalent" on the whole $S^2$.
    \end{itemize}
\end{hdefinition}

The term "equivalent" is put in quotation marks, because there are several ways to define equivalence of glocal families: there are weak, strong, sing, moderate equivalences and so on. The formal definition of large bifurcation supports demands to specify the kind of equivalence used.

Another key concept of this paper is \textit{the extension triviality property}. It is close to the notion of structural stability. Recall that a glocal family is structurally stable provided that it is equivalent to any nearby family (in~the~$C^k$-topology).

\begin{definition}
   A family $V = \{v_\varepsilon\}$ has the extension triviality property if any family $W = \{w_{(\varepsilon,\ \delta)}\}$ having $V$ as a subfamily: $w_{(\varepsilon,\ 0)} = v_\varepsilon$, is equivalent to a family $\widetilde{W} = \{\widetilde{w}_{(\varepsilon,\ \delta)}\}$, where $\widetilde{w}_{(\varepsilon,\ \delta)} = v_\varepsilon$ (i.e. vector fields in $\widetilde{W}$ do not depend on $\delta$). The latter family is called thereby a trivial extension of $V$. 
\end{definition}

Our main result may be heuristically stated as follows.

\begin{htheorem}
    Let a glocal family $V$ have a disconnected large bifurcation support $Z = Z_1 \sqcup Z_2$. Suppose that restrictions of $V$ to some neighborhoods $U_1 \supset Z_1$ and $U_2 \supset Z_2$ have the extension triviality property. Then $V$ is equivalent to a Cartesian product of bifurcations.
\end{htheorem}

There are several upcoming results concerning structural stability of one-parameter families (\citep{Androsov}) and properties of generic two-parameter families (e.g.~\citep{Filimonov-24}) that align with this theorem to constitute a rigorous proof of the~following hypothesis:

\begin{conjecture}\label{th2}
    A generic two-parameter family with a disconnected large bifurcation support is equivalent to a Cartesian product of bifurcations.
\end{conjecture}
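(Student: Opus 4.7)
The plan is to invoke the heuristic main theorem, suitably formalized, and reduce the conjecture to verifying the extension triviality property for the two factor restrictions of $V$. Given a generic two-parameter family $V$ with disconnected large bifurcation support $Z = Z_1 \sqcup Z_2$, I would first use transversality to classify the possible pairs of codimensions $(d_1, d_2)$ of the bifurcation loci in parameter space carried by $Z_1$ and $Z_2$. Because $V$ has only two parameters and the two components bifurcate independently, genericity forces $d_1 + d_2 \leq 2$, so that the restriction of $V$ to a neighborhood of each $Z_i$ behaves, up to reparametrization, as a $d_i$-parameter family embedded inside a $(2-d_i)$-parameter extension.

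The second step is to verify the extension triviality property at each component. In the codimension-$0$ case the local restriction is structurally stable in the parameter-free sense, and extension triviality is immediate. In the codimension-$1$ case I would invoke the forthcoming results of Androsov \citep{Androsov} on structural stability of generic one-parameter local families: once a one-parameter family is known to be structurally stable within its parameter class, extension triviality follows by taking, for each value of the auxiliary parameter $\delta$, the equivalence with the $\delta=0$ slice and gluing these equivalences smoothly using uniqueness-up-to-isotopy of topological orbital equivalences. The codimension-$2$ case is handled analogously, drawing on Filimonov \citep{Filimonov-24} and related classifications of generic two-parameter bifurcations supported at a single component of the LBS.

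Once extension triviality is established for both local families $V\big|_{U_1}$ and $V\big|_{U_2}$, the rigorous version of the heuristic main theorem applies and yields equivalence of $V$ to a Cartesian product of the bifurcations occurring near $Z_1$ and $Z_2$. The hardest case, and the technical heart of the argument, is the codimension-$(1,1)$ split, in which both components carry nontrivial one-parameter bifurcations. There the main obstacle will be to ensure that the local equivalences produced near $Z_1$ and $Z_2$ can be chosen compatibly so that, after extending by the identity on the complementary structurally stable region, they glue into a single global equivalence between $V$ and the Cartesian product. This demands simultaneous control of both equivalences along the structurally stable overlap, and is where the bulk of the analytic and topological work must be concentrated.
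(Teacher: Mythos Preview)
Your overall strategy matches the paper's: reduce to the Main theorem by verifying the requisite extension triviality on each component, invoking \citep{Androsov} for the one-parameter factors. Two corrections, though. First, your case split is largely vacuous: Proposition~\ref{prop:9} shows that every connected component of the $\LBS$ contains a non-Andronov element, hence carries a degeneracy of codimension at least one. With two components and only two parameters, genericity forces $(d_1,d_2)=(1,1)$; there is no $(0,2)$ or $(2,0)$ case with a genuinely disconnected $\LBS$. This is exactly the content of Conjecture~\ref{th:ABFI} (and its heuristic proof), which the paper identifies as the substantive reduction step.

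Second, you have misplaced the technical weight. The gluing you worry about---extending the two local equivalences across the structurally stable complement---is precisely what Theorem~\ref{LBS-main-theorem} and the Splitting lemma~\ref{splitting-of-two} do for you: once you have moderate equivalences on disjoint neighborhoods $U_1$, $U_2$ that are the identity at $\varepsilon=0$, the $\LBS$ machinery upgrades this to a weak equivalence on all of $S^2$ automatically. No gluing by hand is needed. What you are missing instead is that the Main theorem~\ref{main} requires not just the ET property but the \emph{strong} ET property (the base homeomorphism must preserve the auxiliary parameter), and also the TD condition (the subfamily $\{\varepsilon_j=0\}$ must be topologically distinguished on $U_j$). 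The SET requirement is essential in the proof of the Splitting lemma to ensure the two base homeomorphisms compose correctly, and TD is needed to pin down that $h$ fixes the coordinate hyperplanes. Your proposal does not address either, and these---together with the rigorous version of Conjecture~\ref{th:ABFI}---are where the actual work lies.
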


In the same manner we hope to prove the next two conjectures.

\begin{conjecture}
    Generic $n$-parameter family with an $\LBS$ having $n$ connected components is equivalent to a Cartesian product of $n$ one-parameter bifurcations.
\end{conjecture}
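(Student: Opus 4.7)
The plan is to proceed by induction on $n$, using the heuristic main theorem as the engine that peels one factor off at a time, together with a generic-position hypothesis that matches one parameter to each connected component of the $\LBS$.

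First I would formalize the genericity assumption so that in an open dense subset of the space of $n$-parameter glocal families whose $\LBS$ has $n$ components $Z_1,\dots,Z_n$, each component $Z_i$ is the locus of a codimension-one bifurcation of $v_0$ (elementary polycycle, semistable cycle, saddle connection, etc., in the spirit of the Kotova--Stanzo zoo). Under such a condition, the family localized to a neighborhood $U_i$ of $Z_i$ should be equivalent, after a smooth reparametrization of the base, to a one-parameter family $V_i=\{v_{\varepsilon_i}\}$ depending only on a single combination $\varepsilon_i=\ell_i(\varepsilon)$ of the parameters, together with a trivial extension in the remaining $n{-}1$ directions; generic independence of the codimension-one strata at $Z_1,\dots,Z_n$ ensures that $\ell_1,\dots,\ell_n$ are functionally independent near $0\in\mathbb{R}^n$, and so they jointly give a local diffeomorphism $(\mathbb{R}^n,0)\to(\mathbb{R}^n,0)$.

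Next I would perform the induction. The base case $n=2$ is exactly the (rigorous form of the) heuristic main theorem. For the inductive step, fix a family $V$ with $\LBS(V)=Z_1\sqcup\cdots\sqcup Z_n$, let $U_1\supset Z_1$ and $U_2\supset Z_2\sqcup\cdots\sqcup Z_n$ be disjoint neighborhoods, and verify the hypotheses of the main theorem for the partition $Z=Z_1\sqcup(Z_2\sqcup\cdots\sqcup Z_n)$. The extension triviality of $V|_{U_1}$ follows from Androsov's forthcoming structural stability theorem for generic one-parameter bifurcations, applied to the reduction described in the previous paragraph: once $V|_{U_1}$ is equivalent to a genuine one-parameter family $V_1(\varepsilon_1)$ independent of $\varepsilon_2,\dots,\varepsilon_n$, any extension of $V_1$ by extra parameters is trivial in those extra parameters. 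The analogous statement for $V|_{U_2}$ follows by the inductive hypothesis applied to $V|_{U_2}$, whose $\LBS$ has $n-1$ components and which sits naturally inside an $(n-1)$-parameter family after fixing $\varepsilon_1=0$; a short argument is needed to check that extension triviality is preserved when one then re-enlarges the parameter space back to $\mathbb{R}^n$. Applying the main theorem gives $V\simeq V_1\times V'$ with $V'$ an $(n-1)$-parameter family whose $\LBS$ is $Z_2\sqcup\cdots\sqcup Z_n$; a second application of the inductive hypothesis to $V'$ finishes the step.

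The main obstacle, and where I expect the real work to be, is the verification of extension triviality for $V|_{U_1}$ and, more delicately, for $V|_{U_2}$. For $U_1$ one needs a precise genericity theorem saying that near a codimension-one $\LBS$ component a generic $n$-parameter unfolding factors smoothly into a one-parameter bifurcation times a trivial extension, at the level of whichever notion of equivalence (weak, strong, sing, or moderate) the $\LBS$ is defined with; ensuring that the equivalence built this way agrees with the orbital topological equivalence of $v_0$ and $w_0$ required in the heuristic definition of $\LBS$ is where most of the bookkeeping lives. For $U_2$ one must show that extension triviality is stable under enlarging the base, which is plausible but not formal; a clean way to handle it is to prove once and for all a lemma stating that a Cartesian product $V_1\times\cdots\times V_k$ of one-parameter families with the extension triviality property itself has the extension triviality property, and to embed the inductive hypothesis inside this lemma. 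Once this lemma and Androsov's theorem are in hand, the induction proceeds mechanically.
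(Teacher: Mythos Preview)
The statement you are attempting is listed in the paper as a \emph{conjecture}, not a theorem: the authors write ``In the same manner we hope to prove the next two conjectures'' and give no proof. There is therefore no proof in the paper to compare your attempt against.

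That said, your inductive plan is consonant with what the paper suggests. The authors remark, immediately after the Splitting Lemma, that it ``admits a straightforward generalization to any finite number of parameters and neighborhoods'' and that the two-factor proof ``shows both the base and the induction step quite comprehensively.'' So the skeleton of your argument---peel off one component at a time via the main theorem---matches their intended route.

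Where your proposal goes beyond what the paper establishes is exactly where you yourself flag the difficulty. First, you invoke the heuristic main theorem (requiring only extension triviality), whereas the paper's rigorous Main Theorem demands both the SET condition and the topologically-distinguished (TD) condition; your outline never addresses TD. Second, your inductive step needs SET for the restriction $V|_{U_2}$ covering $n-1$ components, but the inductive hypothesis only says that the $(n-1)$-parameter subfamily is \emph{equivalent to} a Cartesian product, not that it has SET; the bridging lemma you propose (a product of families with SET itself has SET) is not in the paper and is, as you correctly note, where the real work lies. The paper's own route to the $n=2$ case of this circle of conjectures leans on forthcoming external results (Androsov's theorem on generic one-parameter families and a separate genericity analysis of two-parameter families); nothing analogous is supplied for general $n$, which is precisely why the statement remains a conjecture.
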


\begin{conjecture}
    Generic $3$-parameter family with a disconnected $\LBS$ is equivalent to a Cartesian product either of $3$ one-parameter bifurcations, or of $2$ bifurcations, one of them $1$-parameter and another one $2$-parameter.
\end{conjecture}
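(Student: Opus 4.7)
My plan is to apply the Heuristic Main Theorem iteratively to a generic $3$-parameter family $V$ with disconnected $\LBS$. The first observation is that, for a generic $n$-parameter family, each connected component of the $\LBS$ should carry its own nontrivial bifurcation and therefore consume at least one effective parameter; hence the $\LBS$ of a generic $3$-parameter family has at most three connected components. Together with the disconnectedness assumption, this leaves exactly two combinatorial possibilities, $\LBS(V) = Z_1 \sqcup Z_2$ or $\LBS(V) = Z_1 \sqcup Z_2 \sqcup Z_3$, matching the two alternatives in the statement.

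In the two-component case I would choose disjoint $v_0$-invariant neighborhoods $U_1 \supset Z_1$ and $U_2 \supset Z_2$, verify that each restricted subfamily $V|_{U_i}$ has the extension triviality property, and invoke the Heuristic Main Theorem directly. This produces an equivalence $V \sim V_1 \times V_2$; counting the parameters needed to unfold each factor forces one factor to be one-parameter and the other two-parameter. In the three-component case I would apply the theorem first to the partition $\LBS(V) = Z_1 \sqcup (Z_2 \cup Z_3)$, producing an equivalence $V \sim V_1 \times W$ in which $V_1$ is one-parameter and $W$ is a two-parameter family whose $\LBS$ is still disconnected (equal to $Z_2 \sqcup Z_3$). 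A second application, this time of the two-parameter version (Conjecture~\ref{th2}), to $W$ yields $W \sim V_2 \times V_3$ with each factor one-parameter; composing equivalences gives the desired triple product.

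The hard part is verifying the extension triviality hypothesis for each restricted subfamily. For a one-parameter factor I would appeal to Androsov's forthcoming classification of structurally stable one-parameter families \citep{Androsov}, which is expected to be generic and strong enough to imply extension triviality directly. For the two-parameter factor that appears in the first case, genuine structural stability cannot be demanded, because a generic two-parameter family may contain a codimension-two bifurcation; instead one has to show that every three-parameter extension of such a subfamily is equivalent to its trivial extension. This should be carried out using the generic normal forms and sparkling-type phenomena catalogued in \citep{Filimonov-24}.

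The bulk of the technical work, and the principal obstacle, will lie in this last step: one must rule out every way in which a third parameter could unfold a codimension-two degeneracy nontrivially while still leaving the other component of the $\LBS$ untouched. In particular, one has to confirm that the list of generic two-parameter bifurcations in \citep{Filimonov-24} is complete enough that any extra parameter can, after a base-preserving equivalence, be absorbed into a reparametrization of the base rather than producing new bifurcations. Once this is established for both the one- and two-parameter components, both cases of the conjecture reduce to mechanical application of the Heuristic Main Theorem as sketched above.
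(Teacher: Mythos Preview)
The paper contains no proof of this statement: it is explicitly a conjecture, and the authors say only that they ``hope to prove'' it by the same method as Conjecture~\ref{th2}, contingent on forthcoming results of Androsov and of Filimonov that would supply the needed hypotheses. There is therefore no proof in the paper to compare your proposal against.

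Your sketch is consistent with the paper's announced program: count components of the $\LBS$, split the base accordingly, verify the hypotheses of the Main Theorem for each factor, and iterate. Two refinements are worth flagging. First, Theorem~\ref{main} requires both the SET condition and the TD condition; your proposal treats only extension triviality, but the TD hypothesis (that the subfamily $\{v_\varepsilon \mid \varepsilon_j = 0\}$ is topologically distinguished on $U_j$) is an independent ingredient that must also come from the classification of generic low-codimension degeneracies. Second, in your three-component argument, after the first split $V \sim V_1 \times W$ the intermediate object $W$ is a constructed split family, not a priori a generic two-parameter family, so invoking Conjecture~\ref{th2} for $W$ requires an additional argument (or a direct reapplication of Theorem~\ref{main} to $W$ with its own SET and TD checks). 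You do correctly identify the principal obstacle, namely establishing SET for a generic two-parameter subfamily, and the paper itself leaves that entirely to future work.
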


\begin{remark}
    We believe that an analogous statement is no longer true for generic $4$-parameter families. There is a serious obstacle due to presence of numerical invariants. We know that the topological type of some non-local bifurcations of codimension $3$ is determined by intrinsic numerical characteristics of the~degeneracy~\citep{Ilyashenko2018,Dukov2020,Goncharuk_2021}, which, in turn, may depend non-trivially on parameters in a family.
\end{remark}

\section{Definitions and results}
\label{sec:definitions}

Here we define all terms needed for the formal statement of our main theorem. The definitions use notions of weak and moderate equivalences of~glocal families that we recall in the next subsection.

Let us introduce a notation. Given a family $V = \{v_\varepsilon\}$ of vector fields on~$M$ and a subset $U$ of $M$, we denote $V|_U := \{v_\varepsilon|_U\}$ a family obtained from~$V$ by restricting each field in $V$ to $U$.

We use letters $B$, $\widetilde{B}$, $B_1$, $B_2$, etc. to denote either an open neighborhood of the origin in $\mathbb{R}^n$, homeomorphic to the unit disk, or a germ of subsets of real finite-dimensional Euclidean space at zero, i.e. $(\mathbb{R}^n,\ 0)$ for some $n \in \mathbb{N}$. Sometimes we use the same letter both for a germ and its representative. The~right interpretation is usually clear from the context.

\subsection{Moderate equivalence}
\label{subsec:moderate}

This paper is based on notions of the large bifurcation support and the~moderate equivalence introduced in \citep{Goncharuk-LBS}. Let us review first two classical ways to define equivalent families.

\begin{definition}[Weak and strong equivalences]\label{weak&strong}
    Two families of vector fields $V = \{v_\varepsilon \left|\ \varepsilon \in B \right.\}$ and $\widetilde{V} = \{ \left. \widetilde{v}_{\widetilde{\varepsilon}}\ \right| \ \widetilde{\varepsilon} \in \widetilde{B}\}$ on $M$ are called weakly equivalent if there is a map
    $$
        \mathbf{H} : B \times M \to \widetilde{B} \times M, \quad \mathbf{H}(\varepsilon,\ x) = \left(h(\varepsilon),\ H_\varepsilon (x)\right),
    $$
    such that $h : B \to \widetilde{B}$ is a homeomorphism between the bases, and for each $\varepsilon \in B$ $H_\varepsilon : M \to M$ is an orbital topological equivalence between $v_\varepsilon$ and $\widetilde{v}_{h(\varepsilon)}$.

    If, in addition, $\mathbf{H}$ is a homeomorphism itself, then $V$ and $\widetilde{V}$ are called strongly equivalent.
\end{definition}

Let $V = \{v_\varepsilon \left|\ \varepsilon \in B \right.\}$ be a family of vector fields on $S^2$. Denote by $\Acc{(V)}$ the~set
\begin{equation}\label{eqn:acc}
    \Acc{(V)} = (\overline{\operatorname{Per} V} \cup \overline{\operatorname{Sep} V}) \cap\{\varepsilon=0\}
\end{equation}
where $\operatorname{Per} V$ and $\operatorname{Sep} V$ live in the product $B \times S^2$ and consist of all points belonging to periodic trajectories or separatrices\footnote{\defsep} of $V$ respectively. Let
$$
\begin{aligned}
    C_\partial(V) & := \operatorname{S} (v_{0}) \cup \partial \Acc{(V)},
\end{aligned}
$$
where $\operatorname{S} (v_{0}) := \operatorname{Sing} (v_0) \cup \operatorname{Per} (v_0) \cup \operatorname{Sep} (v_0)$\footnote{We denote by $\operatorname{Sing}(v_0)$ the set of all singular points of $v_0$; $\operatorname{Per} (v_0)$ and $\operatorname{Sep} (v_0)$ are the unions of all limit cycles and separatrices of $v_0$ respectively.}. This union is called the~separatrix skeleton of $v_0$.

\begin{definition}[Moderate equivalence]\label{moderate}
    Two families of vector fields on $S^2$, $V = \{v_\varepsilon \left|\ \varepsilon \in B \right.\}$ and $\widetilde{V} = \{ \left. \widetilde{v}_{\widetilde{\varepsilon}}\ \right| \ \widetilde{\varepsilon} \in \widetilde{B}\}$, are moderately equivalent in some neighborhoods of the closed sets $Z_{1},\ Z_{2} \subset S^2$ if
    \begin{enumerate}
        \item $Z_{1}$ is $v_{0}$-invariant, and $Z_{2}$ is $\widetilde{v}_{0}$-invariant;
        \item there exists a neighborhood $U \supset Z_{1}$ and a map
        $$
        \begin{aligned}
            \mathbf{H} : B \times U & \rightarrow \widetilde{B} \times S^{2} \\
            (\varepsilon,\ x) & \mapsto \left(h(\varepsilon),\ H_{\varepsilon}(x)\right)
        \end{aligned}
        $$
        such that
        \begin{itemize}
            \item[i.] $h$ is a homeomorphism between the bases;
            \item[ii.] $\forall \varepsilon \in B$ the map $H_{\varepsilon}: U \rightarrow S^{2}$ is an orbital topological equivalence between $\left.\left(v_{\varepsilon}\right)\right|_{U}$ and $\left.\left(\widetilde{v}_{h(\varepsilon)}\right)\right|_{H_{\varepsilon}(U)}$;
        \end{itemize}
        \item the map $\mathbf{H}$ is continuous with respect to $(\varepsilon,\ x)$ on the intersection of its domain with $C_\partial(V)$; the map $\mathbf{H}^{-1}$ is continuous with respect to $(\widetilde{\varepsilon},\ x)$ on the intersection of its domain with $C_\partial(\widetilde{V})$;
        \item $H_{0}\left(Z_{1}\right)=Z_{2}$, and for each neighborhood $G$ of $\{\varepsilon=0\} \times Z_{1}$, its image $\mathbf{H}(G)$ contains some neighborhood of $\{\widetilde{\varepsilon}=0\} \times Z_{2}$, and the same holds for the inverse map $\mathbf{H}^{-1}$.
    \end{enumerate}
\end{definition}

\begin{definition}
    We say that two glocal ($M$-glocal) families are weakly/strongly/moderately equivalent, if they have weakly/strongly/moderately equivalent representatives.
\end{definition}

Discussion of various types of~equivalences between glocal families may be found in \citep{Goncharuck}. The main idea is that the strong equivalence is too strong: it distinguishes the families that are equivalent from the intuitive point of view, while the weak equivalence is too weak: it identifies the families that are different from the intuitive point of view. The moderate equivalence is more adequate, but also more sophisticated.

\subsection{LBS}
\label{subsec:lbs}

We can give a precise version of the heuristic Definition \ref{def:heurLBS} now.

\begin{definition}[LBS]\label{def:axLBS}
    Given a glocal family $V = \{v_\varepsilon \left|\ \varepsilon \in B \right.\}$, define a set $Z = \LBS(V) \subset S^2$ as one characterized by the following properties:
    \begin{itemize}
        \item $Z$ is closed and $v_0$-invariant;
        \item if a glocal family $W = \{w_\varepsilon \left|\ \varepsilon \in B \right.\}$ and $V$ are moderately equivalent in some neighborhoods of their large bifurcation supports, $w_0$ is orbitally topologically equivalent to $v_0$, and the moderate equivalence agrees with the topological equivalence for $\varepsilon = 0$, then $V$ and $W$ are weakly equivalent on the whole sphere.
    \end{itemize}
\end{definition}

This is a so-called axiomatic definition. Of course, the whole phase space, for example, satisfies this definition. But interesting are smaller sets. We~give a constructive definition of $\LBS$ now.

\begin{definition}[Interesting limit sets]\label{def:int}
    A nest of limit cycles of a vector field is the maximal set of nested cycles with no singular points in between them.

    A limit cycle is interesting if its nest contains only cycles of even multiplicity, and both domains on the sphere bounded by this cycle contain at least one singular point different from a hyperbolic attractor or repeller.

    An $\alpha$- or $\omega$-limit set of a point $x$ is called interesting if it contains a~singular point different from a hyperbolic attractor or repeller, or coincides with an interesting limit cycle.
\end{definition}
    
\begin{definition}[ELBS]\label{def:elbs}
    Extra large bifurcation support of a vector field $v_0$ on $S^2$ is the union of all non-hyperbolic singular points and limit cycles of this field, plus the closure of the set of all points for which both $\alpha$- and $\omega$-limit sets are interesting. It is denoted $\ELBS (v_0)$ and called the extra large bifurcation support of $v_0$.
\end{definition}

It is clear that $\ELBS (v_0)$ is $v_0$-invariant.

\begin{definition}[LBS]\label{def:cLBS}
    \begin{equation}\label{eqn:lbs}
        \begin{aligned}
            \LBS (V) &= \ELBS (v_0) \cap (\operatorname{Sing} (v_{0}) \cup \operatorname{Acc}{(V)}),
        \end{aligned}
    \end{equation}
    see~\ref{eqn:acc} for the definition of $\operatorname{Acc}$.
\end{definition}

Actually, we have described a map from glocal families to subsets~of~$S^2$. For each glocal family it yields some relatively small set that meets all the~requirements listed in the axiomatic definition.

\begin{theorem}[\citep{Goncharuk-LBS}]\label{LBS-main-theorem}
    The $\LBS$ thus defined satisfies the axiomatic Definition \ref{def:axLBS}.
\end{theorem}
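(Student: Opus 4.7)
The plan is to verify the two axiomatic properties in turn for the set $Z := \LBS(V)$ given by~(\ref{eqn:lbs}). Closedness and $v_0$-invariance of $Z$ follow straight from the constructive definition: $\ELBS(v_0)$ is a finite set of non-hyperbolic singular points and cycles together with a closure of a $v_0$-invariant set, and $\operatorname{Sing}(v_0) \cup \Acc(V)$ is closed because $\Acc(V)$ is itself a closure, and $v_0$-invariant because separatrices and periodic orbits are $v_0$-invariant. The nontrivial content is the second axiom: given a glocal family $W$ that is moderately equivalent to $V$ via $\mathbf{H} = (h,\ \{H_\varepsilon\})$ in neighborhoods $U \supset \LBS(V)$ and $\widetilde{U} \supset \LBS(W)$, and given a global orbital topological equivalence $H_0^{\mathrm{glob}} : S^2 \to S^2$ between $v_0$ and $w_0$ that agrees with $H_0$ on $U$, one must construct a weak equivalence defined on the whole sphere.

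My approach is to keep $\mathbf{H}$ on a neighborhood $U_0$ of $\LBS(V)$ with $U_0 \Subset U$, and to construct the extension on $S^2 \setminus U_0$. The key structural fact is that $S^2 \setminus \LBS(V)$ carries no interesting $v_0$-dynamics: every point outside $\ELBS(v_0)$ is drained to a hyperbolic attractor or pumped from a hyperbolic repeller, and every accumulation of separatrices or periodic orbits of $V$ at $\varepsilon = 0$ is captured inside $\LBS(V)$ by construction. Consequently the maximal $v_0$-invariant compact subset of $S^2 \setminus U_0$ decomposes into finitely many canonical regions $R_1, \dots, R_m$, each bounded by arcs of $\partial U_0$ and pieces of separatrices, on each of which $v_0$ is orbitally equivalent to a parallel flow on a strip. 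Persistence of hyperbolic singular points and of transverse sections ensures the same decomposition holds for $v_\varepsilon$ and $w_{h(\varepsilon)}$ for $\varepsilon$ small enough.

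On each canonical region $R_j$ the extension is built by flow transport. Pick a transversal $\Sigma_j$ to $v_0$ connecting two boundary components of $R_j$; its image $H_0^{\mathrm{glob}}(\Sigma_j)$ is transverse to $w_0$, and by persistence both remain transverse to $v_\varepsilon$ and $w_{h(\varepsilon)}$ for small $\varepsilon$. Define $\widetilde{H}_\varepsilon$ on $R_j$ by prescribing its values on $\Sigma_j$ (using $H_0^{\mathrm{glob}}$, adjusted to match the boundary value of $H_\varepsilon$ where $\Sigma_j$ meets $\partial U_0$) and transporting along the flows: the point reached from $\sigma \in \Sigma_j$ by $v_\varepsilon$ in time $t$ is sent to the point reached from $\widetilde{H}_\varepsilon(\sigma)$ by $w_{h(\varepsilon)}$ in the same time. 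This yields an orbital topological equivalence on $R_j$ for each $\varepsilon$, matching $\mathbf{H}$ on $R_j \cap \partial U_0$ and reducing to $H_0^{\mathrm{glob}}$ at $\varepsilon = 0$.

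The main obstacle is the gluing: one must check that the two constructions fit together continuously where they meet along $\partial U_0$ and along separatrices of hyperbolic saddles on the boundary of the $R_j$. All such meeting sets lie inside $C_\partial(V)$, where $\mathbf{H}$ is guaranteed to be continuous by condition~(3) of Definition~\ref{moderate}, and this is exactly what the flow-transport construction needs in order to yield a fiberwise homeomorphism (recall that weak equivalence demands continuity of $h$ and of each slice $\widetilde{H}_\varepsilon$, not joint continuity on $B \times S^2$). The most delicate local step is verifying that $\widetilde{H}_\varepsilon$ remains a homeomorphism across the corner points where a chosen transversal abuts a hyperbolic saddle on $\partial R_j$, which is handled by a standard flow-box argument in a Hartman--Grobman chart about the saddle.
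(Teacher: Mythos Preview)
The paper does not prove Theorem~\ref{LBS-main-theorem} at all: it is quoted from \cite{Goncharuk-LBS} and immediately followed by the remark ``This is a difficult result with a long proof.'' So there is no in-paper argument to compare your proposal against; the authors treat this theorem as a black box imported from the literature.

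As for the proposal itself, it is a plausible outline but not a proof, and several of its structural claims are false as stated. The assertion that the maximal $v_0$-invariant set in $S^2 \setminus U_0$ decomposes into finitely many regions on which $v_0$ is ``orbitally equivalent to a parallel flow on a strip'' is incorrect: the complement of $\LBS(V)$ typically contains hyperbolic attractors and repellers, hyperbolic saddles whose separatrices fail the ``both limit sets interesting'' test, and hyperbolic limit cycles (including entire nests that are non-interesting in the sense of Definition~\ref{def:int}). None of these are parallel-flow regions, and each requires its own local model for the extension. Relatedly, transporting ``in the same time'' along flows does not produce an orbital topological equivalence; one needs a monotone time reparametrisation, and near a hyperbolic sink or source the time-to-hit a transversal diverges, so the same-time recipe does not even define a map there.

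The gluing step is also where the real work in \cite{Goncharuk-LBS} lies, and your treatment of it is too light. You invoke continuity of $\mathbf{H}$ on $C_\partial(V)$, but you have not argued why the flow-transport extension on a canonical region actually matches $H_\varepsilon$ along $\partial U_0$: the transversal $\Sigma_j$ meets $\partial U_0$ at isolated points, and matching there does not force matching along the rest of $\partial U_0$ unless you also control how the $w_{h(\varepsilon)}$-orbits of those endpoints relate to $H_\varepsilon(\partial U_0)$. Finally, separatrices of $v_\varepsilon$ and limit cycles of $v_\varepsilon$ can accumulate (as $\varepsilon \to 0$) on sets outside $\ELBS(v_0)$, hence outside $\LBS(V)$; handling this accumulation in the extension is exactly why the $\Acc$ set and the ``interesting/non-interesting'' dichotomy enter the constructive definition, and your sketch does not engage with that at all.
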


This is a difficult result with a long proof. It allows us to determine explicitly the $\LBS$ in many cases and simplifies the study of unfoldings of~degenerated vector fields.

\subsection{Examples}
\label{subsec:examples}

\begin{figure}[ht]
\centering
\includegraphics[width=0.5\textwidth]{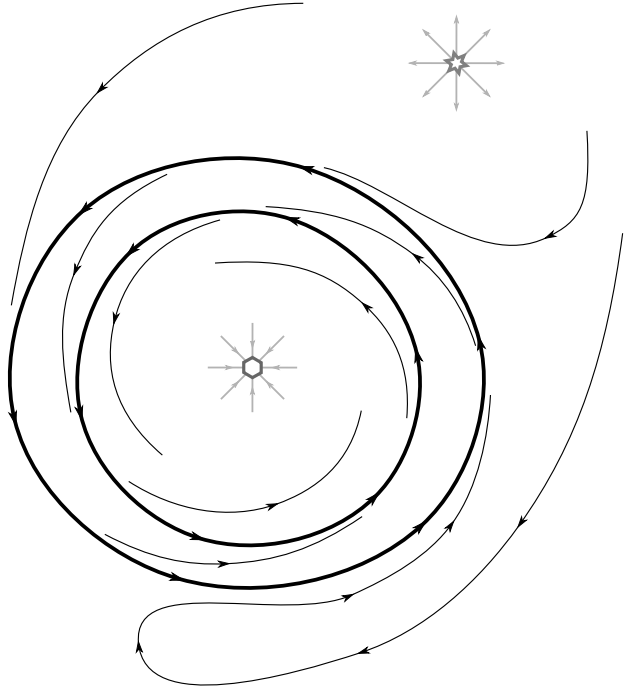}
\caption{Two parabolic cycles (thick closed orbits). There are also a~hyperbolic repeller (a star) and a hyperbolic attractor (a hexagon). The field has no other closed orbits and singular points. The $\LBS$ of any family unfolding this field consists of two thick cycles. Thinner arrows correspond to usual orbits and demonstrate that cycles are semi-stable.}\label{f1}
\end{figure}

\begin{example}\label{ex1}
    Consider a vector field shown on Fig.~\ref{f1}. The large bifurcation support of an arbitrary unfolding of this field consists of two non-hyperbolic cycles. It is disconnected. We discuss its relation with Cartesian products of bifurcations below.
\end{example}

\begin{figure}[ht]
\centering
\includegraphics[width=0.5\textwidth]{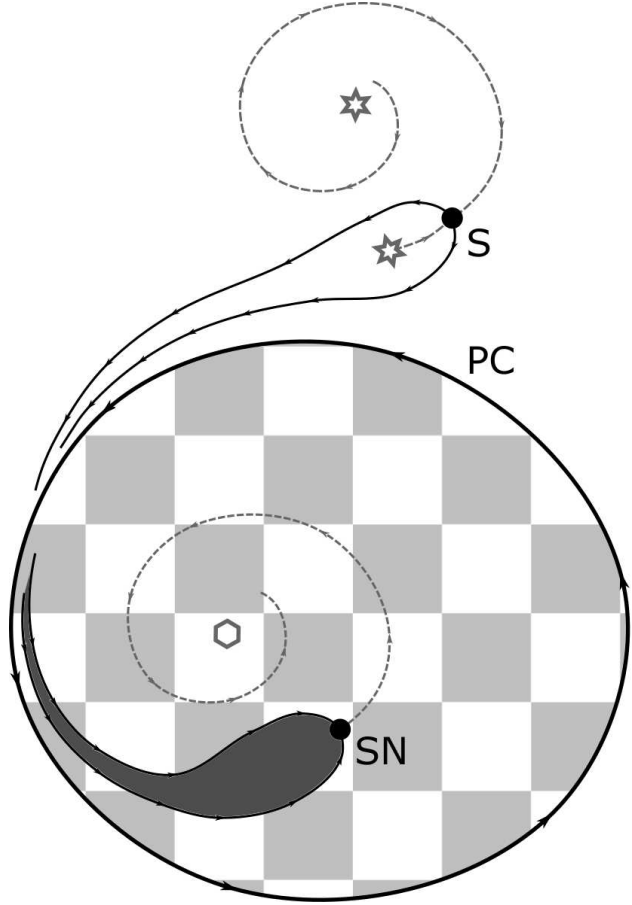}
\caption{A parabolic cycle (PC) and a saddle-node (SN). There are also two hyperbolic repellers (stars), a hyperbolic attractor (a hexagon) and a~hyperbolic saddle (S) with separatrices. The field has no other closed orbits and singular points. A generic unfolding $V$ of this field has the $\LBS$ that consists of the saddle S with two unstable separatrices, the~parabolic cycle PC and the saddle-node SN with the parabolic sector (colored with dark gray), while the $\Acc{(V)}$ contains also the whole interior domain of the parabolic cycle PC (shown by the chess filling), two stable separatrices of the saddle S and two hyperbolic repellers (stars).}\label{f2}
\end{figure}

\begin{example}\label{ex2}
See Fig.~\ref{f2}. The generic unfolding of this vector field is studied in \citep{snsn}. This example shows that the~$\LBS$ may contain an open subset of the~phase space, as well as that it may not contain the whole $\Acc$ set of~a~family.
\end{example}

A thorough analysis of those examples is not presented here for lack of space. It is straightforward and relies only on the definitions introduced in the previous subsection.

\subsection{Cartesian products of bifurcations}
\label{subsec:cartesian}

Cartesian product of bifurcations is different from the Cartesian product of families. The first one corresponds to a Cartesian product of the bases but to one and the same phase space. The second one is a family whose total space is a product of the total spaces of the factors.

First, we need the notion of a split family.

\begin{definition}[Split family]\label{def:split}
    A splitting data $(V,\ B_1,\ B_2,\ \varphi_1,\ \varphi_2)$ is a~glocal family $V = \{v_\varepsilon \left|\ \varepsilon \in B \right.\}$ with the base $B = B_1 \times B_2$ and two smooth cut functions $\varphi_1,\ \varphi_2 : S^2 \to \left[0,\ 1\right]$ with disjoint supports $\supp \varphi_j =: U_j$, $U_1 \cap U_2 = \varnothing$.
    
    Let $\varepsilon_j$ be a chart on $B_j$, $\varepsilon := (\varepsilon_1,\ \varepsilon_2)$. Denote by $p_j : B \to B$ a projection on $B_j$ (viewed as a subspace of $B$) along the other factor. The split family (or splitting, for brevity) $W = \{ w_\varepsilon \left|\ \varepsilon \in B \right.\}$ corresponding to the splitting data $(V,\ B_1,\ B_2,\ \varphi_1,\ \varphi_2)$ is given by the formula:
    $$
        w_\varepsilon = v_0 + (v_{p_1(\varepsilon)} - v_0)\ \varphi_1 + (v_{p_2(\varepsilon)} - v_0)\ \varphi_2.
    $$
\end{definition}

\begin{remark}
    Note that $w_\varepsilon = v_0$ on $S^2 \setminus (U_1 \cup U_2)$, while $w_\varepsilon = v_{p_j(\varepsilon)}$ on $\varphi^{-1}_j (1)$: the phase portrait of a vector field $w_\varepsilon$ of a split family coincides with the~phase portrait of $v_{p_1(\varepsilon)}$ in $\varphi^{-1}_1 (1)$, and of $v_{p_2(\varepsilon)}$ in $\varphi^{-1}_2 (1)$. 
\end{remark}

Split families play the role of standard Cartesian products.

\begin{definition}[Cartesian product of bifurcations]\label{def:cart}
    A glocal family $V$ with the base $B = B_1 \times B_2$ is called a Cartesian product of bifurcations in two disjoint open subsets $U_1$ and $U_2$ of $S^2$, if there exist two cut functions $\varphi_1$, $\varphi_2$ such that $\supp \varphi_j \subset U_j$ and $V$ is weakly equivalent (on $S^2$) to a split family corresponding to the splitting data $(V,\ B_1,\ B_2,\ \varphi_1,\ \varphi_2)$.
\end{definition}

\subsection{Extension triviality property}
\label{subsec:etp}

\begin{definition}
    A glocal extension of a $U$-glocal family $V_U = \{v_{\varepsilon_0} \left|\ \varepsilon_0 \in B_0 \right.\}$ is a glocal family $W = \{w_{(\varepsilon_0,\ \varepsilon_1)} \left|(\varepsilon_0,\ \varepsilon_1) \in B_0 \times B_1\right.\}$ such that $w_{(\varepsilon_0,\ 0)} = v_{\varepsilon_0}$ on $U$. A glocal extension $W$ is trivial, if a stronger condition is satisfied: $w_{(\varepsilon_0,\ \varepsilon_1)} = v_{\varepsilon_0}$ on $U$.
\end{definition}

\begin{definition}[Extension triviality]
    Let $U \subset S^2$ be an open domain. A $U$-glocal family $V_U$ with the total space $B_0 \times U$ has the extension triviality (ET) property if any glocal extension $W = \{w_{(\varepsilon_0,\ \varepsilon_1)} \left|(\varepsilon_0,\ \varepsilon_1) \in B_0 \times B_1\right.\}$ of $V_U$ is moderate equivalent in some neighborhood $M$ of $\LBS(W) \cap U$ to a trivial glocal extension $\widetilde{W} = \{\widetilde{w}_{(\varepsilon_0,\ \varepsilon_1)} \left|(\varepsilon_0,\ \varepsilon_1) \in B_0 \times B_1\right.\}$ of $V_U$, where $\widetilde{w}_{(\varepsilon_0,\ \varepsilon_1)} = v_{\varepsilon_0}$ (the second parameter, $\varepsilon_1$, is auxiliary), and the moderate equivalence
    $$
        \mathbf{H} : B \times M \to B \times M, \quad \mathbf{H}(\varepsilon,\ x) = \left(h(\varepsilon),\ H_\varepsilon (x)\right), \quad B = B_0 \times B_1,
    $$
    satisfies: $H_0 = \mathrm{id}$.
    
    A family $V_U$ has the strong extension triviality (SET) property if in the~previous definition the homeomorphism of bases $h$ may be chosen to preserve the~auxiliary~parameter~$\varepsilon_1$: $h(\varepsilon_0,\ \varepsilon_1) = (\widetilde{h}(\varepsilon_0,\ \varepsilon_1),\ \varepsilon_1)$.
\end{definition}

\begin{example}\label{ex3}
    As will be shown in \citep{Androsov}, a generic $U$-glocal family $V_U$ ($U \subset S^2$ is an open domain) that unfolds a generic degeneracy of codimension $1$ has the~SET property.
\end{example}

\begin{conjecture}
    Generic glocal families with the ET property have the SET property.
\end{conjecture}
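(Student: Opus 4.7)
The plan is to start from an ET-equivalence $\mathbf{H}$ of a glocal extension $W$ with the trivial extension $\widetilde{W}$, having base homeomorphism $h = (h_0,\ h_1)$, and to modify it into a moderate equivalence whose base map preserves the auxiliary parameter $\varepsilon_1$. The central observation is that $\widetilde{W}$ enjoys a large reparametrization symmetry: for any homeomorphism $\phi: B_0 \times B_1 \to B_0 \times B_1$ of the form $\phi(\eta_0,\ \eta_1) = (\eta_0,\ \phi_1(\eta_0,\ \eta_1))$, one has $\widetilde{w}_{\phi(\eta)} = v_{\eta_0} = \widetilde{w}_\eta$. Consequently, replacing $\mathbf{H}$ by $\mathbf{H}'(\varepsilon,\ x) := (\phi(h(\varepsilon)),\ H_\varepsilon(x))$ produces another moderate equivalence of $W$ with $\widetilde{W}$ whose base homeomorphism is $\phi \circ h$ and whose phase component (and, in particular, the normalization $H_0 = \mathrm{id}$) is unchanged.

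Thus the task reduces to producing a first-coordinate-preserving homeomorphism $\phi$ such that $\phi \circ h$ preserves $\varepsilon_1$. A short computation shows that the choice $\phi := \Phi \circ h^{-1}$, where
\begin{equation*}
    \Phi : B \to B, \qquad \Phi(\varepsilon_0,\ \varepsilon_1) := (h_0(\varepsilon_0,\ \varepsilon_1),\ \varepsilon_1),
\end{equation*}
automatically preserves the first coordinate and yields $\phi \circ h = \Phi$, provided $\Phi$ is itself a homeomorphism. Invertibility of $\Phi$ amounts to the transversality condition that for each fixed $\varepsilon_1 = c$ the map $\varepsilon_0 \mapsto h_0(\varepsilon_0,\ c)$ is a homeomorphism of $B_0$.

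The decisive step is then to prove that genericity of $V_U$ forces this transversality for every ET-equivalence of every glocal extension. The heuristic is that, for a generic unfolding $V_U$, distinct values of $\varepsilon_0$ produce topologically inequivalent phase portraits, so the fiberwise restriction $H_{(\cdot,\ c)}$ realizes a faithful correspondence between the slice $W|_{\varepsilon_1 = c}$ and the trivial extension, forcing $h_0(\cdot,\ c)$ to be a homeomorphism of $B_0$. Combined with the continuity of $\mathbf{H}$ on $C_\partial(W)$ and the rigidity of the $\LBS$ provided by Theorem~\ref{LBS-main-theorem}, this should yield homeomorphicity of $\Phi$ and hence SET.

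The main obstacle is to formulate the genericity hypothesis so that it is stable under arbitrary glocal extensions and is strong enough to exclude pathological $h_0$. In particular, one must rule out folds or failures of surjectivity in $\varepsilon_0 \mapsto h_0(\varepsilon_0,\ c)$ at isolated bad values of $c$; since $h$ need only be a homeomorphism, the available tools are purely topological. A natural strategy is to argue via the bifurcation diagram of $V_U$: generically, the topological type of $v_{\varepsilon_0}$ varies along $B_0$ in a way that identifies $\varepsilon_0$ up to homeomorphism, and moderate equivalence should transport this identification to every slice $\varepsilon_1 = c$. Making this heuristic rigorous, in concert with the forthcoming structural-stability results for one-parameter families referenced in Example~\ref{ex3}, is expected to be the crucial technical step.
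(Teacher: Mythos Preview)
The statement you are attempting to prove is labeled a \emph{Conjecture} in the paper and is not accompanied by any proof; the paper neither sketches an argument nor claims the result. There is therefore no ``paper's own proof'' to compare your proposal against.

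As a standalone strategy, your opening observation is sound: the trivial extension $\widetilde{W}$ is invariant under any reparametrization of the auxiliary parameter, so post-composing the base map $h$ with a first-coordinate-preserving homeomorphism $\phi$ costs nothing on the phase side. Your reduction to the invertibility of $\Phi(\varepsilon_0,\varepsilon_1) = (h_0(\varepsilon_0,\varepsilon_1),\varepsilon_1)$ is correct algebra. However, the step you yourself flag as ``the main obstacle'' is the entire content of the conjecture: nothing in the ET hypothesis constrains the slices $\varepsilon_0 \mapsto h_0(\varepsilon_0,c)$ to be injective or surjective, and the heuristic that ``distinct $\varepsilon_0$ give topologically inequivalent phase portraits'' is precisely what fails on the bifurcation diagram, where entire strata of parameters yield equivalent fields. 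Your proposal does not supply a mechanism to handle this, nor does it explain how genericity of $V_U$ (a property of a one-parameter family) propagates to control an arbitrary base homeomorphism of an arbitrary glocal extension. What you have written is a reasonable reformulation of the problem together with an honest acknowledgment that the decisive step remains open --- which is consistent with the paper's decision to state this as a conjecture rather than a theorem.
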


Extension of a glocal family is an incorporation of extra parameters, of extra perturbations, into the family. A glocal family possesses the SET property, if this procedure does not affect bifurcations in the family in any significant way.

\begin{conjecture}
    Structurally stable glocal families of vector fields have the~SET property.
\end{conjecture}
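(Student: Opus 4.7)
The plan is to build the required moderate equivalence slice by slice in the auxiliary parameter $\varepsilon_1$, exploiting the fact that for each fixed $\varepsilon_1$ in a small neighborhood $B_1' \subset B_1$ of the origin, the $B_0$-parameter family $W_{\varepsilon_1}(\varepsilon_0) := w_{(\varepsilon_0,\ \varepsilon_1)}|_U$ lies inside the $C^k$ structural stability neighborhood of $V_U = W_0$. Structural stability thus yields, for each such $\varepsilon_1$, a moderate equivalence
$$
    \mathbf{H}_{\varepsilon_1}(\varepsilon_0,\ x) = \bigl(h_{\varepsilon_1}(\varepsilon_0),\ H_{(\varepsilon_0,\ \varepsilon_1)}(x)\bigr)
$$
between $W_{\varepsilon_1}|_U$ and $\widetilde{W}_{\varepsilon_1}|_U = V_U$ in some neighborhood of $\LBS(W_{\varepsilon_1}) \cap U$, normalized to $\mathbf{H}_0 = \mathrm{id}$ since $W_0 = V$. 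Assembling them by
$$
    \mathbf{H}(\varepsilon_0,\ \varepsilon_1,\ x) = \bigl(h_{\varepsilon_1}(\varepsilon_0),\ \varepsilon_1,\ H_{(\varepsilon_0,\ \varepsilon_1)}(x)\bigr)
$$
automatically gives a map that preserves the auxiliary parameter, so the base homeomorphism has the form $h(\varepsilon_0,\ \varepsilon_1) = (\widetilde{h}(\varepsilon_0,\ \varepsilon_1),\ \varepsilon_1)$ required by the strong version of extension triviality.

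For this slicewise assembly to qualify as a moderate equivalence in the sense of Definition~\ref{moderate}, three things have to be checked: (i) the slicewise domains $M_{\varepsilon_1}$ glue into a single neighborhood $M$ of $\LBS(W) \cap U$; (ii) the combined base map $(\varepsilon_0,\ \varepsilon_1) \mapsto (h_{\varepsilon_1}(\varepsilon_0),\ \varepsilon_1)$ is a homeomorphism of $B_0 \times B_1'$; and (iii) $\mathbf{H}$ together with $\mathbf{H}^{-1}$ is continuous on the separatrix skeleton $C_\partial(W)$, not merely on each slice $\{\varepsilon_1 = \mathrm{const}\}$. All three requirements reduce to an honest joint continuous dependence of $\mathbf{H}_{\varepsilon_1}$ on $\varepsilon_1$ together with the other variables.

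This parametric continuity is the crux of the argument and the reason the statement is phrased as a conjecture. The bare definition of structural stability only supplies moderate equivalences slicewise, with no control over joint dependence on an additional parameter; a naive gluing by partitions of unity fails because moderate equivalences are only continuous, not smooth, so standard interpolation tools are unavailable. My approach would be to open up the proof of structural stability of glocal families and show that it can be made \emph{parametric}: given a continuous path $\{V_t\}_{t \in T}$ in the $C^k$ topology, running through a structurally stable $V_U$ at $t = t_0$ and staying inside its stability neighborhood, the moderate equivalences $\mathbf{H}_t : V_t \to V_{t_0}$ can be chosen to depend continuously on $t$ with $\mathbf{H}_{t_0} = \mathrm{id}$. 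This is the same sort of parametric structural stability that implicitly underlies the moderate equivalences produced in the proof of Theorem~\ref{LBS-main-theorem} in \citep{Goncharuk-LBS}; applied to the path $t = \varepsilon_1 \in B_1'$ it delivers the required $\mathbf{H}_{\varepsilon_1}$.

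Once parametric structural stability is available, the rest of the proof reduces to routine verifications: continuity of $(\varepsilon_0,\ \varepsilon_1,\ x) \mapsto H_{(\varepsilon_0,\ \varepsilon_1)}(x)$ on $C_\partial(W)$ follows from the slicewise continuity on $C_\partial(W_{\varepsilon_1})$ built into moderate equivalence, combined with the parametric continuity in $\varepsilon_1$; the invariance and image conditions on $\LBS$ are checked slice by slice using that $\LBS(W) \cap (\{\varepsilon_1\} \times S^2)$ is captured by $\LBS(W_{\varepsilon_1})$; and the normalization $H_0 = \mathrm{id}$ is built into the construction. Thus the main obstacle is the upgrade from classical to parametric structural stability for glocal families, which I expect to demand a careful revisit of the existing proof rather than a black-box application of the definition.
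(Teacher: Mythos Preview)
The paper states this as a \emph{conjecture} and offers no proof; there is nothing to compare your proposal against. What you have written is not a proof either, and you are candid about this: the whole argument is conditional on a ``parametric structural stability'' lemma that you do not prove and that the paper does not supply. So at best this is a roadmap, not a demonstration.

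Even as a roadmap, a few points deserve scrutiny. First, the paper's informal definition of structural stability (``equivalent to any nearby family'') does not specify which equivalence is meant; you are assuming it yields a \emph{moderate} equivalence in a neighborhood of the $\LBS$, which is already an interpretation rather than a given. Second, your claim that ``$\LBS(W) \cap (\{\varepsilon_1\} \times S^2)$ is captured by $\LBS(W_{\varepsilon_1})$'' is not obvious: $\LBS(W)$ is a subset of $S^2$ built from $\Acc(W)$, which involves closures taken over the \emph{entire} base $B_0 \times B_1$, not slicewise, so there is no a priori reason the slice decomposition you use should control it. Third, the continuity requirement in Definition~\ref{moderate} is on $C_\partial(W)$, which again is defined via the full family; reducing this to slicewise continuity on $C_\partial(W_{\varepsilon_1})$ plus parametric dependence needs an argument you have not given.

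In short: the paper leaves this open, your outline identifies the natural strategy and the main obstruction, but several of the ``routine verifications'' you defer are not routine and would themselves require work comparable to the parametric lemma you flag.
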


\subsection{Topologically distinguished subfamilies}\label{subsec:td}

\begin{definition}
    A glocal subfamily $V \subset W$ consisting of structurally unstable orbitally topologically equivalent vector fields is topologically distinguished in~$W$ if any structurally unstable vector field of the family $W$ that is orbitally topologically equivalent to a field from the family $V$ belongs to $V$.

    Let $U \subset S^2$ be an open domain, $V$ and $W$ be the same as before. The~subfamily $V$ is topologically distinguished on $U$ in $W$ if any structurally unstable vector field from $W$ that is orbitally topologically equivalent on $U$ to a field from $V|_U$ belongs to~$V$.
\end{definition}

\begin{example}\label{ex4}
    Consider a generic two-parameter glocal family with one saddle-node fixed point and one parabolic cycle. Then the subfamily with the~parabolic cycle is topologically distinguished. It is also topologically distinguished in $U$ for any open neighborhood $U$ of the parabolic cycle.
\end{example}

\begin{example}\label{ex5}
    Consider a generic two-parameter unfolding of the field from Example~\ref{ex1}. Then a subfamily preserving one of the parabolic cycles is topologically distinguished in $U$ for any open neighborhood $U$ of the preserved parabolic cycle that does not intersect with another cycle. Since there are two parabolic cycles, there are two such topologically distinguished subfamilies.
\end{example}

\subsection{Main theorem}
\label{subsec:main}

\begin{theorem}[]\label{main}
    Let a glocal family $V = \{v_{(\varepsilon_1,\ \varepsilon_2)} \left|\ (\varepsilon_1,\ \varepsilon_2) \in (B_1 \times B_2,\ 0) \right.\}$ have a disconnected $\LBS$: $\LBS(V) = Z_1 \sqcup Z_2 =: Z$. Suppose there are disjoint open neighborhoods $U_1 \supset Z_1$ and $U_2 \supset Z_2$ such that the following two conditions are satisfied:
    \begin{itemize}
        \item[SET:] $V_{1} := V|_{B_1 \times U_1}$ has the SET property as a $U_1$-glocal family, $V_{2} := V|_{B_2 \times U_2}$ has the SET property as a $U_2$-glocal family;
        \item[TD:] a family $\{v_\varepsilon \mid \varepsilon_j = 0\}$ is topologically distinguished on $U_j$, $j = 1, 2$.
    \end{itemize}
    
    Then $V$ is a Cartesian product of bifurcations in subfamilies $V_1$ and $V_2$.
\end{theorem}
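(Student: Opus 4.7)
The strategy is to exhibit an explicit split family $W$ and prove it is weakly equivalent to $V$ on the whole $S^2$, invoking Theorem~\ref{LBS-main-theorem} to promote a local moderate equivalence near $\LBS(V)$ to a global weak equivalence. First, I would choose smooth cut functions $\varphi_j \colon S^2 \to [0,1]$ with $\supp \varphi_j \subset U_j$ and $\varphi_j \equiv 1$ on a neighborhood $N_j$ of $Z_j$; such $\varphi_j$ exist because $Z_j$ is compact and $Z_j \subset U_j$. Let $W = \{w_\varepsilon\}$ be the split family with splitting data $(V, B_1, B_2, \varphi_1, \varphi_2)$. By construction $w_0 = v_0$, $w_\varepsilon \equiv v_{p_j(\varepsilon)}$ on $N_j$, and $w_\varepsilon \equiv v_0$ on $S^2 \setminus (U_1 \cup U_2)$. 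A direct check from Definition~\ref{def:cLBS} yields $\LBS(W) = Z_1 \sqcup Z_2 = \LBS(V)$: the $\ELBS$ is determined by $w_0 = v_0$, and near $Z_j$ the set $\Acc(W)$ coincides with that of the one-parameter subfamily $\{v_{p_j(\varepsilon)}\}$.

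Next, I would apply the SET hypothesis on each side. The SET property for the $U_1$-glocal family $V_1$, with $V$ regarded as its glocal extension, yields a moderate equivalence $\mathbf{H}_1 = (h_1, H_{1, \varepsilon})$ between $V$ and the trivial extension $\widetilde{V}_1 = \{v_{p_1(\varepsilon)}\}$ in a neighborhood $M_1 \subset N_1$ of $Z_1$, with $h_1(\varepsilon_1, \varepsilon_2) = (\widetilde{h}_1(\varepsilon),\, \varepsilon_2)$ and $H_{1, 0} = \mathrm{id}$. Because $W \equiv \widetilde{V}_1$ on $N_1$, this same $\mathbf{H}_1$ is a moderate equivalence between $V$ and $W$ in the neighborhood $M_1$ of $Z_1$. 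A symmetric application of SET for $V_2$ gives $\mathbf{H}_2 = (h_2, H_{2, \varepsilon})$ realizing moderate equivalence of $V$ and $W$ in a neighborhood $M_2 \subset N_2$ of $Z_2$, with $h_2(\varepsilon) = (\varepsilon_1,\, \widetilde{h}_2(\varepsilon))$.

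The main obstacle is to glue $\mathbf{H}_1$ and $\mathbf{H}_2$ into a single moderate equivalence defined near $Z_1 \sqcup Z_2$: the base homeomorphisms $h_1, h_2$ need not agree on $B$. The TD hypothesis is precisely what resolves this. For $\varepsilon = (0, \varepsilon_2)$ with $\varepsilon_2 \neq 0$ small, the field $v_\varepsilon$ is structurally unstable (its $\LBS$ contains $Z_2$), and $\mathbf{H}_1$ yields an orbital topological equivalence between $v_\varepsilon|_{N_1}$ and $v_{(\widetilde{h}_1(0, \varepsilon_2),\, 0)}$. Since $\LBS(V) \cap U_1 = Z_1 \subset N_1$, the family $V$ is structurally stable on $U_1 \setminus \overline{N_1}$, which lets one extend this topological equivalence to all of $U_1$; the TD hypothesis for $\{\varepsilon_1 = 0\}$ on $U_1$, applied to the structurally unstable field $v_{(\widetilde{h}_1(0, \varepsilon_2),\, 0)}$, then forces $\widetilde{h}_1(0, \varepsilon_2) \equiv 0$, and symmetrically $\widetilde{h}_2(\varepsilon_1, 0) \equiv 0$. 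A further application of TD (now comparing $v_\varepsilon|_{U_j}$ with $v_{p_j(\varepsilon)}|_{U_j}$, which are structurally unstable and orbitally topologically equivalent on $U_j$ via the extended SET equivalence) allows $\widetilde{h}_1$ to be chosen depending only on $\varepsilon_1$ and $\widetilde{h}_2$ only on $\varepsilon_2$. Then $h(\varepsilon_1, \varepsilon_2) := (\widetilde{h}_1(\varepsilon_1),\, \widetilde{h}_2(\varepsilon_2))$ is a common base homeomorphism of $B_1 \times B_2$, and since $M_1 \cap M_2 = \varnothing$ the piecewise recipe $H_\varepsilon := H_{1, \varepsilon}$ on $M_1$, $H_\varepsilon := H_{2, \varepsilon}$ on $M_2$ assembles into a single moderate equivalence $\mathbf{H} = (h, H_\varepsilon)$ of $V$ and $W$ in a neighborhood of $Z$ with $H_0 = \mathrm{id}$; the required continuity and neighborhood conditions follow from those satisfied by $\mathbf{H}_1$ and $\mathbf{H}_2$ individually.

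Finally, since $w_0 = v_0$ and $H_0 = \mathrm{id}$ provide the agreement at $\varepsilon = 0$ required by the axiomatic definition, Theorem~\ref{LBS-main-theorem} applied to $V$ and $W$ yields their weak equivalence on all of $S^2$. By Definition~\ref{def:cart}, this exhibits $V$ as a Cartesian product of bifurcations in the subfamilies $V_1$ and $V_2$, completing the proof.
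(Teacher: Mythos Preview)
Your overall architecture is right—build a split family $W$, establish a moderate equivalence near $\LBS(V)$, then invoke Theorem~\ref{LBS-main-theorem}—but two steps have genuine gaps.

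\textbf{The gluing of base homeomorphisms.} You obtain $\mathbf{H}_1=(h_1,H_{1,\varepsilon})$ on $M_1$ and $\mathbf{H}_2=(h_2,H_{2,\varepsilon})$ on $M_2$ by applying SET \emph{independently} on the two sides, and then try to merge them. The TD hypothesis indeed forces $\widetilde h_1(0,\varepsilon_2)=0$ and $\widetilde h_2(\varepsilon_1,0)=0$, but your stronger claim—that one may \emph{choose} $\widetilde h_1$ to depend only on $\varepsilon_1$ and $\widetilde h_2$ only on $\varepsilon_2$—does not follow. TD only constrains the hypersurfaces $\{\varepsilon_j=0\}$; it says nothing about how $h_1$ moves points with $\varepsilon_1\ne 0$ as $\varepsilon_2$ varies. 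The SET property hands you one specific moderate equivalence, and you are not free to alter its base homeomorphism at will while keeping the same phase-space maps $H_{j,\varepsilon}$. Even the weaker candidate $h(\varepsilon)=(\widetilde h_1(\varepsilon),\widetilde h_2(\varepsilon))$ is not obviously a homeomorphism, since $h_1,h_2$ are merely $C^0$. The paper avoids this entirely by a \emph{sequential} rather than parallel construction (the Splitting Lemma~\ref{splitting-of-two}): apply SET on $U_1$ with base change $h$, extend by $(h,\mathrm{id})$ over $U_2$ so that the family there becomes $\{v_{h^{-1}(\varepsilon)}\}$; TD then shows $h(0,\varepsilon_2)=(0,\varepsilon_2)$, so this is again an extension of $V_2$, and one applies SET on $U_2$ to \emph{that} family to get $g$. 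The single base homeomorphism is the composite $g\circ h$, and no gluing is needed.

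\textbf{The computation of $\LBS(W)$.} Your ``direct check'' that $\LBS(W)=Z$ is too quick. The cut functions $\varphi_j$ are not identically~$1$ on all of $U_j$, so in the transition region $U_j\setminus N_j$ the field $w_\varepsilon$ is an interpolation between $v_{p_j(\varepsilon)}$ and $v_0$; controlling $\Acc(W)$ there is exactly the content of the Second Stabilization Theorem~\ref{ST2}, whose proof occupies Section~\ref{sec:proof-stab} and requires the cut functions to be supported in carefully chosen nested neighborhoods $\Omega_1\Subset\Omega_s$ satisfying the no-entrance and no-mixed-location lemmas. In the paper this is handled by first running the Splitting Lemma with large cut functions ($\widetilde\varphi_j\equiv 1$ on $U_j$) to get the moderate equivalence on some neighborhood $U$, and only then invoking Theorem~\ref{ST2} inside $U$ to produce a \emph{second} splitting $\splt V$ with $\LBS(\splt V)\subset Z$, to which Theorem~\ref{LBS-main-theorem} applies.
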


\begin{example}\label{ex6}
    Recall Example~\ref{ex1} again. A generic glocal family $V$ unfolding the field shown on Fig.~\ref{f1} has a disconnected $\LBS$. One can see that Examples~\ref{ex3}~and~\ref{ex5} suggest that SET and TD conditions are satisfied, so the Main theorem~\ref{main} is applicable. An accurate elaboration of this idea in more general setting constitutes the proof of theorem~\ref{main}. It implies that $V$ is equivalent to a Cartesian product of bifurcations of two parabolic cycles.
\end{example}

\begin{example}\label{ex7}
Consider the same vector field (denote it by $v_0$) as in Example~\ref{ex1}. Now let two-parameter family $V = \{\left.v_{(\varepsilon,\ \delta)}\ \right| (\varepsilon,\ \delta) \in (\mathbb{R} \times \mathbb{R},\ 0)\}$ be an~atypical unfolding of $v_0$ such that both parabolic cycles vanish when $\varepsilon > 0$ (so each parabolic cycle is split into two hyperbolic cycles when $\varepsilon < 0$), while $\delta$ just does nothing, i.e. $v_{(\varepsilon,\ \delta)} \equiv v_{(\varepsilon,\ 0)}$. It turns out that $\mathrm{LBS}(V)$ is a union of two parabolic cycles anyway, but their bifurcations are not independent! This family is not a Cartesian product of bifurcations. One cannot apply the Main theorem~\ref{main} because both SET and TD conditions are violated: it is impossible to split the parameter base into two factors with the properties prescribed by the conditions of theorem~\ref{main}.
\end{example}

In the last subsection we give another version of the Main theorem where we use Banach manifolds. This version is easier to apply than the previous~one.

\subsection{Banach submanifolds}
\label{subsec:banach}

\begin{definition}[\citep{lang2001fundamentals}]
    Let $\mathfrak{X}$ be a Banach space (or manifold of some fixed smoothness class). A subset $\mathfrak{M} \subset \mathfrak{X}$ is a Banach submanifold of a finite codimension $n$ if for any point $v \in \mathfrak{M}$ there is a smooth map $\psi$ of some neighborhood $\mathfrak{U}$ of $v$ in $\mathfrak{X}$ to $\mathbb{R}^n$ such that $\psi(v) = 0$, $\psi^{-1}(0) = \mathfrak{M} \cap \mathfrak{U}$ and $\psi$ has maximal rank at $v$.
\end{definition}

Given a Banach submanifold $\mathfrak{M}$ in $\Vect^{*}_k\left(S^2\right)$\footnote{$\Vect^{*}_k\left(S^2\right)$ is an open subset of $\Vect_k\left(S^2\right)$, so it is a Banach manifold.}, somitimes it is possible to consider a~Banach submanifold $\widetilde{\mathfrak{M}}$ in $\Vect^{*}_k\left(\Omega\right)$, $\Omega \subset S^2$, which in a sense characterizes $\mathfrak{M}$.

\begin{definition}[Localization of a submanifold]
    Let $\mathfrak{M}$ be a Banach submanifold in $\Vect^{*}_k\left(S^2\right)$, and $\Omega \subset S^2$ be an open subset. Let $v \in \mathfrak{M}$, and $\pi_\Omega : \Vect^{*}_k\left(S^2\right) \to \Vect^{*}_k\left(\Omega\right)$ be the natural embedding: $\pi_\Omega(v) = v|_\Omega$.  

    Suppose there exists some neighborhood $\widetilde{\mathfrak{U}} \subset \Vect^{*}_k\left(\Omega\right)$ of $v|_\Omega$ and a~smooth map $\widetilde{\psi} : \widetilde{\mathfrak{U}} \to \mathbb{R}^n$ such that $\psi := \widetilde{\psi} \circ \pi_\Omega$ satisfies three conditions:
    \begin{itemize}
        \item[1)] $\psi(v) = 0$;
        \item[2)] $\psi^{-1}(0) \cap \mathfrak{U} = \mathfrak{M} \cap \mathfrak{U}$ for some open neighborhood $\mathfrak{U}$ of $v$ in $\Vect^{*}_k\left(S^2\right)$;
        \item[3)] $\psi$ has maximal rank at $v$.
    \end{itemize}
    Then a Banach submanifold $\widetilde{\mathfrak{M}}$ defined by $\widetilde{\psi}$ in some neighborhood of $v|_\Omega$ is called a~localization of $\mathfrak{M}$ to $\Omega$ at $v$. If $\mathfrak{M}$ admits a localization to $\Omega$ at any $v \in \mathfrak{M}$, it is called $\Omega$-localizable.
\end{definition}

\begin{remark}
    It is clear that $\widetilde{\psi}$ in the definition above has the maximal rank at~$v|_\Omega$, so $\widetilde{\mathfrak{M}}$ has the same codimension as $\mathfrak{M}$.
\end{remark}

\begin{remark}
    Properties of vector fields are often formulated in terms of equalities and inequalities. More than that, such equalities and inequalities often depend on the behavior of a field on some small subsets of the phase space. That's why it is not rare when Banach manifolds concerned with bifurcations are localizable.
\end{remark}

\begin{definition}
    Let $\mathfrak{M} \subset \Vect^{*}_k\left(U\right)$, where $U \subset S^2$ is an open domain, be a Banach submanifold of finite codimension $n$. We say that $\mathfrak{M}$ has the~SET property, if any $U$-glocal $n$-parameter family $V_U$ unfolding a vector field $v \in \mathfrak{M}$ has the SET property provided that it is transversal to $\widetilde{\mathfrak{M}}$. 
\end{definition}

\begin{definition}
    We call a structurally unstable vector field $v \in \Vect^{*}_k\left(M\right)$ tame, if all structurally unstable vector fields, that are orbitally topologically equivalent to $v$ and close to $v$ in $\Vect^{*}_k\left(M\right)$, form a Banach submanifold of finite codimension. If a field $v$ is tame, the respective Banach submanifold is called a tamer of $v$.
\end{definition}

\begin{conjecture}
    A vector field $v \in \Vect^{*}_k\left(\Omega\right)$, where $\Omega$ is some open subset of $S^2$ or $S^2$ itself, that contains only a generic degeneracy of codimension $1$, is tame.
\end{conjecture}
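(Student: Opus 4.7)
The plan is to argue by cases over the type of generic codimension-$1$ degeneracy that $v$ carries. Modulo orbital topological equivalence, there is a finite classical list of such degeneracies on a planar surface: a saddle-node singular point, an Andronov--Hopf point with non-vanishing first Lyapunov coefficient, a semi-stable (parabolic) limit cycle, a homoclinic loop at a hyperbolic saddle with non-zero saddle value, and a heteroclinic connection between two hyperbolic saddles. For each case I would build a smooth scalar defining function $\widetilde{\psi}$ on a $C^k$-neighbourhood $\widetilde{\mathfrak{U}} \subset \Vect^{*}_k(\Omega)$ of $v$; the candidate tamer is then $\widetilde{\mathfrak{M}} := \widetilde{\psi}^{-1}(0)$.

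For the three local degeneracies, $\widetilde{\psi}$ is produced by tracking the degenerate object with the implicit function theorem and reading off the offending jet coefficient. At a saddle-node, $w \mapsto p(w)$ follows the singular point and $\widetilde{\psi}(w) := \det Dw(p(w))$; at an Andronov--Hopf point one uses $\widetilde{\psi}(w) := \operatorname{tr} Dw(p(w))$; at a parabolic cycle one fixes a transversal section, smoothly tracks the Poincar\'e return map $P_w$, and takes the first nontrivial coefficient of a Weierstrass factorisation of $P_w - \mathrm{id}$. In each case maximal rank of $d\widetilde{\psi}$ at $v$ is a restatement of the standard genericity condition, since one may freely perturb the relevant jet entry of $v$ at the degenerate object. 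For the two non-local cases, $\widetilde{\psi}(w)$ is the signed displacement, measured along a fixed transversal arc inside $\Omega$, between the appropriate stable and unstable separatrices of the saddle(s) of $w$. Smooth dependence follows from smooth dependence of invariant manifolds of hyperbolic saddles on the field, and genericity is exactly transversality of a one-parameter unfolding, i.e.\ non-vanishing of $d\widetilde{\psi}$.

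The second step is to verify that, near $v$, the submanifold $\widetilde{\mathfrak{M}}$ coincides with the set of structurally unstable fields orbitally topologically equivalent to $v$, i.e.\ with the tamer. Any $w \in \widetilde{\mathfrak{M}}$ close to $v$ is structurally unstable by construction; the restriction of $w$ to the complement of a small neighbourhood of the degeneracy in $\Omega$ is structurally stable because $v$ has no other non-hyperbolic structure in $\Omega$. The classical normal-form classification for each generic codim-$1$ bifurcation yields a local orbital equivalence between $v$ and $w$ at the degeneracy, which then extends to a global orbital equivalence on $\Omega$ by a Peixoto-style gluing, using that hyperbolic singular points, hyperbolic cycles, and transverse separatrix configurations persist and are uniquely matched. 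Conversely, any structurally unstable $w$ close to $v$ and orbitally equivalent to $v$ on $\Omega$ must carry its unique instability at a nearby location of the same type, forcing $w \in \widetilde{\mathfrak{M}}$.

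The main obstacle will be this second step for the two non-local bifurcations. One has to rule out the birth of extra limit cycles or unexpected recurrence from the perturbation (the Andronov--Leontovich theorem and its heteroclinic analogue), which is where the non-vanishing saddle quantity is actually used, and one has to make the displacement functional $\widetilde{\psi}$ well-defined when the separatrices of $v$ are not entirely contained in $\Omega$. The latter is handled by choosing the transversal inside $\Omega$ close enough to the relevant saddle, so that for all $w$ close enough to $v$ the corresponding separatrix arcs needed to define $\widetilde{\psi}$ remain in $\Omega$; the former is the genuinely delicate point and is precisely the place where the genericity hypothesis must be invoked in full strength.
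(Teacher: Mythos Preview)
The paper does not prove this statement: it is stated as a \emph{Conjecture} and left open, so there is no ``paper's own proof'' to compare against. What you have written is therefore not a competing argument but a proposed attack on an open problem of the paper.

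As an outline your plan is the natural one, and the paper's Theorem~\ref{th:sotomayor} (the Sotomayor list) confirms that a case analysis over the basic list is the right organising principle. Two points, however, need attention. First, your list of cases has five items, while the paper's basic list has six: you omit the class $\HC$, a homoclinic curve of a saddle-node entering through the parabolic sector. For $\HC$ the defining functional $\widetilde{\psi}$ is the same as for $\SN$, but you must still argue that the homoclinic return persists for all nearby $w \in \widetilde{\psi}^{-1}(0)$ (this is an openness statement, not a transversality one) and that the resulting fields are orbitally equivalent to $v$; this is not covered by anything you wrote. Second, and more seriously, your ``Peixoto-style gluing'' step presumes a compact phase space or a domain with boundary transverse to the flow, but $\Omega$ is an arbitrary open subset of $S^2$ with no control on $\partial\Omega$; orbits of $v$ and of nearby $w$ may enter and leave $\Omega$ freely, the separatrix skeleton restricted to $\Omega$ need not be finite, and there is no obvious canonical region decomposition to match. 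This is precisely the kind of difficulty that makes the statement a conjecture rather than a routine lemma, and your proposal does not yet contain an idea for handling it.

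A smaller remark: for $\PC$, ``the first nontrivial coefficient of a Weierstrass factorisation of $P_w - \mathrm{id}$'' is not the functional you want. The natural $\widetilde{\psi}$ is the discriminant of the degree-two Weierstrass polynomial of $P_w - \mathrm{id}$ (equivalently, one tracks the fixed point $x(w)$ with multiplicity and sets $\widetilde{\psi}(w) = P_w'(x(w)) - 1$ on the locus where a fixed point persists); the ``first nontrivial coefficient'' vanishes identically on that locus.
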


We are ready to formulate the second version of the Main theorem now.

\begin{theorem}\label{main+}
Let $\mathfrak{M} \ni v_0$ be a Banach submanifold of $\Vect^{*}_k\left(S^2\right)$. Suppose that $\mathfrak{M} = \mathfrak{M}_1 \cap \mathfrak{M}_2$ for a pair of transversal Banach submanifolds of finite codimension. Let $V$ be an unfolding of $v_0$ transversal to $\mathfrak{M}$. Denote by $Z := \LBS (V)$. Suppose that it is a union of two connected components: $Z = Z_1 \sqcup Z_2$.

Let there exist disjoint neighborhoods $U_1$ and $U_2$ such that for each $j = 1,2$ the following holds:
\begin{itemize}
    \item[i] $Z_j \subset U_j$;
    \item[ii] $\mathfrak{M}_j$ is $U_j$-localizable;
    \item[iii] there is a localization $\widetilde{\mathfrak{M}}_j$ of $\mathfrak{M}_j$ to $U_j$ at $v_0$ with the SET-property;
    \item [iv] $\widetilde{\mathfrak{M}}_j$ is also a tamer of $v_0|_{U_j}$ in $\Vect^{*}_k\left(U_j\right)$.
\end{itemize}
Then $V$ is weakly equivalent on $S^2$ to a Cartesian product of two bifurcations, one on $U_1$, another on $U_2$.
\end{theorem}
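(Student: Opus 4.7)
My plan is to reduce Theorem~\ref{main+} to the previous Main theorem~\ref{main} by producing a coordinate splitting $B = B_1 \times B_2$ adapted to $\mathfrak{M}_1$ and $\mathfrak{M}_2$, and then extracting the SET and TD hypotheses of that theorem from the localization and tamer properties provided by \textit{iii} and \textit{iv}.

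First I would set up the splitting. Let $n_j = \codim \mathfrak{M}_j$, so $\codim \mathfrak{M} = n_1 + n_2$ by transversality of $\mathfrak{M}_1$ and $\mathfrak{M}_2$. Write $\widetilde{\psi}_j$ for the defining map of $\widetilde{\mathfrak{M}}_j$ near $v_0|_{U_j}$, and let $\psi_j = \widetilde{\psi}_j \circ \pi_{U_j}$ be the resulting defining map of $\mathfrak{M}_j$ near $v_0$ guaranteed by $U_j$-localizability. Transversality of $V$ to $\mathfrak{M}$ together with transversality of the two factors implies that
\[
    \Phi := (\psi_1 \circ V,\ \psi_2 \circ V) \colon B \to \mathbb{R}^{n_1} \times \mathbb{R}^{n_2}
\]
is a submersion at $0$ with $\Phi(0) = 0$. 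After (if necessary) cutting down to a transverse slice of $\ker d_0\Phi$, one may take $(\varepsilon_1,\varepsilon_2) := \Phi(\varepsilon)$ as new coordinates on $B$. This writes $B = B_1 \times B_2$ so that $\{\varepsilon_2 = 0\}$ is identified with $V \cap \mathfrak{M}_2$, and symmetrically $\{\varepsilon_1 = 0\}$ with $V \cap \mathfrak{M}_1$; in particular $V \cap \mathfrak{M} = \{v_0\}$.

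Next I would verify SET and TD. For SET: the family $V_1 = V|_{B_1 \times U_1}$ is a $U_1$-glocal $n_1$-parameter unfolding of $v_0|_{U_1} \in \widetilde{\mathfrak{M}}_1$, and by the coordinate choice $\widetilde{\psi}_1 \circ V_1 = \psi_1 \circ V|_{\varepsilon_2 = 0}$ reduces to the identity $\varepsilon_1 \mapsto \varepsilon_1$, so $V_1$ is transversal to $\widetilde{\mathfrak{M}}_1$; hypothesis \textit{iii} then yields SET for $V_1$, and symmetrically for $V_2$. For TD: the subfamily $\{\varepsilon_1 = 0\}$ lies in $\mathfrak{M}_1$ by construction, so its restriction to $U_1$ lies in $\widetilde{\mathfrak{M}}_1$. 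By the tamer hypothesis \textit{iv}, near $v_0|_{U_1}$ the submanifold $\widetilde{\mathfrak{M}}_1$ consists precisely of the structurally unstable vector fields on $U_1$ orbitally topologically equivalent to $v_0|_{U_1}$. Hence any structurally unstable $v_\varepsilon \in V$ that is orbitally topologically equivalent on $U_1$ to some field from this subfamily must satisfy $v_\varepsilon|_{U_1} \in \widetilde{\mathfrak{M}}_1$, whence by $U_1$-localizability $v_\varepsilon \in \mathfrak{M}_1$, i.e.\ $\varepsilon_1 = 0$; symmetrically on $U_2$. With SET and TD in hand, Theorem~\ref{main} delivers the desired weak equivalence of $V$ to a Cartesian product of bifurcations in $V_1$ and $V_2$.

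The main technical obstacle is the TD verification, where one has to thread carefully between three a priori different ambient spaces --- $\Vect^{*}_k(S^2)$, $\Vect^{*}_k(U_j)$, and the finite-dimensional base $B$ --- simultaneously invoking the tamer (a topological characterisation of $\widetilde{\mathfrak{M}}_j$ in $\Vect^{*}_k(U_j)$), $U_j$-localizability (pulling back from $\Vect^{*}_k(U_j)$ to $\Vect^{*}_k(S^2)$), and the coordinate identification $\mathfrak{M}_j \leftrightarrow \{\varepsilon_j = 0\}$. A secondary concern is the case $\dim B > n_1 + n_2$, where $\Phi$ is only a submersion; one must then argue that directions in $\ker d_0\Phi$ correspond to parameters along which no bifurcation occurs and can be absorbed into the weak equivalence without destroying the Cartesian product structure.
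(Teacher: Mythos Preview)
Your proposal is correct and follows essentially the same route as the paper: choose coordinates so that $\mathfrak{M}_j \cap V$ pulls back to $\{\varepsilon_j = 0\}$, deduce SET for $V|_{B_j \times U_j}$ from transversality to the localization $\widetilde{\mathfrak{M}}_j$ (hypothesis \textit{iii}), deduce TD from the tamer hypothesis \textit{iv} together with $U_j$-localizability, and then invoke Theorem~\ref{main}. Your explicit construction of $\Phi$ and your flagging of the case $\dim B > n_1 + n_2$ are actually more careful than the paper, which simply asserts ``without loss of generality \ldots\ the base of $V$ is $(\mathbb{R}^{n_1}\times\mathbb{R}^{n_2},0)$'' and thereby tacitly restricts to the minimal-dimensional case.
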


\section{Proof of the Main theorem modulo some auxiliary facts}
\label{sec:proof-main}

We prove first the Main theorem~\ref{main}, then~\ref{main+}. The proofs use some auxiliary facts that we state now.

\subsection{Stabilization of families}
\label{subsec:stabilization}

\begin{definition}[Stabilization of a family]\label{def:stab}
    Let $V = \left\{v_{\varepsilon}\left|\ \varepsilon \in \left(B,\ 0\right)\right.\right\}$ be a~glocal family of vector fields, $Z = \LBS(V)$.

    Given an open neighborhood $U \supset Z$ and a cut function $\varphi \in C^\infty_0(S^2)$ such that $\varphi|_{U} \equiv 1$, we define the respective stabilization of $V$ as a glocal family $W = \left\{w_{\varepsilon}\left|\ \varepsilon \in \left(B,\ 0\right)\right.\right\}$, where
    $$
        w_\varepsilon = v_0 + \varphi \cdot \left(v_{\varepsilon} - v_0\right).
    $$
\end{definition}

Apparently, a well-chosen stabilization preserves the $\LBS$.

\begin{theorem}[First stabilization theorem]\label{ST1}
    For any glocal family of vector fields $V = \{v_\varepsilon \left| \ \varepsilon \in (B,\ 0) \right.\}$ and any open neighborhood $\Omega$ of $Z = \LBS (V)$ there are open neighborhoods $Z \subset \Omega_1 \ssubset \Omega_s \subset \Omega$ such that for any cut function $\varphi$ which is supported on $\overline{\Omega_s}$ and identically $1$ on $\Omega_1$ the respective stabilization $\stab V$ of $V$ has the same $\LBS$ as $V$: $\LBS (\stab V) = Z$.
\end{theorem}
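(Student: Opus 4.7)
The plan is to verify $\LBS(\stab V) = Z$ directly from the constructive Definition~\ref{def:cLBS}. The key observation is that $w_0 = v_0$, so $\ELBS(w_0) = \ELBS(v_0)$ and $\operatorname{Sing}(w_0) = \operatorname{Sing}(v_0)$ depend only on the unperturbed field and coincide for the two families; the difference between $\LBS(V)$ and $\LBS(\stab V)$ must therefore come entirely from the $\operatorname{Acc}$ sets, so it suffices to prove $\ELBS(v_0) \cap \operatorname{Acc}(V) = \ELBS(v_0) \cap \operatorname{Acc}(\stab V)$ modulo $\operatorname{Sing}(v_0)$.

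First I would choose $\Omega_1 \ssubset \Omega_s \ssubset \Omega$ so that $\partial \Omega_1$ and $\partial \Omega_s$ are smooth closed curves transverse to $v_0$, disjoint from $\operatorname{S}(v_0)$, with the annulus $\overline{\Omega_s} \setminus \Omega_1$ so thin that it contains no non-hyperbolic singularity or non-hyperbolic cycle of $v_0$ and, crucially, no Hausdorff limit of periodic orbits or separatrices of $V$ accumulating to $Z$. Since $Z$ is closed and the $\LBS$ is constructed to capture precisely the structures that participate in bifurcations, such a choice exists. By $C^k$-continuity $\partial \Omega_1$ and $\partial \Omega_s$ remain transverse to $v_\varepsilon$ and $w_\varepsilon$ for all sufficiently small $\varepsilon$.

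For the inclusion $Z \subseteq \LBS(\stab V)$, take $z \in Z \setminus \operatorname{Sing}(v_0)$ and witnesses $\varepsilon_n \to 0$, $p_n \to z$ with $p_n$ on a periodic orbit or separatrix of $v_{\varepsilon_n}$. For $n$ large, $p_n \in \Omega_1$, where $w_{\varepsilon_n}$ coincides with $v_{\varepsilon_n}$. Separatrices are defined locally by a saddle and its hyperbolic sector, which are shared between the two fields when the saddle lies in $\Omega_1$; for large $n$ the relevant saddles of $v_{\varepsilon_n}$ are close to saddles of $v_0$ near $Z$ and thus lie in $\Omega_1$, so $p_n \in \operatorname{Sep}(w_{\varepsilon_n})$. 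For periodic orbits, the Hausdorff limit of the cycles through $p_n$ lies in $\Omega_1$ by the choice above, whence the cycles themselves are eventually contained in $\Omega_1$ and serve as periodic orbits of $w_{\varepsilon_n}$. Thus $z \in \operatorname{Acc}(\stab V)$.

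For the reverse inclusion $\LBS(\stab V) \subseteq Z$, take $z \in \ELBS(v_0) \cap \operatorname{Acc}(\stab V) \setminus \operatorname{Sing}(v_0)$ with witnesses $p_n \to z$ on periodic orbits or separatrices of $w_{\varepsilon_n}$. Orbits lying entirely outside $\overline{\Omega_s}$ are $v_0$-orbits and immediately place $z$ in $\operatorname{Acc}(V)$; orbits meeting $\Omega_1$ agree there with $v_{\varepsilon_n}$-orbits, yielding the same conclusion via the local definitions of periodic orbit and separatrix. The principal obstacle is the third case: ``parasitic'' closed or separatrix orbits of $w_{\varepsilon_n}$ confined to the thin interpolation annulus $\Omega_s \setminus \overline{\Omega_1}$. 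Since $v_0$ has no non-hyperbolic recurrence there and $w_\varepsilon$ is $C^k$-close to $v_0$ on this set, no new bifurcation can be sustained, and any orbits of $w_\varepsilon$ in the annulus are small perturbations of hyperbolic $v_0$-structure that cannot accumulate at new points of $\ELBS(v_0)$. Making this last step rigorous, together with verifying that $\Omega_1$ can be chosen large enough to capture all relevant Hausdorff limits of the accumulating structures, is the main technical obstacle the proof must overcome.
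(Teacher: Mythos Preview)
Your overall architecture is correct: since $w_0 = v_0$, the problem reduces to comparing $\ELBS(v_0) \cap \operatorname{Acc}(V)$ with $\ELBS(v_0) \cap \operatorname{Acc}(\stab V)$. But the proposal has substantive gaps in exactly the places you flag as difficult, and it misses the technical inputs the paper uses to close them.

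The paper's proof rests on two lemmas imported from \citep{Goncharuk-LBS}: the \emph{no-mixed-location} lemma (Lemma~\ref{NML}), which guarantees that for small $\varepsilon$ every singular point or limit cycle of $v_\varepsilon$ is either entirely inside a suitable neighbourhood of $Z$ or belongs to a continuous hyperbolic family staying outside, and the \emph{no-entrance} lemma (Lemma~\ref{NEL}), which provides a neighbourhood of $\LBS^*(V)$ that no unstable separatrix of $v_\varepsilon$ can enter and no stable separatrix can leave. The neighbourhoods $\Omega_1 \ssubset \Omega_s$ are constructed from these lemmas, after first splitting $Z$ into its ``interesting'' part $Z^*$ and the non-interesting part $Z^0$ of Andronov--Hopf points and non-interesting cycles, which get a separate annular neighbourhood bounded by curves without contact. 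Transversality of the boundaries alone, as in your choice, does not deliver these properties.

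Two places where this matters concretely. In your forward inclusion you assume the saddles $s_n$ emitting the separatrices through $p_n$ eventually lie in $\Omega_1$. This is unjustified: $s_n$ may converge to a hyperbolic saddle of $v_0$ outside $Z$. The paper disposes of this via the no-entrance property --- if $s \notin \Omega^*$ then $\gamma_n$ never enters $\Omega^*$, forcing $p_n$ into the non-interesting neighbourhood $\Omega_0$, where the conclusion is immediate. In your reverse inclusion, the trichotomy (orbit outside $\overline{\Omega_s}$, orbit meeting $\Omega_1$, orbit confined to the annulus) is not exhaustive for separatrices, which can thread through all three regions; and the ``parasitic'' case you leave open is where the real content is. The paper handles it through a chain of five claims culminating in the statement that the entire arc $\bar\gamma_n(s_n,p_n)$ lies in $\Omega_*$: once such an arc exits $\Omega_*$, its forward orbit is captured by an absorbing domain of $v_0$ outside $\Omega^*$ (still absorbing for $w_n$) and cannot return to $\Omega_*$ --- again a consequence of no-entrance. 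Without Lemmas~\ref{NML} and~\ref{NEL}, or equivalents proved from scratch, the sketch cannot be completed.
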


This theorem has an important analogue concerning splittings.

\begin{theorem}[Second stabilization theorem]\label{ST2}
    For any glocal family $V$ and open neighborhood $\Omega$ of $Z = \LBS (V)$ there are open neighborhoods $Z \subset \Omega_1 \ssubset \Omega_s \subset \Omega$ such that given any two cut functions $\varphi_1$ and $\varphi_2$ with properties:
    \begin{itemize}
        \item $\varphi_1$ and $\varphi_2$ have disjoint supports,
        \item $\supp{\varphi_1} \sqcup \supp{\varphi_2} \subset \overline{\Omega_s}$,
        \item $(\varphi_1 + \varphi_2)|_{\Omega_1} \equiv 1$,
    \end{itemize}
    the inclusion $\LBS (\splt V) \subset Z$ holds for the splitting $\splt V$ of $V$ that corresponds to a splitting data $(V,\ B_1,\ B_2,\ \varphi_1,\ \varphi_2)$, see Def.~\ref{def:split}.
\end{theorem}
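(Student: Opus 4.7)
The plan is to reduce Theorem~\ref{ST2} to Theorem~\ref{ST1} by comparing the split family with the ordinary stabilization obtained from the single cut function $\varphi := \varphi_1+\varphi_2$. Since $(\splt V)_0 = v_0$, the constructive formula~\eqref{eqn:lbs} yields
\[
\LBS(\splt V) = \ELBS(v_0) \cap \bigl(\operatorname{Sing}(v_0) \cup \Acc(\splt V)\bigr).
\]
Because $Z = \ELBS(v_0) \cap (\operatorname{Sing}(v_0) \cup \Acc(V))$, the inclusion $\LBS(\splt V) \subset Z$ reduces to
\[
\Acc(\splt V) \cap \ELBS(v_0) \subset \operatorname{Sing}(v_0) \cup \Acc(V).
\]
First I would pick $\Omega_1 \ssubset \Omega_s \subset \Omega$ as supplied by Theorem~\ref{ST1} applied to $V$, possibly shrunk so that $\overline{\Omega_s}$ still carries two disjoint cut functions summing to $1$ on $\Omega_1$; Theorem~\ref{ST1} then supplies the baseline $\LBS(\stab V) = Z$ for the ordinary stabilization with cut function $\varphi$.

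The core step is to transport accumulation from $\splt V$ to $V$. Take $x \in \Acc(\splt V) \cap \ELBS(v_0)$ with $x \notin \operatorname{Sing}(v_0)$, and let $(\varepsilon^n, x_n) \to (0,x)$ with $x_n$ on a periodic orbit or separatrix $\gamma_n$ of $w_{\varepsilon^n}$. Inside $\varphi_j^{-1}(1)$ one has $w_{\varepsilon^n} = v_{p_j(\varepsilon^n)}$, outside $\supp\varphi_1 \cup \supp\varphi_2$ one has $w_{\varepsilon^n} = v_0$, and on the transition collars the field is a convex combination. The easy case is when the intersection of $\gamma_n$ with the perturbed region lies eventually in a single $\supp\varphi_j$: here the lifting reduces to an ST1-style argument applied to the one-parameter subfamily $V^{(j)} := \{v_{p_j(\varepsilon)}\}$, placing $x$ in $\Acc(V^{(j)}) \subset \Acc(V)$.

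The hard case is when $\gamma_n$ traverses both supports. I would fix transverse sections on $\partial\supp\varphi_1$ and $\partial\supp\varphi_2$, decompose $\gamma_n$ into arcs according to its crossings of these sections, and invoke an implicit function argument to realize the composite holonomy of $\gamma_n$ as the holonomy of a genuine orbit of $v_{\varepsilon'}$ for some $\varepsilon'$ of the same order as $\varepsilon^n$; the resulting orbits of $V$ then accumulate at $x$, giving $x \in \Acc(V)$. The main obstacle I foresee is the uniform control of this arc decomposition as $n\to\infty$: the number of crossings may blow up (as in sparkling-saddle-connection phenomena), and one has to verify that the limit set still lies in $\LBS(V)$. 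The $\ELBS(v_0)$-factor is crucial here, ruling out accumulation at hyperbolic sinks and sources and confining the analysis to dynamically interesting limit configurations, where the compactness and finite-return machinery underlying the proof of ST1 should apply.
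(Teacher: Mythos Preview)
Your reduction of the problem to controlling $\Acc(\splt V)\cap\ELBS(v_0)$ is exactly right, and your ``easy case'' is essentially how the paper finishes. The gap is that you treat the ``hard case'' (a separatrix or periodic orbit of $w_{\varepsilon^n}$ visiting both $\supp\varphi_1$ and $\supp\varphi_2$) as something to be handled, whereas the paper's argument shows it simply does not occur for the arcs that matter, provided $\Omega_1\ssubset\Omega_s$ are chosen correctly. The neighborhoods are not taken generically from Theorem~\ref{ST1}; they are built from two specific lemmas from \citep{Goncharuk-LBS}: the No-Mixed-Location Lemma (Lemma~\ref{NML}), which forces every limit cycle of $w_{\varepsilon^n}$ for small $\varepsilon^n$ to lie either entirely inside $\Omega_1$ or entirely outside $\Omega_s$, and the No-Entrance Lemma (Lemma~\ref{NEL}), which says that no unstable separatrix of $w_{\varepsilon^n}$ can \emph{enter} the interesting part $\Omega_*$ of the neighborhood (and dually for stable separatrices). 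Together these confine the relevant arc $\bar\gamma_n(s_n,p_n)$ of each separatrix to a \emph{single} connected component of $\Omega_*$; on that component $w_{\varepsilon^n}$ coincides with $v_{p_j(\varepsilon^n)}$, and one is immediately in your easy case.

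Your proposed holonomy/implicit-function mechanism for the hard case is therefore unnecessary, and the obstacles you anticipate (unbounded crossing numbers, sparkling connections) are genuine obstacles to \emph{that} approach --- which is precisely why the paper sidesteps it. The missing idea is to exploit the no-entrance property rather than to track orbits through both supports. Once you invoke Lemmas~\ref{NML} and~\ref{NEL} to choose $\Omega_1$ and $\Omega_s$, the proof of ST2 runs in parallel with that of ST1 and diverges only at the final line, where one observes that on the component $\Omega_*^j$ containing the arc, $w_{\varepsilon^n}=v_{(\varepsilon^n_j,0)}$.
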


\subsection{Splitting lemma}
\label{subsec:splitting}

Recall that split families are introduced above (see Def.~\ref{def:split}). Here we construct a moderate equivalence between a family satisfying certain conditions and the corresponding split family.

\begin{lemma}\label{splitting-of-two}[Splitting lemma]
    Let $V = \left\{v_{(\varepsilon_1,\ \varepsilon_2)} \left|\ (\varepsilon_1,\ \varepsilon_2) \in \left(B_1 \times B_2,\ 0\right)\right.\right\}$ be an $M$-glocal family, $\LBS(V) = Z_1 \sqcup Z_2$.
    Suppose there are disjoint open neighborhoods $U_1 \supset Z_1$ and $U_2 \supset Z_2$ such that the following two conditions are satisfied:
    \begin{itemize}
        \item[SET:] $V_{1} := V|_{B_1 \times U_1}$ has the SET property as a $U_1$-glocal family, $V_{2} := V|_{B_2 \times U_2}$ has the SET property as a $U_2$-glocal family;
        \item[TD:] a family $\{v_\varepsilon \mid \varepsilon_j = 0\}$ is topologically distinguished on $U_j$, $j = 1, 2$.
    \end{itemize}

    Then $V$ is moderate equivalent on some open neighborhood $U$, $Z_1 \sqcup Z_2 \subset U \subset U_1 \sqcup U_2$, to the split family $W$ corresponding to splitting data $(V,\ B_1,\ B_2,\ \varphi_1,\ \varphi_2 )$, provided that $\varphi_1|_{U_1} \equiv 1$ and $\varphi_2|_{U_2} \equiv 1$. Moreover, the~moderate equivalence map for $\varepsilon = 0$ is identity.
\end{lemma}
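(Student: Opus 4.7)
The plan is to apply the SET hypothesis on $U_1$ and on $U_2$ separately, thereby turning $V|_{U_1}$ into the trivial extension of $V_1$ and $V|_{U_2}$ into the trivial extension of $V_2$, and then to glue the two resulting moderate equivalences into a single one. The crucial observation is that the trivial extensions of $V_1$ and $V_2$ coincide with $W|_{U_1}$ and $W|_{U_2}$ respectively, because $\varphi_j\equiv 1$ on $U_j$; so the combined map will automatically take $V$ to $W$ on some neighbourhood $U\subset U_1\sqcup U_2$ of $Z_1\sqcup Z_2$. The real subtlety is that the two SET reparametrizations act on different axes of $B_1\times B_2$, and they must be fused into one homeomorphism of $B$. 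The TD condition is precisely what allows this fusion.

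First I would view $V|_{U_1}$ as a glocal extension of the $U_1$-glocal family $V_1$ with auxiliary parameter $\varepsilon_2$. The SET property of $V_1$ provides a moderate equivalence $\mathbf{H}^1=(h^1,H^1_\varepsilon)$ on some neighbourhood $M_1\supset Z_1$ inside $U_1$, between $V|_{M_1}$ and the trivial extension $\{v_{(\varepsilon_1,0)}|_{U_1}\}=W|_{U_1}$, with $h^1(\varepsilon_1,\varepsilon_2)=(\widetilde h^1(\varepsilon),\varepsilon_2)$ and $H^1_0=\mathrm{id}$. The key intermediate claim is that $\widetilde h^1(0,\varepsilon_2)\equiv 0$. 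Each field $v_{(0,\varepsilon_2)}$ is structurally unstable on $U_1$ because the bifurcating set $Z_1\subset U_1$ persists along the axis $\{\varepsilon_1=0\}$; under $\mathbf{H}^1$ it is topologically equivalent on $M_1$ to $v_{(\widetilde h^1(0,\varepsilon_2),0)}$. I would extend this equivalence from $M_1$ to all of $U_1$, exploiting the fact that $\LBS(V|_{U_1})=Z_1\subset M_1$, so $V$ is structurally rigid on $U_1\setminus M_1$ and the local equivalence can be completed by a ``trivialisation outside the LBS'' argument in the spirit of the first stabilization theorem. The extended equivalence makes $v_{(\widetilde h^1(0,\varepsilon_2),0)}$ structurally unstable on $U_1$ and orbitally topologically equivalent there to a member of $\{v_\varepsilon\mid\varepsilon_1=0\}|_{U_1}$; the TD hypothesis then forces $\widetilde h^1(0,\varepsilon_2)=0$.

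With this axis preservation established, reparametrising by $h^1$ turns $V|_{U_2}$ into a genuine glocal extension of $V_2$: at $\varepsilon_1'=0$ it reduces exactly to $\{v_{(0,\varepsilon_2')}|_{U_2}\}=V_2$. Applying the SET property of $V_2$ yields a second moderate equivalence $\mathbf{H}^{2'}=(h^{2'},H^{2'}_{\varepsilon'})$ on some neighbourhood $M_2\supset Z_2$ with $h^{2'}(\varepsilon_1',\varepsilon_2')=(\varepsilon_1',\widetilde h^{2'}(\varepsilon'))$ and $H^{2'}_0=\mathrm{id}$. I would then set $h:=h^{2'}\circ h^1$, a germ of homeomorphism of $B$ as the composition of two such, and glue the spatial parts by $H_\varepsilon|_{M_1}:=H^1_\varepsilon$ and $H_\varepsilon|_{M_2}:=H^{2'}_{h^1(\varepsilon)}$. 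Since $h^{2'}$ preserves the first coordinate, $h(\varepsilon)_1=\widetilde h^1(\varepsilon)$ and $h(\varepsilon)_2=\widetilde h^{2'}(h^1(\varepsilon))$, so a short direct check confirms that $H_\varepsilon$ carries $v_\varepsilon$ to $w_{h(\varepsilon)}$ on both $M_1$ and $M_2$, that $H_0=\mathrm{id}$, and that the continuity conditions on $C_\partial$ are inherited from $\mathbf{H}^1$ and $\mathbf{H}^{2'}$. The main obstacle is the $M_1\to U_1$ extension of the topological equivalence in the middle paragraph: SET delivers it on a possibly tiny neighbourhood of $Z_1$, while the TD hypothesis is formulated on the whole $U_1$, and bridging this gap cleanly is the one genuinely non-formal step in the argument.
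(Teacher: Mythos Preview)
Your proof follows the same two-step scheme as the paper's: apply SET for $V_1$ to straighten out the family on a neighbourhood of $Z_1$, use TD to see that the resulting base homeomorphism preserves the axis $\{\varepsilon_1=0\}$, then apply SET for $V_2$ to the new family on a neighbourhood of $Z_2$, and compose. The paper writes the intermediate object explicitly as a family $W'$ (your ``reparametrised $V$'' spelled out) and then checks that the second SET equivalence carries $W'$ to the split family $W$; this is exactly your final ``short direct check'' that $H_\varepsilon$ carries $v_\varepsilon$ to $w_{h(\varepsilon)}$ on each piece.

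The one substantive difference is your handling of the TD step. You flag the passage from the SET-provided orbital equivalence on a small $M_1\supset Z_1$ to an equivalence on all of $U_1$ as ``the one genuinely non-formal step,'' since the TD hypothesis is formulated on $U_1$ while SET only delivers something on $M_1$. The paper does not carry out any such extension: it simply writes ``Structurally unstable vector fields on $U_1'$ have to be mapped to structurally unstable vector fields on $U_1'$, so $h$ has to send the coordinate submanifold $\varepsilon_1=0$ to itself due to the TD condition of the lemma,'' invoking TD directly on the smaller set $U_1'$ without comment on the discrepancy. So the gap you worry about is elided, not filled, in the paper's own argument; you are being more scrupulous than the source, not diverging from it.
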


This statement admits a straightforward generalization to any finite number of parameters and neighborhoods. However, we omit in this paper the proof of the multiparameter version, because it goes by induction, while the proof of the lemma shows both the base and the induction step quite comprehensively.

\begin{proof}
    Notice that $V$ is a glocal extension of $V_1$. Because of the SET property that $V_{1}$ has, $V$ is moderate equivalent to a~trivial glocal extension of $V_1$ on some open set $U_1'$, $Z_1 \subset U_1' \subset U_1$. Let us denote by $(h,\ H)$ the corresponding map. Define a map $\mathbf{H}$ by its restrictions:
    \begin{align}
            \mathbf{H} = (h,\ H) \quad \text{on} \ B \times U_1', \nonumber\\
            \mathbf{H} = (h,\ \mathrm{id}) \quad \text{on} \ B \times U_2, \nonumber
    \end{align}
    where $B = B_1 \times B_2$.
    
    We see that the map constructed is a moderate equivalence on $U_1' \sqcup U_2$ between $V$ and $W' = \{w'_{(\varepsilon_1,\ \varepsilon_2)} \mid (\varepsilon_1,\ \varepsilon_2) \in \left(B_1 \times B_2,\ 0\right)\}$, where
    $$
        w'_{(\varepsilon_1,\ \varepsilon_2)} := v_0 + \varphi_1 \cdot (v_{(\varepsilon_1,\ 0)} - v_0) + \varphi_2 \cdot (v_{h^{-1}(\varepsilon_1,\ \varepsilon_2)} - v_0).
    $$
    By the definition of the SET property the equivalence preserves the auxiliary parameter. In other words, $h$ preserves the fibers $\varepsilon_2 = \text{const}$.
    
    Structurally unstable vector fields on $U_1'$ have to be mapped to structurally unstable vector fields on $U_1'$, so $h$ has to send the coordinate submanifold $\varepsilon_1 = 0$ to itself due to the TD condition of the lemma. Altogether this implies that $h(0,\ \varepsilon_2) = (0,\ \varepsilon_2)$ or, equivalently, $h^{-1}(0,\ \varepsilon_2) = (0,\ \varepsilon_2)$. Hence $w'_{(0,\ \varepsilon_2)} = v_{(0,\ \varepsilon_2)}$ on $U_2$, so $W'$ is a glocal extension of $V_2$.
    
    Now we can repeat the last few lines: since $V_2$ has the SET property, $W'$ is moderate equivalent to a trivial glocal extension of $V_2$ on some open set $U_2'$, $Z_2 \subset U_2' \subset U_2$. Let us denote $(g,\ G)$ the corresponding map. Define a map $\mathbf{G}$ by its restrictions:
    \begin{align}
            \mathbf{G} = (g,\ \mathrm{id}) \quad \text{on} \ B \times U_1', \nonumber\\
            \mathbf{G} = (g,\ G) \quad \text{on} \ B \times U_2'. \nonumber
    \end{align}
    We see that this map is a moderate equivalence on $U := U_1' \sqcup U_2'$ between $W'$ and $W''= \{w''_{(\varepsilon_1,\ \varepsilon_2)} \mid (\varepsilon_1,\ \varepsilon_2) \in \left(B_1 \times B_2,\ 0\right)\}$, where
    $$
        w''_{(\varepsilon_1,\ \varepsilon_2)} := v_0 + \varphi_1 \cdot (w_{g^{-1}(\varepsilon_1,\ 0)} - v_0) + \varphi_2 \cdot (v_{(0,\ \varepsilon_2)} - v_0).
    $$
    Analogously to $h$, $g$ preserves the fibers $\varepsilon_1 = \text{const}$ and maps the coordinate submanifold $\varepsilon_2 = 0$ to itself, so $g(\varepsilon_1,\ 0) = (\varepsilon_1,\ 0)$. Hence $W'' = W$, and we are done: $\mathbf{G} \circ \mathbf{H}$ is the moderate equivalence we are looking for.

    Both $\mathbf{H}$ and $\mathbf{G}$ are equal to the identity map on $0 \times U_1' \sqcup U_2'$. Hence the~same holds for $\mathbf{G} \circ \mathbf{H}$.
\end{proof}

\subsection{Proof of the Main theorem \ref{main}}
\label{subsec:proof-main}

Both Main theorems easily follow from theorem \ref{ST2} and the Splitting lemma.

    \begin{proof}
    Choose two smooth cut-functions $\{\widetilde{\varphi_1},\ \widetilde{\varphi_2}\}$ such that $\widetilde{\varphi_j}|_{U_j} \equiv 1$, $j = 1,\ 2$. Then $(V,\ B_1,\ B_2,\ \widetilde{\varphi_1},\ \widetilde{\varphi_2})$ is a splitting data for $V$. Denote by $\widetilde{W}$ the respective splitting of $V$.

    Notice that conditions SET and TD of the Main theorem~\ref{main} and of the~Splitting lemma are exactly the same.
    The Splitting lemma \ref{splitting-of-two} implies that $V$ is moderate equivalent to $\widetilde{W}$ on some open neighborhood $U$, $Z_1 \sqcup Z_2 \subset U \subset U_1 \sqcup U_2$, and this moderate equivalence, say $\mathbf{H}$, is identity on $0 \times U$. More explicitly, we have $\mathbf{H}(\varepsilon,\ x) = \left(h(\varepsilon),\ H_\varepsilon(x)\right)$, and $H_0 = \mathrm{id}$.

    Take open neighborhoods $Z \subset \Omega_1 \ssubset \Omega_s \subset U$ provided by theorem \ref{ST2}. Choose two smooth cut-functions $\{\varphi_1,\ \varphi_2\}$ such that $\supp{\varphi_j} \subset \overline{\Omega_s} \cap U_j$ and $\varphi_j|_{\Omega_1 \cap U_j} \equiv 1$, $j = 1,\ 2$. This gives us a splitting data $(V,\ B_1,\ B_2,\ \varphi_1,\ \varphi_2)$ and the respective splitting $\splt V$ of $V$. By~the~Second stabilization theorem, $\LBS (\splt V) \subset \LBS (V)$.    
    
    Hence $\Omega_1$ is an open neighborhood of $\LBS (\splt V)$. Let $\mathbf{G}$ be the restriction of $\mathbf{H}$ to $B \times \Omega_1$, where $B = B_1 \times B_2$. Then $\mathbf{G}$ is a moderate equivalence between $V$~and~$\splt V$ in a neighborhood of their large bifurcation supports, because $\splt V$~and~$\widetilde{W}$ coincide in $\Omega_1$. Since $\mathbf{H}$ agrees with an orbital topological equivalence between $v_0$ and $w_0$ ($v_0 = w_0$, so this is just the identity map in our case) when $(\varepsilon_1,\ \varepsilon_2) = 0$, the same holds for $\mathbf{G}$. Therefore, $V$ is weakly equivalent to $\splt V$ on $S^2$ by theorem \ref{LBS-main-theorem}. This concludes the proof as this is how the Cartesian product of bifurcations is defined.
    \end{proof}

\subsection{Proof of the Main theorem \ref{main+}}\label{subsec:proof-main+}

\begin{proof}
    Recall that the conditions $(ii-iii)$ of the theorem ensure that there exists a localization of $\mathfrak{M}_j$ to $U_j$, $\widetilde{\mathfrak{M}}_j$, with the SET property.
    
    As $V$ is transversal to $\mathfrak{M}$, it is transversal to each $\mathfrak{M}_j$. Hence, without loss of generality we may assume that the pullback of each $\mathfrak{M}_j \cap V$ to the base of $V$ is a coordinate subspace $\varepsilon_j = 0$, while the base of $V$ is $B_V = \left(\mathbb{R}^{n_1} \times \mathbb{R}^{n_2},\ 0\right)$. Then $V_1 := \left\{v_{(\varepsilon_1,\ 0)} \left|\ \varepsilon_1 \in (\mathbb{R}^{n_1},\ 0)\right.\right\}$ is transversal to $\mathfrak{M}_1$, and $V_2 := \left\{v_{(0,\ \varepsilon_2)} \left|\ \varepsilon_2 \in (\mathbb{R}^{n_2},\ 0)\right.\right\}$ is transversal to $\mathfrak{M}_2$. Hence $\widetilde{V}_1 := V_1|_{U_1}$ is transversal to $\widetilde{\mathfrak{M}}_1$, $\widetilde{V}_2 := V_2|_{U_2}$ is transversal to $\widetilde{\mathfrak{M}}_2$, and so they have the SET property as $U_1$-glocal and $U_2$-glocal families respectively.

    By the condition $(iv)$, $\widetilde{\mathfrak{M}}_j$ is a tamer of $v_0|_{U_j}$ in $\Vect^{*}_k\left(U_j\right)$. It means that $\widetilde{\mathfrak{M}}_j$ contains all structurally unstable vector fields that are orbitally topologically equivalent to $v_0|_{U_j}$ and close to $v_0|_{U_j}$ in $\Vect^{*}_k\left(U_j\right)$. Therefore, $V_1$ is topologically distinguished on $U_2$, $V_2$ is topologically distinguished on~$U_1$.

    Finally, the condition $(i)$ gives us inclusions $Z_j \subset U_j$. Since $Z_j$ are components of $\LBS(V)$, we can apply the Main theorem~\ref{main} to $V$, $V_1$ and $V_2$ (with the TD condition satisfied by $V_2$ and $V_1$ respectively) to deduce that $V$ is indeed a Cartesian product of bifurcations.
\end{proof}

\subsection{Bifurcation diagrams}
\label{subsec:diagrams}

If a glocal family $V = \{v_{(\varepsilon_1,\ \varepsilon_2)} \left|\ (\varepsilon_1,\ \varepsilon_2) \in (B_1 \times B_2,\ 0) \right.\}$ satisfies the~conditions of the Main theorem~\ref{main}, it is a Cartesian product of bifurcations. According to definition~\ref{def:cart} there exist two cut functions $\varphi_1$, $\varphi_2$ such that $\supp \varphi_j \subset U_j$ and $V$ is weakly equivalent on $S^2$ to a split family $W$ corresponding to the splitting data $(V,\ B_1,\ B_2,\ \varphi_1,\ \varphi_2)$. The following theorem tells us more about $W$:
\begin{theorem}\label{th:bd}
    The bifurcation diagram ($\BD$) of the split family $W$ is
    $$
        \BD{(W)} = \BD{(V_1)} \times B_2 \cup B_1 \times \BD{(V_2)},
    $$
    where $V_j$ is the subfamily of $V$ corresponding to $B_j$:
    $$
        \begin{aligned}
            V_1 &= \{v_{(\varepsilon_1,\ 0)} \left|\ \varepsilon_1 \in B_1 \right.\}, \\
            V_2 &= \{v_{(0,\ \varepsilon_2)} \left|\ \varepsilon_2 \in B_2 \right.\}.
        \end{aligned}
    $$
\end{theorem}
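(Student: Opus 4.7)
The plan is to leverage the spatial product structure of the split family $W$ (on $U_1$ the field depends only on $\varepsilon_1$, on $U_2$ only on $\varepsilon_2$, elsewhere $w_\varepsilon \equiv v_0$) together with the stabilization theorems of Section~\ref{subsec:stabilization}.

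I would first identify the coordinate subfamilies $W_1 := \{w_{(\varepsilon_1,\ 0)} \mid \varepsilon_1 \in B_1\}$ and $W_2 := \{w_{(0,\ \varepsilon_2)} \mid \varepsilon_2 \in B_2\}$ with stabilizations of $V_1$ and $V_2$ respectively. Indeed, the splitting formula gives $w_{(\varepsilon_1,\ 0)} = v_0 + \varphi_1 \cdot (v_{(\varepsilon_1,\ 0)} - v_0)$, which is exactly the stabilization of $V_1$ by $\varphi_1$ in the sense of Definition~\ref{def:stab}; moreover $\varphi_1 \equiv 1$ in a neighborhood of $Z_1$ by the construction of $W$ in the proof of the Main theorem~\ref{main}. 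Theorem~\ref{ST1} then yields $\LBS(W_j) = \LBS(V_j) = Z_j$, and since $V_j$ and $W_j$ coincide on the open set $\varphi_j^{-1}(1) \supset Z_j$, the identity is a moderate equivalence in a neighborhood of their common $\LBS$. By the axiomatic property of $\LBS$ (Theorem~\ref{LBS-main-theorem}), this extends to a weak equivalence of $V_j$ and $W_j$ on $S^2$, identifying their bifurcation diagrams as subsets of $B_j$: $\BD(W_j) = \BD(V_j)$.

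The second step is to prove the product decomposition
\[
    \BD(W) = \BD(W_1) \times B_2 \cup B_1 \times \BD(W_2).
\]
For $\supseteq$: if $\varepsilon_1 \in \BD(W_1)$, then arbitrarily close parameters $\varepsilon_1'$ yield $w_{(\varepsilon_1',\ 0)}$ not orbitally topologically equivalent to $w_{(\varepsilon_1,\ 0)}$ on $S^2$; since $w_{(\cdot,\ \varepsilon_2)}$ coincides with $w_{(\cdot,\ 0)}$ on $U_1$ and the topologically distinguishing features sit in $Z_1 \subset U_1$, no change of $\varepsilon_2$ can repair this, whence $(\varepsilon_1,\ \varepsilon_2) \in \BD(W)$ for every $\varepsilon_2$. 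For $\subseteq$: if $\varepsilon_j \notin \BD(W_j)$ for both $j$, I would build an equivalence between $w_{(\varepsilon_1,\ \varepsilon_2)}$ and a nearby $w_{(\varepsilon_1',\ \varepsilon_2')}$ by gluing three pieces: the equivalence on $U_1$ provided by structural stability of $W_1$ at $\varepsilon_1$, the one on $U_2$ from $W_2$ at $\varepsilon_2$, and the identity on $S^2 \setminus (U_1 \sqcup U_2)$, where every field of $W$ equals $v_0$. Combining with the first step gives the theorem.

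The main obstacle is this gluing: the orbital topological equivalences witnessing structural stability of $W_j$ at $\varepsilon_j$ must be arranged to be the identity outside a compact subset of $U_j$, so that they match across $\partial U_j$ with the identity on the complement. This is the essential use of the inclusion $\LBS(W_j) \subset U_j$ together with the fact that outside $U_j$ the family $W_j$ reduces identically to $v_0$ (no topological adjustment is needed there), which is precisely the kind of construction produced en route to Theorem~\ref{LBS-main-theorem} and the First Stabilization Theorem.
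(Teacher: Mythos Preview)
Your route is genuinely different from the paper's, and the obstacle you flag at the end is real and not discharged by the tools you cite. The paper does not glue equivalences at all. For the inclusion $\BD(W) \subseteq \BD(V_1) \times B_2 \cup B_1 \times \BD(V_2)$ (the only one it argues explicitly), it uses the Andronov--Pontryagin criterion: if $\varepsilon_1 \notin \BD(V_1)$ and $\varepsilon_2 \notin \BD(V_2)$, then $v_{(\varepsilon_1,0)}$ and $v_{(0,\varepsilon_2)}$ are structurally stable on $S^2$ and hence carry no non-Andronov elements; since $w_\varepsilon$ equals these fields on $U_1$ and $U_2$ respectively, $w_\varepsilon$ has no non-Andronov elements in $U = U_1 \sqcup U_2$. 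Theorem~\ref{ST2} gives $\LBS(W) \subset \LBS(V) \subset U$, and then Lemmas~\ref{NML} and~\ref{NEL} (applied to $W$) confine every non-hyperbolic singular point, non-hyperbolic cycle, and saddle connection of $w_\varepsilon$ to a small $U_0 \subset U$. Hence $w_\varepsilon$ has no non-Andronov elements on the whole sphere and is structurally stable. No homeomorphisms are patched across $\partial U_j$.

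Your gluing step, by contrast, asks for orbital equivalences that are the identity outside a compact in $U_j$. Neither Theorem~\ref{LBS-main-theorem} nor Theorem~\ref{ST1} produces such objects: the first yields a weak equivalence on all of $S^2$ (with no control on where it is the identity), the second is a statement about $\LBS$, not about compactly supported conjugacies at a fixed parameter. Even if the gluing went through, you would only have shown that $w_\varepsilon$ is equivalent to nearby $w_{\varepsilon'}$ \emph{within the family}, whereas $\varepsilon \notin \BD(W)$ means structural stability in $\Vect^*_k(S^2)$; the paper's use of Andronov--Pontryagin is exactly what closes that gap. Your preliminary step also leans on unproved facts: you invoke Theorem~\ref{ST1} for $V_1$ with the cut function $\varphi_1$, but $\varphi_1$ was chosen (in the proof of Theorem~\ref{main}) from neighborhoods $\Omega_1 \subset \Omega_s$ of $\LBS(V)$ supplied by Theorem~\ref{ST2}, and nothing guarantees these are admissible for Theorem~\ref{ST1} applied to the subfamily $V_1$; nor is the equality $\LBS(V_1) = Z_1$ established anywhere.
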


\begin{example}\label{bd}
    \begin{figure}[ht]
        \centering
        \includegraphics[width=0.5\textwidth]{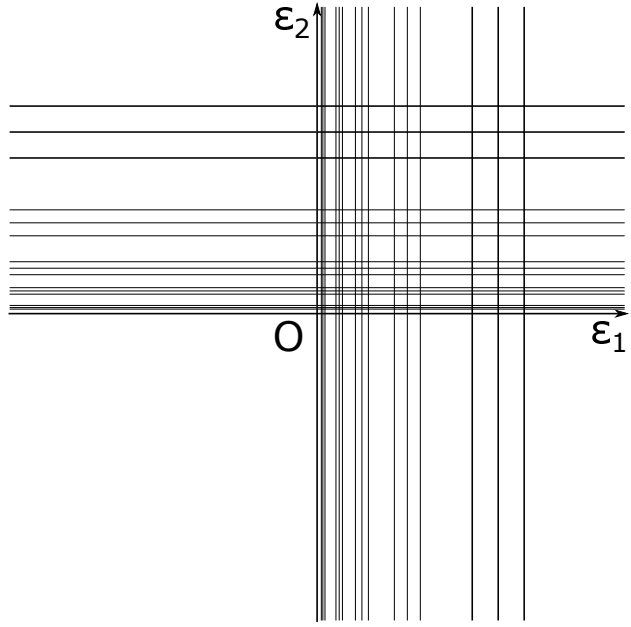}
        \caption{An example of the bifurcation diagram that $W$ might have.}\label{f3}
    \end{figure}

    Look at Fig.~\ref{f3}. This bifurcation diagram of a split family $W$ corresponds to the case when both $V_1$ and $V_2$ are responsible for occurrence of sparkling saddle connections\footnote{\sparkle}, but in~two different domains of $S^2$. The~diagram consists of two independent stacks of lines: the vertical one of $V_1$ ($\BD{(V_1)} \times B_2$) and the horizontal one of $V_2$ ($B_1 \times \BD{(V_2)}$).
\end{example}

\begin{proof}
    Let us fix $\varepsilon = (\varepsilon_1,\ \varepsilon_2) \notin \BD{(V_1)} \times B_2 \cup B_1 \times \BD{(V_2)}$ and prove that $w_{(\varepsilon_1,\ \varepsilon_2)}$ is structurally stable.

    By the condition of the Main theorem~\ref{main}, $\LBS{(V)} = Z_1 \sqcup Z_2$. Recall that $W = \{w_\varepsilon\}$, where
    $$
        w_\varepsilon = v_0 + (v_{(\varepsilon_1,\ 0)} - v_0)\ \varphi_1 + (v_{(0,\ \varepsilon_2)} - v_0)\ \varphi_2,
    $$
    and cut-functions $\varphi_1$, $\varphi_2$ have disjoint supports and are equal to $1$ on $U_1 \supset Z_1$ and $U_2 \supset Z_2$ respectively. Denote by $U := U_1 \sqcup U_2$. It follows from the formula, that $w_{(\varepsilon_1,\ \varepsilon_2)}$ coincides with $v_{(\varepsilon_1,\ 0)}$ on $U_1$ and with $v_{(0,\ \varepsilon_2)}$ on $U_2$. Hence, the restriction of $w_{(\varepsilon_1,\ \varepsilon_2)}$ to $U$ satisfies the Andronov--Pontryagin cryterion or, equivalently, $w_{(\varepsilon_1,\ \varepsilon_2)}$ has no non-Andronov elements (see the~definition in the next section) in $U$.

    By Theorem~\ref{ST2}, $\LBS{(W)} \subset \LBS{(V)} \subset U$. Due to that fact, we can choose a small neighborhood $U_0$ of $\LBS{(W)}$ which satisfies Lemma~\ref{NML}, Lemma~\ref{NEL} and is contained in $U$. All non-hyperbolic cycles and singular points of $W$ (for small values of $\varepsilon$, at~least) are contained in $U_0$ by lemma~\ref{NML}. Analogously, all saddle connections of $w_{(\varepsilon_1,\ \varepsilon_2)}$ can only occur in $U_0$ by lemma~\ref{NEL}. Therefore, $w_{(\varepsilon_1,\ \varepsilon_2)}$ has no no-Andronov elements, and thus is structurally stable.
\end{proof}

\begin{remark}
    The Main theorem~\ref{main} states that $V$ is weakly equivalent to $W$ on~$S^2$. In particular, $\BD{(V)}$ is homeomorphic to $\BD{(W)}$.
\end{remark}

\section{Cartesian products of bifurcations in two-parameter families}
\label{sec:two-param}

\subsection{Non-Andronov elements and disconnected LBS}\label{subsec:non-andronov}

In contrast to the Andronov--Pontryagin criterion, characterizing structurally stable vector fields on $S^2$, let us call a non-Andronov element any of the following orbits:
\begin{itemize}
    \item a non-hyperbolic singular point,
    \item a non-hyperbolic limit cycle,
    \item a common separatrix of two singular points, no matter whether they are hyperbolic or not.
\end{itemize}

\begin{proposition}\label{prop:9}
    Any connected component of the $\LBS$ contains at least one non-Andronov element.
\end{proposition}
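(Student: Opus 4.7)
The plan is to argue by cases on what $K$ contains. If $K$ already contains a non-hyperbolic singular point or a non-hyperbolic limit cycle, then $K$ trivially has a non-Andronov element and we are done; so assume otherwise. Under this hypothesis Definition~\ref{def:elbs} of $\ELBS$ forces $K \subset \overline{A}$, where $A := \{x \in S^2 : \alpha(x) \text{ and } \omega(x) \text{ are both interesting}\}$. The goal is then to produce a saddle connection inside $K$.

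The first step is to find a hyperbolic saddle in $K$. If $K$ contained no singular points at all, then since $K$ is closed and $v_0$-invariant, $\omega(p) \subset K$ for every $p \in K$; by the Poincar\'e--Bendixson theorem on $S^2$ this $\omega$-limit would have to be a limit cycle (no singular points being available in $K$), and by hypothesis hyperbolic. But then $p$ lies in the basin of a hyperbolic attracting cycle, so nearby points share the same non-interesting $\omega$-limit, violating $p \in \overline{A}$. Hence $K$ contains a singular point $p$, necessarily hyperbolic. An analogous argument rules out $p$ being a hyperbolic attractor or repeller, since nearby points would then have $\{p\}$ as their $\omega$- or $\alpha$-limit, which is not interesting. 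Therefore $p$ is a hyperbolic saddle.

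Next, I would analyze an unstable separatrix of $p$. Pick $a_n \to p$ with $a_n \in A$; by the standard sector structure near a hyperbolic saddle and passing to a subsequence, the forward orbits of $a_n$ exit a fixed small neighborhood of $p$ along a single unstable separatrix $\gamma^u$ and cross a fixed transversal $T \pitchfork \gamma^u$ at points $y_n \to y \in \gamma^u$. Set $L := \omega(\gamma^u)$ and split on its Poincar\'e--Bendixson class. If $L$ is a singular point that is not a hyperbolic attractor, then $\gamma^u$ lands at $L$ along the boundary of a hyperbolic sector of $L$ and is therefore a common separatrix of $p$ and $L$, i.e.\ a saddle connection. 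If $L$ is a hyperbolic attractor or hyperbolic cycle, nearby orbits lie in its basin, so $\omega(a_n) = L$ is non-interesting, contradicting $a_n \in A$. If $L$ is a non-hyperbolic limit cycle or a polycycle, it is interesting, the argument via continuous dependence along $T$ gives no contradiction, and $L$ itself already contains a non-Andronov element (the non-hyperbolic cycle, or the saddle connections in the polycycle).

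Finally, I would check that the non-Andronov element produced actually lies in $K$. Here $\gamma^u \subset \Acc(V)$ as a separatrix of $v_0$, and once $L$ is interesting both the $\alpha$- and $\omega$-limits along $\gamma^u$ are interesting, so $\gamma^u \subset A \subset \ELBS$; hence $\gamma^u \subset \LBS$. The set $\{p\} \cup \gamma^u \cup L$ is therefore a connected subset of $\LBS$ containing $p \in K$, so by maximality of the component it lies inside $K$. The most delicate point will be the case $L = $ saddle, because $\omega$-limits are genuinely discontinuous in initial conditions near stable manifolds of saddles; but precisely in this case $\gamma^u$ is already a saddle connection and one never needs to identify $\omega(a_n)$ with $L$, so the discontinuity is harmless. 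A careful bookkeeping of these subcases completes the argument.
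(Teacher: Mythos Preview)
The paper's proof is much shorter. It takes any orbit $\gamma \subset K$, notes that $\alpha(\gamma),\omega(\gamma) \subset K$ since $K$ is closed and $v_0$-invariant, and applies Poincar\'e--Bendixson. The key step is then a fact cited from \citep{Goncharuk-LBS}: $\LBS(V)$ contains no hyperbolic attractors, hyperbolic repellers, or hyperbolic limit cycles. Granting this, each limit set of $\gamma$ is a non-hyperbolic singular point, a non-hyperbolic cycle, a polycycle, or a hyperbolic saddle; the first three already contain non-Andronov elements, and if both limit sets are saddles then $\gamma$ itself is a saddle connection. Your route avoids the cited fact and instead works from the structure of $\ELBS$ via approximating points $a_n \in A$, which is more self-contained but considerably more involved.

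As written there are gaps. First, the assertion ``if $L$ is a non-hyperbolic limit cycle \dots\ it is interesting'' is not true: by Definition~\ref{def:int} a non-hyperbolic cycle can be non-interesting (for instance when one of its complementary disks contains only hyperbolic sinks and sources), and then $\gamma^u \notin A$, so your connectedness argument does not place $L$ inside $K$. You would need to absorb this case into the basin argument instead. Second, the points $a_n \in A$ with $a_n \to p$ need not exit a neighborhood of $p$ along an unstable separatrix: they could sit on a stable separatrix, or simply equal $p$ (every hyperbolic saddle lies in $A$, since $\alpha(p)=\omega(p)=\{p\}$ is interesting). Both issues are repairable by running the analysis over all four separatrices and both time directions, but at that point you are essentially reproving the cited exclusion of hyperbolic sinks, sources and cycles from $\LBS$; quoting that fact directly, as the paper does, collapses the whole argument to a few lines.
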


\begin{proof}
    Let $v_0$ be a field unfolded by a glocal family $V$. Take an orbit $\gamma$ of $v_0$ from a connected component of $\LBS(V)$. Let $A_\gamma$ and $W_\gamma$ be the $\alpha$- and $\omega$-limit sets of $\gamma$ respectively. By Poincare--Bendixon theorem, only singular points, cycles or polycycles can be limit sets of $\gamma$. Note that $A_\gamma$, $W_\gamma$ and $\gamma$ lie in the same connected component of $\LBS(V)$.

    It is proved in~\citep{Goncharuk-LBS} that $\LBS(V)$ contains no hyperbolic attractors and repellers, and no hyperbolic limit cycles. Hence, either one of the limit sets $A_\gamma$ and $W_\gamma$ is a non-hyperbolic singular point, limit cycle or polycycle, and, thus, contains a non-Andronov element, or they both are hyperbolic saddles. In~this case, $\gamma$ is a saddle connection, so it is a non-Andronov element itself.
\end{proof}

\subsection{The basic list}
\label{subsec:basic-list}

It is well-known that only six classes of degeneracies can be encountered unavoidably in generic one-parameter families:

\begin{theorem}[\citep{Sotomayor1974}]\label{th:sotomayor}
In generic one-parameter families on $S^2$ only degeneracies from the following {\bf basic list} are met:

  \begin{itemize}
    \item[$\AH$:]
    A non-hyperbolic singular point with a pair of non-zero pure imaginary eigenvalues such that the first Lyapunov focus value is non-zero;
    \item[$\SN$:]
    A saddle-node singular point of multiplicity exactly two;
    \item[$\HC$:]
    A homoclinic curve of a saddle-node singular point of multiplicity exactly two that enters the saddle node through the interior of the parabolic sector;
    \item[$\SC$:]
    A saddle connection between two different hyperbolic saddles;
    \item[$\SL$:]
    A separatrix loop of a hyperbolic saddle with the characteristic number not equal to $1$;
    \item[$\PC$:]
    A parabolic cycle of multiplicity two.
  \end{itemize}
  
\end{theorem}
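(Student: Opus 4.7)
The plan is to combine the Andronov--Pontryagin criterion for structural stability with Thom's transversality theorem applied to a stratification of the bifurcation locus $\Sigma \subset \Vect^{*}_k(S^2)$. By Andronov--Pontryagin, $\Sigma$ is exactly the set of vector fields carrying at least one non-Andronov element (a non-hyperbolic singular point, a non-hyperbolic cycle, or a saddle connection). The scheme is: (a) stratify $\Sigma$ by type and multiplicity of non-Andronov elements together with their non-degeneracy conditions; (b) identify the codimension-one part $\Sigma^{(1)}$ with the disjoint union of the six classes in the basic list; (c) show that $\Sigma \setminus \Sigma^{(1)}$ is a countable union of locally closed Banach submanifolds of codimension at least two; (d) apply Thom's transversality theorem to the evaluation map from the Banach space of $C^k$ one-parameter families into $\Vect^{*}_k(S^2)$, obtaining a residual set of families whose parameter curve meets only $\Sigma^{(1)}$, transversally, and only at isolated parameter values.

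The enumeration of codim-one strata is a case analysis by type of failing Andronov condition. At a singular point, either the trace vanishes with positive determinant, giving $\AH$ under the open condition of non-vanishing first focus value, or a simple zero eigenvalue appears, giving $\SN$ once the quadratic term on the center manifold is non-degenerate. For a cycle, multiplier one with non-zero second Poincar\'e derivative yields $\PC$. A saddle connection between two distinct hyperbolic saddles yields $\SC$; a loop of a single hyperbolic saddle with characteristic number $\ne 1$ yields $\SL$ (the resonance is excluded because a resonant loop generically produces an additional nearby cycle, raising the codimension). The sixth class, $\HC$, is the saddle-node configuration of multiplicity exactly two whose hyperbolic separatrix returns into the interior of the parabolic sector; here one must work inside the saddle-node stratum and interpret the returning-separatrix condition as the single scalar cutting out $\HC$ among the codim-one phenomena appearing in one-parameter unfoldings. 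In each case the Banach submanifold structure is verified by writing down the defining scalar equations (a jet coefficient at a singular point, a Floquet multiplier, or a separatrix-displacement scalar along a transversal) and checking surjectivity of the differential via compactly supported perturbations that realize arbitrary values of the defining scalar.

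The main obstacle is step (c): proving codimension at least two for every other configuration. For two non-Andronov elements supported on disjoint regions of $S^2$, the defining scalars are independent by perturbations with disjoint supports. For higher order degeneracies of a single element -- vanishing first focus value in an $\AH$-type singularity, saddle-node of multiplicity $\geq 3$, cycle of multiplicity $\geq 3$, saddle loop with characteristic number equal to one -- one computes directly that a further independent scalar condition on the appropriate jet is imposed. For mixed degeneracies sharing support (a saddle connection landing on a non-hyperbolic target, two returning separatrices of the same saddle, etc.) I would pass to Takens-type normal forms and verify independence of the defining scalars in normal-form coordinates. Once $\Sigma \setminus \Sigma^{(1)}$ is shown to have codimension $\geq 2$, Thom transversality applied to the evaluation map on the Banach manifold of $C^k$ one-parameter families yields the residual set asserted in the theorem, and the six classes on the basic list are precisely the ones met.
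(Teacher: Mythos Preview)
The paper does not prove this theorem at all: it is quoted as a classical result of Sotomayor (the citation \verb|\citep{Sotomayor1974}|) and is used only as background for the discussion in Section~\ref{sec:two-param}. There is therefore nothing in the paper to compare your argument against.

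That said, your outline follows the standard Sotomayor strategy and is broadly sound, but your treatment of the $\HC$ class is miscounted. You write that one must ``interpret the returning-separatrix condition as the single scalar cutting out $\HC$''. In fact the single defining scalar for $\HC$ is the saddle-node condition itself (one zero eigenvalue with non-degenerate quadratic part on the center manifold); the requirement that the hyperbolic separatrix return through the interior of the parabolic sector is an \emph{open} condition, not an additional equation, because the parabolic sector has nonempty interior. If you impose the homoclinic return as a scalar equality you would be describing a codimension-two phenomenon (saddle-node plus separatrix landing on the boundary of the parabolic sector), which is not what $\HC$ is. Apart from this miscount, your stratification-plus-transversality scheme is the correct approach and matches Sotomayor's original argument.
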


\subsection{Degeneracies related to the disconnected LBS in two-parameter families}
\label{subsec:two-param}

The following results will be proved in the forthcoming papers by Androsov, Bakiev, Filimonov and Ilyashenko, but on the heuristic level they can be easily explained right here.

\begin{conjecture}\label{th:ABFI}
    In a generic two-parameter glocal family of vector fields with a disconnected $\LBS$ exactly two degeneracies from the basic list occur; each one corresponds to a connected component of the $\LBS$. Moreover, a~so-called ''non-synchronization`` condition holds for each degeneracy of the $\PC$ class.
\end{conjecture}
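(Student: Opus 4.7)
The plan is to prove Conjecture \ref{th:ABFI} by combining a codimension-counting argument with Sotomayor's theorem, applied separately to a neighborhood of each connected component of the $\LBS$. The starting point is Proposition~\ref{prop:9}: each of the two components $Z_1$ and $Z_2$ contains a non-Andronov element. Since $Z_1 \cap Z_2 = \varnothing$, there are disjoint open neighborhoods $U_1 \supset Z_1$ and $U_2 \supset Z_2$; the conditions that produce the non-Andronov element in $U_1$ and the one in $U_2$ are independent, because they are imposed by the behavior of $v_0$ on disjoint parts of $S^2$. Hence the codimensions of these two local degeneracies add.

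Next I would use the genericity assumption. In a generic two-parameter family, the map $\varepsilon \mapsto v_\varepsilon$ is transversal to the stratification of $\Vect^{*}_k(S^2)$ by structurally unstable fields, which forces the total codimension of the degeneracy of $v_0$ to be at most $2$. Since each of the two components already contributes at least codimension $1$ (the presence of a non-Andronov element is itself a codimension-$\geq 1$ condition), the only option is that each component contributes exactly codimension $1$. I would then invoke the local analogue of Sotomayor's Theorem~\ref{th:sotomayor}: a generic codimension-$1$ degeneracy in an open subset of $S^2$ belongs to the basic list $\{\AH,\SN,\HC,\SC,\SL,\PC\}$. Applied to $v_0|_{U_1}$ and $v_0|_{U_2}$, this yields exactly two basic-list degeneracies, one per component. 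A verification is then required that no ``extra'' non-Andronov element hides inside a component beyond the one produced by the basic-list degeneracy; this follows from the codimension budget and the classification of phase portraits near the basic-list strata.

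For the non-synchronization statement, I would interpret it as follows: when one (or both) of the two degeneracies is of class $\PC$, the bifurcation hypersurface in the base of $V$ corresponding to the disappearance of that parabolic cycle should be transversal to the analogous hypersurface of the other component. Equivalently, the two split parameters $\varepsilon_1$ and $\varepsilon_2$ of Definition~\ref{def:split} can be chosen so that each parabolic cycle collapses along its own coordinate axis and not along a diagonal. This follows from the same transversality argument: the linear functionals on $T_0 B$ obtained by linearizing each codimension-$1$ condition are $\mathbb{R}$-independent for a generic family, so after a smooth reparametrization the two $\PC$ bifurcation curves become transversal coordinate axes. Example~\ref{ex7} is precisely the non-generic situation where this fails.

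The main obstacle will be making rigorous the claim that the codimension of $v_0$ splits additively across the two spatial components. The natural framework is the Banach submanifold setup of Section~\ref{subsec:banach}: one expects a tamer $\widetilde{\mathfrak{M}}_j$ of $v_0|_{U_j}$ of codimension $c_j$, and a global tamer $\mathfrak{M}$ that is the transversal intersection $\mathfrak{M}_1 \cap \mathfrak{M}_2$ of the pullbacks to $\Vect^{*}_k(S^2)$, with $\codim \mathfrak{M} = c_1 + c_2$. Establishing this transversality, and ruling out all degenerate interactions (for instance, a saddle connection whose two saddles live in different components, or a common separatrix threading through both $U_1$ and $U_2$), is the delicate point. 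Once combined with the forthcoming classification results attributed in the paper to Androsov, Bakiev, Filimonov and Ilyashenko, these ingredients will close the proof.
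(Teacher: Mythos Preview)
Your approach is essentially the same as the paper's: Proposition~\ref{prop:9} gives a non-Andronov element in each component, then codimension counting forces each to be exactly codimension~$1$, and Sotomayor's theorem identifies it as a basic-list degeneracy. The paper presents exactly this argument, explicitly labeled a \emph{heuristic proof}, and then states outright that it is \emph{not} a rigorous proof because the key step---``a degeneracy of codimension greater than $2$ cannot occur in generic two-parameter families''---is neither precisely formulated nor proved; the rigorous argument is deferred to future work.

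You have correctly identified the same gap yourself (the additivity of codimensions across disjoint spatial regions and the transversality of the corresponding Banach submanifolds), and your proposed resolution via the framework of Section~\ref{subsec:banach} is the natural one. But note that the statement is a \emph{Conjecture} in the paper, not a theorem: the authors do not claim to have a proof, and neither your sketch nor theirs closes the gap. Your discussion of the non-synchronization condition goes somewhat beyond what the paper says (the paper does not even define the condition here), and your interpretation as transversality of the two bifurcation hypersurfaces is reasonable but speculative. In short: your proposal matches the paper's heuristic, elaborates on it usefully, and shares its acknowledged incompleteness.
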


\begin{proof}[Heuristic proof]
    By Proposition~\ref{prop:9} any connected component of $\LBS(V)$ contains at least one non-Andronov element. The codimension of the corresponding degeneracy is one or greater. Suppose that at least one of these degeneracies is outside the basic list. Then its codimension is greater than one, and the total codimension of the degeneracy in $V$ is greater than $2$. But a degeneracy of codimension greater than $2$ cannot occur in generic two-parameter families.
\end{proof}

This is not a rigorous proof because the last statement ''a degeneracy of codimension greater than $2$ cannot occur in generic two-parameter families`` is neither strictly stated nor proved. A rigorous proof of conjecture~\ref{th:ABFI} is a~subject of future work.

\begin{corollary}
    Let $V$ be a generic two-parameter glocal family of vector fields with a disconnected $\LBS$ $Z = Z_1 \sqcup Z_2$, $U_1 \supset Z_1$ and $U_2 \supset Z_2$ are disjoint open neighborhoods. Then for each $j = 1,\ 2$:
    \begin{itemize}
        \item the restriction of $V$ to $U_j$ is an extension of some generic one-parameter $U_j$-glocal family;
        \item $U_j$ contains only one non-Andronov element corresponding to a degeneracy from the basic list, and the ''non-synchronization`` condition holds for each degeneracy of the $\PC$ class.
    \end{itemize}
\end{corollary}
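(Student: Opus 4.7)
The plan is to obtain the corollary as a short consequence of Conjecture~\ref{th:ABFI}, supplemented by two transversality/genericity arguments.

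First, I would apply Conjecture~\ref{th:ABFI} to $V$: it supplies exactly two degeneracies from the basic list of Theorem~\ref{th:sotomayor}, one sitting in $Z_1$ and the other in $Z_2$, plus the non-synchronization condition at every $\PC$-type degeneracy. Since $U_1$ and $U_2$ are disjoint and contain $Z_1$ and $Z_2$ respectively, each $U_j$ contains at most one such basic-list degeneracy. To conclude the second bullet I then need to verify that $U_j$ contains no further non-Andronov element of $v_0$. I would deduce this from a small auxiliary claim: for a generic two-parameter $V$, every non-Andronov element of $v_0$ lies in $\LBS(V)$, and hence in $Z_1 \sqcup Z_2$. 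Combined with disjointness of the $U_j$ and the containment $Z_j \subset U_j$, this pins the basic-list degeneracy in $U_j$ as the unique non-Andronov element there, and the non-synchronization clause is inherited directly from Conjecture~\ref{th:ABFI}.

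Second, I would derive the first bullet from transversality of $V$ to the Banach stratum $\mathfrak{M}_j \subset \Vect^{*}_k(S^2)$ parametrising fields with the given basic-list degeneracy at the orbit supporting $Z_j$. Because the degeneracy is detected by the restriction of a field to any neighbourhood of that orbit, $\mathfrak{M}_j$ admits a $U_j$-localization $\widetilde{\mathfrak{M}}_j$ of the same codimension one, and $V|_{U_j}$ is transversal to $\widetilde{\mathfrak{M}}_j$ in $\Vect^{*}_k(U_j)$. I would then choose coordinates $(\varepsilon_1,\ \varepsilon_2)$ on the base such that the pullback of $\widetilde{\mathfrak{M}}_j$ is the hyperplane $\{\varepsilon_j = 0\}$; the one-parameter subfamily $\{v_{(\varepsilon_1,\varepsilon_2)}|_{U_j} \mid \varepsilon_{3-j} = 0\}$ then transversally unfolds the unique codim-one degeneracy of $v_0|_{U_j}$, is therefore a generic one-parameter $U_j$-glocal family, and $V|_{U_j}$ is a two-parameter extension of it by the auxiliary parameter $\varepsilon_{3-j}$.

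The main obstacle is the auxiliary claim that every non-Andronov element of the generic $v_0$ lies in $\LBS(V)$. This is plausible in view of Definition~\ref{def:int}: hyperbolic saddles are interesting limit sets, so any hyperbolic saddle connection sits in $\ELBS$ and in $\operatorname{Acc}(V)$, and non-hyperbolic singular points or cycles are automatically in $\ELBS$. Making this rigorous, however, requires genericity of $V$ as a two-parameter family to exclude pathological common separatrices not captured by $\operatorname{Acc}(V)$, and a definitive treatment depends on the same forthcoming papers cited for Conjecture~\ref{th:ABFI}.
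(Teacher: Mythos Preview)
The paper gives no separate proof of this corollary; it is stated immediately after Conjecture~\ref{th:ABFI} as a direct consequence, with the details left implicit. Your proposal supplies those details and is essentially correct, with one unnecessary worry.

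The auxiliary claim you flag as the main obstacle---that every non-Andronov element of $v_0$ lies in $\LBS(V)$---holds for \emph{every} glocal family, not just generic ones, and needs no forthcoming work. Non-hyperbolic singular points and limit cycles are in $\ELBS(v_0)$ by the first clause of Definition~\ref{def:elbs}, and they lie in $\operatorname{Sing}(v_0)\cup\operatorname{Per}(v_0)\subset\operatorname{Sing}(v_0)\cup\operatorname{Acc}(V)$. A common separatrix of two singular points has both its $\alpha$- and $\omega$-limit sets equal to singular points carrying a hyperbolic sector, hence interesting in the sense of Definition~\ref{def:int}; so the separatrix lies in $\ELBS(v_0)$, and since $\operatorname{Sep}(v_0)\subset\operatorname{Acc}(V)$ it lies in $\LBS(V)$ by Definition~\ref{def:cLBS}. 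With this in hand, Conjecture~\ref{th:ABFI} yields the second bullet immediately.

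Your transversality argument for the first bullet is in the spirit of Section~\ref{subsec:banach} and is the natural route. The only caveat is that the existence of the codimension-one Banach strata for the basic-list degeneracies and the transversality of a generic two-parameter $V$ to them are part of the same forthcoming work that underlies Conjecture~\ref{th:ABFI}; so the dependence on unpublished results is not eliminated, merely relocated from the second bullet (where you thought it lived) to the first.
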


\begin{theorem}[\citep{Androsov}]\label{th:androsov}
    Generic one-parameter $U$-glocal families of vector fields, where $U$ is a domain on $S^2$, have the SET property.
\end{theorem}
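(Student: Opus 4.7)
The plan is to reduce to a case-by-case analysis over the six classes in Sotomayor's basic list (Theorem~\ref{th:sotomayor}) and, in each case, absorb the auxiliary parameter $\varepsilon_1$ via a reparametrization of the base combined with a fibrewise orbital topological equivalence. First I would fix a generic one-parameter $U$-glocal family $V_U = \{v_{\varepsilon_0}\}$ and observe that, by genericity, $\LBS(V_U)$ contains exactly one non-Andronov element of the field $v_0|_U$, and this element belongs to one of the classes $\AH$, $\SN$, $\HC$, $\SC$, $\SL$, $\PC$. Let $W = \{w_{(\varepsilon_0,\varepsilon_1)}\}$ be an arbitrary glocal extension. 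By genericity, $W$ is transversal in $\Vect^{*}_k(U)$ to the codimension-one bifurcation submanifold associated with this degeneracy, so the locus $\Sigma \subset B_0 \times B_1$ where the degeneracy persists is a smooth codimension-one submanifold transverse to the fibers $\varepsilon_1 = \mathrm{const}$.

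Next I would associate to each degeneracy a canonical one-dimensional modulus $F$ defined on a neighborhood of $v_0|_U$ in $\Vect^{*}_k(U)$ whose vanishing locus is exactly the codimension-one stratum: the real part of the eigenvalue for $\AH$, the saddle-node modulus for $\SN$, the usual split function for $\SC$ and $\SL$ (suitably adapted for $\HC$), and a Poincaré-map based multiplier for $\PC$. The composition $F(w_{(\varepsilon_0,\varepsilon_1)})$ is smooth in $(\varepsilon_0,\varepsilon_1)$ and, by transversality, has nonzero $\varepsilon_0$-derivative at the origin. Then I would define a homeomorphism $h \colon B_0 \times B_1 \to B_0 \times B_1$ of the form $h(\varepsilon_0,\varepsilon_1) = (\widetilde{h}(\varepsilon_0,\varepsilon_1),\varepsilon_1)$, where $\widetilde{h}$ is chosen so that $F(w_{h(\varepsilon_0,\varepsilon_1)}) = F(v_{\varepsilon_0})$; this is possible by the implicit function theorem and immediately gives that on $\Sigma$ the family $w \circ h$ already agrees, modulo orbital topological equivalence, with the trivial extension $\widetilde{w}_{(\varepsilon_0,\varepsilon_1)} = v_{\varepsilon_0}$ restricted to $U$.

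To upgrade this to a moderate equivalence on a neighborhood $M$ of $\LBS(W) \cap U$ with $H_0 = \mathrm{id}$, I would construct the fibrewise equivalences $H_{(\varepsilon_0,\varepsilon_1)}$ in two steps. Away from $\Sigma$ the restriction to $U$ is structurally stable, so the standard Andronov--Pontryagin machinery produces an orbital equivalence between $w_{h(\varepsilon_0,\varepsilon_1)}|_U$ and $v_{\varepsilon_0}|_U$ that depends continuously on parameters on the complement of $\Sigma$. Near $\Sigma$, I would use the versal-unfolding normal form for the specific degeneracy in question to build the equivalence explicitly — e.g.\ the Andronov--Hopf normal form for $\AH$, the Dulac correspondence map between flows for $\SL$ and $\SC$, and the Poincaré-map conjugacy together with the extension to a tubular neighborhood for $\PC$ — and match it with the structurally-stable piece along $\Sigma$. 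The continuity requirement on $C_\partial(W) \cap (B \times M)$ is then checked stratum by stratum, using that the separatrix skeleton deforms continuously along each connected component of the complement of $\Sigma$ and limits correctly onto $\Sigma$.

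The main obstacle is the $\PC$ case, and, to a lesser extent, the $\HC$ case. For $\PC$ the bifurcation is accompanied by the sparkling-saddle-connection phenomenon mentioned in the introduction: near the degenerate cycle, the extended family exhibits an infinite cascade of saddle connections accumulating onto the cycle, and this cascade must be matched consistently with the one produced by the trivial extension. Verifying that the fibrewise equivalences can be chosen to be continuous on the $\operatorname{Acc}$ set through this accumulation, while still respecting $H_0 = \mathrm{id}$ and the fiber-preserving form of $h$, requires a careful analysis of the Poincaré return map and of how the separatrices of nearby hyperbolic saddles wind onto the cycle. For $\HC$ the analogous difficulty is the matching of the homoclinic orbit through the parabolic sector of the saddle-node; this is handled by combining the local saddle-node normal form with the Dulac-type correspondence along the global part of the homoclinic. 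Once these two cases are treated, the remaining four are comparatively routine because the relevant non-Andronov element is either purely local ($\AH$, $\SN$) or involves only finitely many regular separatrices ($\SC$, $\SL$).
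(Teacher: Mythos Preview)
The paper does not contain a proof of Theorem~\ref{th:androsov}. It is stated as a result of the forthcoming work \citep{Androsov} and is invoked only as an input to the programme outlined in Section~\ref{sec:two-param}; no argument for it is given here, not even a heuristic one. Hence there is no ``paper's own proof'' against which your proposal can be compared.

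That said, your outline is broadly in the spirit of how one expects such a result to be established: reduce to the six classes of the basic list (Theorem~\ref{th:sotomayor}), use transversality to a codimension-one Banach submanifold to straighten the bifurcation locus $\Sigma$ in the extended base via a fibre-preserving reparametrization $h(\varepsilon_0,\varepsilon_1)=(\widetilde h(\varepsilon_0,\varepsilon_1),\varepsilon_1)$, and then build the fibrewise orbital equivalences from normal forms near $\Sigma$ glued to Andronov--Pontryagin equivalences away from $\Sigma$. You have also correctly identified the genuinely delicate points, namely the continuity of $\mathbf H$ on $C_\partial(W)$ in the $\PC$ case (because of accumulating separatrices and sparkling saddle connections) and in the $\HC$ case (because of the interaction of the homoclinic orbit with the parabolic sector). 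Those are exactly the places where a full proof requires substantial work, and your sketch does not yet supply it; but since the present paper offers nothing to compare with, this is all that can be said here.
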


Conjectures~\ref{th:ABFI}~and~\ref{th:androsov} allow us to apply the Main theorem~\ref{main} to generic two-parameter families with a disconnected~$\LBS$, and thus to prove conjecture~\ref{th2}.

\section{Proof of the stabilization theorems}\label{sec:proof-stab}

\subsection{Properties of LBS}
\label{subsec:properties}

We say that a subset $Z$ of $S^2$ admits an arbitrary small open neighborhood with some property, if any open neighborhood $\Omega$ of $Z$ contains a strictly smaller open neighborhood $\Omega' \ssubset \Omega$ of $Z$ with this property.

The next four lemmas are based on the results from \citep{Goncharuk-LBS}. They are a bit reformulated, because in this paper a different notation is adopted. We will use those lemmas to get some nice neighborhoods of $\LBS (V)$ and deduce the~stabilization theorems.

\begin{lemma}[No cycles of mixed location, proposition 4.11 from \citep{Goncharuk-LBS}]\label{NML}
    For any $n$-parameter glocal family $V$ of vector fields there exists an arbitrary small open neighborhood $\Omega$ of $\LBS (V)$ for which one can find an open neighborhood of zero $B \subset \mathbb{R}^n$ s.t. for all $\varepsilon \in B$
    \begin{itemize}
        \item each singular point of $v_{\varepsilon}$ is either inside $\Omega$, or belongs to a continuous family $P_{\varepsilon}$, $\varepsilon \in (B_p, 0)$, of hyperbolic singular points of $v_{\varepsilon}$ such that $P_0 \notin L B S(V)$;
        \item each limit cycle of $v_{\varepsilon}$ is either inside $\Omega$, or belongs to a continuous family $c_{\varepsilon}$, $\varepsilon \in (B_c, 0)$, of hyperbolic limit cycles of $v_{\varepsilon}$ such that $c_0$ does not belong to $L B S(V)$.
    \end{itemize}
\end{lemma}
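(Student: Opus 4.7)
\smallskip
My plan is to first reduce the statement to showing that hyperbolic elements of $v_0$ lying outside $\Omega$ persist as hyperbolic continuations, and that no ``new'' singular points or limit cycles of $v_\varepsilon$ appear outside $\Omega$. The first reduction relies on the observation that $\ELBS(v_0)$ already contains every non-hyperbolic singular point and every non-hyperbolic limit cycle of $v_0$; combined with the inclusions $\operatorname{Sing}(v_0)\subset\operatorname{Sing}(v_0)\cup\Acc(V)$ and $\operatorname{Per}(v_0)\subset\Acc(V)$ (because each limit cycle of $v_0$ lies in $\overline{\operatorname{Per} V}\cap\{\varepsilon=0\}$), this yields that every non-hyperbolic singular point and every non-hyperbolic limit cycle of $v_0$ lies in $\LBS(V)\subset\Omega$. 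Hence $S^2\setminus\overline{\Omega}$ contains only finitely many hyperbolic singular points $p_1,\dots,p_k$ and hyperbolic limit cycles $c_1,\dots,c_l$ of $v_0$. By the standard implicit function / Poincaré-return-map persistence, for each $p_i$ there is a small open disk $D_i\ni p_i$ and a neighborhood $B_{p_i}\ni 0$ such that $v_\varepsilon$ has a unique zero $P_\varepsilon^{(i)}\in D_i$ for $\varepsilon\in B_{p_i}$, and $P_\varepsilon^{(i)}$ is hyperbolic; similarly each $c_j$ admits an annular neighborhood $A_j$ and $B_{c_j}\ni 0$ with a unique hyperbolic continuation $c_\varepsilon^{(j)}\subset A_j$.

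The substantive step is to prove: after shrinking $\Omega$ slightly if necessary to a neighborhood $\Omega'$ with $\LBS(V)\subset\Omega'\subset\Omega$, there exists a neighborhood $B\ni 0$ in the parameter space such that for every $\varepsilon\in B$, every singular point and every limit cycle of $v_\varepsilon$ not contained in $\Omega'$ lies in $\bigcup_i D_i\cup\bigcup_j A_j$. For singular points this is immediate: the set $K:=S^2\setminus\bigl(\Omega'\cup\bigcup_i D_i\bigr)$ is compact and $v_0$ has no zero there, so by uniform continuity $v_\varepsilon$ has no zero on $K$ for $\varepsilon$ in a small neighborhood of $0$. Any singular point of $v_\varepsilon$ outside $\Omega'$ then lies in some $D_i$ and equals the continuation $P_\varepsilon^{(i)}$, whose germ provides the required family with $P_0^{(i)}\notin\LBS(V)$ (since $P_0^{(i)}\in S^2\setminus\overline{\Omega}$).

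For limit cycles I would argue by contradiction: if the claim fails, there is a sequence $\varepsilon_n\to 0$ and limit cycles $c_n$ of $v_{\varepsilon_n}$ lying in $S^2\setminus\bigl(\Omega'\cup\bigcup_j A_j\bigr)$. By compactness of the Hausdorff metric on closed subsets of $S^2$, a subsequence of $(c_n)$ Hausdorff-converges to a nonempty compact $v_0$-invariant set $L\subset S^2\setminus\Omega'$. By the Poincaré--Bendixson theorem, $L$ is either a singular point, a limit cycle, or a polycycle of $v_0$. In the first case $L$ must be hyperbolic (since $L\cap\LBS(V)=\varnothing$), but a hyperbolic singular point cannot be the Hausdorff limit of closed orbits that stay bounded away from it after the implicit-function-theorem neighborhood is excluded. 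The limit-cycle case gives $L$ hyperbolic and hence $c_n\subset A_j$ for large $n$ by uniqueness of the continuation, contradicting the choice of $c_n$.

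The main obstacle is therefore the polycycle case, which I would resolve by showing directly that any polycycle $L$ of $v_0$ lies inside $\LBS(V)$, contradicting $L\subset S^2\setminus\Omega'$. Indeed, each saddle vertex of $L$ belongs to $\operatorname{Sing}(v_0)$, and each separatrix arc in $L$ belongs to $\operatorname{Sep}(v_0)\cap\{\varepsilon=0\}\subset\Acc(V)$, while the cycles $c_n$ accumulating on $L$ place $L$ in $\overline{\operatorname{Per} V}\cap\{\varepsilon=0\}\subset\Acc(V)$ as well. Moreover every point $x$ on a separatrix arc of $L$ has its $\alpha$- and $\omega$-limit sets equal to hyperbolic saddles of $v_0$, which are ``interesting'' in the sense of Def.~\ref{def:int} (a saddle is singular but neither a hyperbolic attractor nor a hyperbolic repeller), so $x\in\ELBS(v_0)$; the saddle vertices themselves are in $\ELBS(v_0)$ by closure. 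Hence $L\subset\ELBS(v_0)\cap(\operatorname{Sing}(v_0)\cup\Acc(V))=\LBS(V)$, the desired contradiction. The cycle $c_n$ must therefore belong to some $A_j$ for $n$ large and equal the continuation $c_{\varepsilon_n}^{(j)}$, completing the proof.
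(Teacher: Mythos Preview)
The paper does not prove this lemma: it is quoted verbatim as Proposition~4.11 of \cite{Goncharuk-LBS} and is only used, never argued. So there is no ``paper's own proof'' to compare with; your sketch is an independent attempt, and its overall strategy (persistence of hyperbolic elements outside $\Omega$, then a Hausdorff-limit/Poincar\'e--Bendixson trichotomy to exclude stray cycles) is the natural one and is essentially correct.

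There is one genuine slip in the contradiction step for cycles. You negate the assertion ``every limit cycle of $v_\varepsilon$ not contained in $\Omega'$ lies in some $A_j$'' as ``there exist $c_n\subset S^2\setminus(\Omega'\cup\bigcup_j A_j)$''. That is too strong: the correct negation only gives $c_n\not\subset\Omega'$ and $c_n\not\subset A_j$ for every $j$; the cycle $c_n$ may well cross $\partial\Omega'$. Consequently you cannot claim $L\subset S^2\setminus\Omega'$, nor that $L\cap\LBS(V)=\varnothing$ a priori. The repair is easy and in fact already implicit in your polycycle paragraph: in each branch of the trichotomy show instead that $L\subset\Omega'$ (non-hyperbolic singular point, non-hyperbolic cycle, polycycle all land in $\LBS(V)\subset\Omega'$) or that $L$ equals one of the hyperbolic $c_j$; Hausdorff convergence then forces $c_n\subset\Omega'$ or $c_n\subset A_j$ for large $n$, contradicting the negation. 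For the singular-point branch, split into hyperbolic (no nearby closed orbits for $v_\varepsilon$, as you say) versus non-hyperbolic (hence in $\LBS(V)\subset\Omega'$), rather than invoking $L\cap\LBS(V)=\varnothing$.

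Two smaller points worth making explicit. First, to list the finitely many hyperbolic cycles $c_1,\dots,c_l$ lying entirely outside $\overline\Omega$ you are tacitly using that $v_0\in\Vect^{*}_k(S^2)$ has only finitely many limit cycles and that hyperbolic ones are disjoint from $\LBS(V)$ (stated in \cite{Goncharuk-LBS} and recalled in the paper); this is what lets you choose $\Omega$ small enough that no hyperbolic cycle meets $\overline\Omega$. Second, in the polycycle case you should say why the vertices of $L$ cannot be hyperbolic attractors or repellers (so that they are ``interesting'' in the sense of Def.~\ref{def:int}): a small ball around a hyperbolic sink of $v_0$ remains absorbing for $v_{\varepsilon_n}$, so a closed orbit $c_n$ that enters it could never leave --- hence $c_n$ avoids that ball and the sink cannot lie in $L$; dually for sources. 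With these adjustments the argument is complete.
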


\begin{definition}[Definition 4.6 from \citep{Goncharuk-LBS}, corrected]
    $$\LBS^* (V) = \LBS (V) \setminus \{\text{non-interesting cycles and singular points of } v_0\}.$$
\end{definition}

There is a technical reason why we need $\LBS^*$ -- it is much easier to formulate next lemmas for $\LBS^*$. Let us state without any explanation that some degenerate limit cycles can be non-interesting even though they bifurcate. As for degenerate equilibria, we consider non-interesting only Andronov--Hopf singular points.

An intuition behind such a division is the following. Non-interesting cycles and singular points, when they bifurcate, do not interact with separatrices winding around them. In other words, we don't consider them interesting limit sets. This is a manual exclusion which does not let us treat them the~same way as all other components of $\LBS$. The real purpose of this exclusion is, of~course, not to make our lives harder, but to shrink $\LBS$ as much as possible.

\begin{lemma}[No-entrance lemma, a combination of definition 4.9 and lemma 4.10 from \citep{Goncharuk-LBS}]\label{NEL}
    For any glocal family $V = \{\left. v_\varepsilon \right|\ \varepsilon \in (\mathbb{R}^n,\ 0)\}$, $\LBS^* (V)$ admits an arbitrary small open neighborhood $\Omega$ with the no-entrance property: there exists an open neighborhood of zero $B \subset \mathbb{R}^n$ s.t. for any $\varepsilon \in B$ no unstable separatrix of $v_\varepsilon$ enters $\Omega$, and no stable separatrix of $v_\varepsilon$ quits $\Omega$ when time increases. Or, equivalently, for any $\varepsilon \in B$ each separatrix of $v_\varepsilon$ either does not intersect $\partial \Omega$, or emanates either in forward or backward time from a~singular point inside $\Omega$ and intersects $\partial \Omega$ at a single point.
\end{lemma}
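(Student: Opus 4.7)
The plan is to construct an arbitrarily small open neighborhood $\Omega$ of $\LBS^*(V)$ whose boundary $\partial\Omega$ is a finite union of smooth arcs transverse to $v_0$ at all non-singular points, and then use persistence of transversality to push the no-entrance property to nearby $v_\varepsilon$. The construction is local in nature: for each connected component of $\LBS^*(V)$ we assemble $\partial\Omega$ from standard pieces dictated by the local model of $v_0$.

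First, fix any open neighborhood $\Omega_0 \supset \LBS^*(V)$ inside which we will build $\Omega$. Because $\LBS^*(V)$ is $v_0$-invariant, and, by the very definition of $\ELBS$ and $\LBS$, its components are made out of (i) non-hyperbolic singular points, (ii) interesting (non-hyperbolic) limit cycles, and (iii) orbits whose $\alpha$- and $\omega$-limit sets are interesting, it suffices to choose the following local pieces and to glue them. Around each non-hyperbolic singular point in $\LBS^*$ take a small closed disk obtained from the topological normal form (e.g. the Andronov--Hopf, saddle-node, or more degenerate model), so that its boundary circle is transverse to $v_0$ outside finitely many corner points; around each interesting non-hyperbolic cycle take a narrow annulus bounded by two cross-sections transverse to the flow; around each hyperbolic saddle whose separatrices participate in $\LBS^*$ take a small rectangular box inside the linearizing chart, with each of the four sides transverse to $v_0$; finally, along any $v_0$-separatrix segment that belongs to $\LBS^*$ and connects two such pieces, slide a thin flow-box whose lateral boundary is transverse to $v_0$.

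Next, glue these pieces into a single open set $\Omega$ with $\LBS^*(V) \subset \Omega \subset \Omega_0$. The resulting $\partial\Omega$ is a finite union of smooth transverse arcs meeting at corners that lie outside $\LBS^*$; at each non-corner point $p \in \partial\Omega$ the vector $v_0(p)$ is either strictly inward or strictly outward. Using these local models, one now checks the equivalent formulation of the lemma piece by piece at $\varepsilon = 0$: any separatrix of $v_0$ that meets $\partial\Omega$ does so transversally exactly once, and the singular endpoint responsible for it lies inside $\Omega$. Here one invokes the classification built into the definitions of "interesting" and $\ELBS^*$: the hyperbolic saddles outside $\Omega$ whose separatrices could potentially enter $\Omega$ would have to accumulate to an interesting set, contradicting that their non-interesting components were manually excluded from $\LBS^*$.

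For the parameter-dependent statement, I would use a standard persistence argument. By Lemma~\ref{NML}, for $\varepsilon$ in a sufficiently small neighborhood of $0$ every singular point and every limit cycle of $v_\varepsilon$ outside $\Omega$ is hyperbolic and depends continuously on $\varepsilon$; in particular, no new singular points appear outside $\Omega$. Since $\partial\Omega$ is transverse to $v_0$ outside its corners and the corners themselves lie in the complement of $\LBS^*$, the same transversality persists for $v_\varepsilon$ with $\varepsilon$ small. Combining this with continuous dependence of separatrices of hyperbolic saddles and the local normal forms at non-hyperbolic pieces inside $\Omega$ yields the no-entrance property on some $B \ni 0$. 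The main obstacle, which I expect to be the most delicate step, is the gluing and the direction bookkeeping at the non-hyperbolic pieces: one must ensure that, when a flow-box along a separatrix is attached to, say, a parabolic sector of a saddle-node or to an Andronov--Hopf disk, the inward/outward orientations of $v_0$ along $\partial\Omega$ match globally, and that no stable separatrix of $v_\varepsilon$ can escape through a point where $v_0$ points outward. This compatibility is what ultimately forces us to work with $\LBS^*$ rather than $\LBS$ itself, since the excluded non-interesting cycles and Andronov--Hopf points are precisely those whose separatrices would otherwise violate the desired pattern on $\partial\Omega$.
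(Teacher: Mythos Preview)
The paper does not contain a proof of this lemma at all: it is explicitly stated as ``a combination of definition 4.9 and lemma 4.10 from \citep{Goncharuk-LBS}'' and is simply quoted as a known result from that reference. There is therefore no proof in the present paper to compare your proposal against; the authors use the No-entrance lemma as a black box in the proofs of the stabilization theorems.

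As for the substance of your sketch, the overall architecture (build $\partial\Omega$ out of transverse pieces near each type of component of $\LBS^*$, verify the no-entrance property for $v_0$, then perturb) is the natural strategy and is presumably close to what is done in the cited paper. However, two steps are not yet under control. First, your verification at $\varepsilon=0$ that no separatrix of a saddle \emph{outside} $\Omega$ can enter $\Omega$ appeals to the exclusion of non-interesting components from $\LBS^*$, but the logic is inverted: a separatrix $\gamma$ of an exterior saddle that accumulates on an interesting set inside $\Omega$ need not lie in $\ELBS(v_0)$ (its $\alpha$-limit, the saddle itself, may well be non-interesting), so you cannot conclude that $\gamma$ ``should have been'' in $\LBS^*$. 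The actual obstruction is that such a $\gamma$ would lie in $\Acc(V)$ and hence force its exterior saddle into $\LBS(V)$, contradicting the choice of $\Omega$; you need to invoke the $\Acc$ part of the definition, not just $\ELBS$. Second, the persistence step is more delicate than ``transversality persists'': for $\varepsilon\neq 0$ new separatrices can appear (e.g.\ from a saddle-node splitting into a saddle and a node), and you must argue that these, too, respect the no-entrance pattern. This requires genuine use of the local bifurcation models inside $\Omega$, not just transversality of $\partial\Omega$.
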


\subsection{Interesting and non-interesting parts of the LBS. Choice of neighborhoods}
\label{subsec:choice}

    Let us, first of all, choose $\hat{\Omega} \subset \Omega$ applying lemma \ref{NML} to $\Omega$ and $V$.

    Let $V = \{v_\varepsilon \mid \varepsilon \in B\}$ be the same as in assumption of either one of the stabilization theorems. Let $W = \{w_\varepsilon \mid \varepsilon \in B \}$  be either $\stab V$ or $\splt V$. We already know that $W$ is glocal. As $v_0 = w_0$, we have: $ \ELBS(v_0) = \ELBS(w_0)$. So according to definition \ref{def:cLBS} of the $\LBS$ we only need to prove that
    $$
    \ELBS (v_0) \cap \operatorname{Acc}{(W)} = \ELBS (v_0) \cap \operatorname{Acc}{(V)},
    $$
    for some choice of nested open neighborhoods of $\LBS(V)$. Denote by $Z^*_V \ (Z^0_V)$ the interesting (non-interesting) part of $Z = \LBS(V)$.
   
    Denote by $\Omega^0 \subset \hat{\Omega}$ a neighborhood of $Z^0_V$ bounded by curves without contact, and $\Omega_0 \ssubset \Omega^0$ -- a similar but smaller neighborhood of $Z^0_V$.

    Denote by $\Omega^* \ssubset \hat{\Omega}$ a neighborhood of $Z^*_V$ that satisfies the No-entrance lemmas. Let $\Omega_* \ssubset \Omega^*$ be a similar neighborhood.
    
    \begin{remark}\label{NML-rem}
        Observe that the margin $\overline{\Omega^*} \setminus \Omega_*$ (and, in particular, $\partial \Omega^*$) contains no singular points and intersects no limit cycles of $v_0$ because we have already used lemma \ref{NML}. This holds also for $v_{\varepsilon}$ when $\varepsilon$ is small enough.
    \end{remark}
    
    Let $\Omega_1$ be the union $\Omega_0 \cup \Omega_*$ and $\Omega_s$ (index $s$ for support) be the union $\Omega^0 \cup \Omega^*$. Recall that the cut function $\varphi$ used to define $W$ have the following properties: $\supp \varphi \subset \overline{\Omega_s}$ and $\varphi \equiv 1$ on $\Omega_1$. Note that $V \equiv W$ in $\Omega_1$, and $w_\varepsilon \equiv w_0 = v_0$ outside $\Omega_s$. We use $\Omega_1 \ssubset \Omega_s$ to formulate the stabilization theorems, but we do not refer to those neighborhoods in the proof below. Instead, we operate with $\Omega_0 \ssubset \Omega^0$ and $\Omega_* \ssubset \Omega^*$.

    The relation
    $
    Z^0_V = Z^0_W
    $
    is trivial and does not even depend on how~$W$ is constructed (on the choice of neighborhoods) as long as $v_0 = w_0$. By~the~definition, all non-interesting non-hyperbolic limit cycles and Andronov--Hopf singular points of $v_0 = w_0$ belong to both $\LBS (V)$ and $\LBS (W)$.

    Let us now prove the relation
    $
    Z^*_V = Z^*_W
    $
    or, equivalently, that
    $$
    (\Acc(W) \setminus \Omega_0) \cap \ELBS (v_0) = (\Acc(V) \setminus \Omega_0) \cap \ELBS (v_0).
    $$
    Surprisingly, the inclusion $Z^*_V \subset Z^*_W $ is much easier to prove than the inverse one. In what follows, we do not write that we choose a convergent subsequence from a sequence we consider, but rather assume that the sequence is convergent itself.

\subsection{Proof of the simple inclusion: the LBS of the original family belongs to the LBS of a stabilization}
\label{subsec:simple}

In this subsection $W = \{w_\varepsilon \mid \varepsilon \in B\} = \stab V$.

\begin{proof}
By~definition \ref{def:cLBS}, it is sufficient to prove that
$$
\Acc{(\stab V)} \cap \ELBS(v_0) \supset \Acc{(V)} \cap \ELBS(v_0).
$$

\proofpart{1}{$\overline{\operatorname{Per} V} \cap\{\varepsilon=0\} \cap \ELBS (v_0) \subset \overline{\operatorname{Per} \stab V} \cap\{\varepsilon=0\} \cap \ELBS (v_0)$}

Let $p \in \overline{\operatorname{Per} V} \cap\{\varepsilon=0\}$. Then there exists a sequence of points $\{p_n\}$ such that $p_n$ lies on a limit cycle $l_{n}$ of $v_{\varepsilon_n}$, $\lim_{n \to \infty} \varepsilon_n = 0$ and $\lim_{n \to \infty} p_n = p$. According to Lemma~\ref{NML}, when $n$ is big enough, each $l_n$ belongs entirely either to $S^2 \setminus (\Omega^* \cup \Omega^0)$ or to $\Omega_* \cup \Omega_0$.

\begin{itemize}
    \item[Case 1:] Each $l_n$ belongs entirely to $S^2 \setminus (\Omega^* \cup \Omega^0)$ when $n$ is big enough. Hence $p$ necessarily lies on a~hyperbolic limit cycle of $v_0 = w_0$ by lemma~\ref{NML}. Therefore, $p \in \operatorname{Per} (w_0) \subset \operatorname{Per} \stab V$.

    \item[Case 2:] Each $l_n$ belongs entirely to $\Omega_* \cup \Omega_0$ when $n$ is big enough. Recall that $\stab V$ coincides with $V$ on $\Omega_* \cup \Omega_0$, so in this case the inclusion $p \in \overline{\operatorname{Per} \stab V}$ is trivial.
\end{itemize}

We proved that $p \in \overline{\operatorname{Per} \stab V} \cap\{\varepsilon=0\}$. In other words, there is an~inclusion $\overline{\operatorname{Per} V} \cap\{\varepsilon=0\} \subset \overline{\operatorname{Per} \stab V} \cap\{\varepsilon=0\}$. The stated inclusion, $\overline{\operatorname{Per} V} \cap\{\varepsilon=0\} \cap \ELBS (v_0) \subset \overline{\operatorname{Per} \stab V} \cap\{\varepsilon=0\} \cap \ELBS (v_0)$, now follows immediately.

\proofpart{2}{$\overline{\operatorname{Sep} V} \cap\{\varepsilon=0\} \cap \ELBS (v_0) \subset \overline{\operatorname{Sep} \stab V} \cap\{\varepsilon=0\} \cap \ELBS (v_0)$}

Let $p \in \overline{\operatorname{Sep} V} \cap\{\varepsilon=0\}\cap \ELBS (v_0)$. Then there exists a sequence of points $\{p_n\}$ such that each $p_n$ lies on a separatrix $\gamma_n$ of $v_{\varepsilon_n}$, $\lim_{n \to \infty} \varepsilon_n = 0$ and $\lim_{n \to \infty} p_n = p$. Let each separatrix $\gamma_n$ be unstable and emanate from a~singular point $s_n$ of $v_{\varepsilon_n}$. According to the agreement above, we may assume that $\lim_{n \to \infty} s_n = s$ for some singular point $s$ of $v_0 = w_0$.

Since $p \in \LBS (V)$, $p \in \Omega_* \cup \Omega_0$. Two cases are possible depending on where $s$ is located.

\begin{itemize}
    \item[Case 1:] The singular point $s$ belongs to $\Omega^*$. Recall that there is no singular points of $v_0$ in the margin $\Omega^* \setminus \Omega_*$ by remark~\ref{NML-rem}, hence $s$ is, actually, in $\Omega_*$. Passing to a subsequence, we may assume that each $s_n$ belongs to $\Omega_*$. Thus, $\Omega_*$ contains almost all terms of sequences $p_n$ and $s_n$.

    Now it's time to use the fact that $v_\varepsilon|_{\Omega_*} = w_\varepsilon|_{\Omega_*}$. In particular, one can see that $s_n$ is a singular point of $w_{\varepsilon_n}$, and this singular point of $w_{\varepsilon_n}$ has an unstable separatrix which surely coincides with $\gamma_n$ in $\Omega_*$ until it leaves~$\Omega_*$.

    Since $\Omega_*$ has the no-entrance property, when $n$ is big enough, the~whole arc of $\gamma_n$ between $s_n$ and $p_n$ is contained in $\Omega_*$. Hence, $p_n$ does belong to the~corresponding separatrix of~$w_{\varepsilon_n}$, and we can finally conclude that $p \in \overline{\operatorname{Sep} \stab V} \cap \{\varepsilon=0\} \cap \ELBS (v_0)$.

    \item[Case 2:] Singular point $s$ does not belong to $\Omega^*$. According to remark~\ref{NML-rem}, $s \notin \partial \Omega^*$. So, passing to a subsequence, we may assume that each $s_n$ belongs to $S^2 \setminus \Omega^*$. Recall that $\gamma_n$ emanates from $s_n$. The No-entrance lemma implies that $\gamma_n$ belongs entirely to $S^2 \setminus \Omega^*$ for almost all $n$. Hence, $p_n \in \Omega_0$ for almost all $n$. But $\Omega_0 \cap \LBS (V)$ contains only cycles and Andronov--Hopf singular points, so in this case the necessary inclusion $p \in \overline{\operatorname{Sep} \stab V} \cap \{\varepsilon=0\} \cap \ELBS (v_0)$ is trivial.
\end{itemize}
\end{proof}

\subsection{Proof of the hard inclusion: the LBS of a stabilization belongs to the LBS of the original family, proof of Theorem~\ref{ST2}}
\label{subsec:hard}

\begin{proof}
In this section, we will complete the proof of Theorem~\ref{ST1} and prove Theorem~\ref{ST2}. In what follows, $W = \{w_\varepsilon \mid \varepsilon \in B\}$ is either $\stab V$ (no~assumptions on $Z = \LBS (V)$), or $\splt V$ ($Z$ is disconnected).

The proof of part 1 below is simple, although there are four short cases. The proof of the part 2 is divided into $5$ claims and the final step. The claims for both theorems coincide. We use mainly the fact that if $\varepsilon_n \to 0$, then $w_{\varepsilon_n} \to v_0$ (the convergence is uniform). The final step is slightly different for the two theorems. At this stage we have to refer to the definition of $W$. We also refer to it implicitly in the fourth claim, where we need a~fact that is valid for both the stabilization and the splitting.

\proofpart{1}{$\overline{\operatorname{Per} V} \cap \{\varepsilon=0\} \cap \ELBS (v_0) \supset \overline{\operatorname{Per} W} \cap\{\varepsilon=0\} \cap \ELBS (v_0)$}

Let $l_n$ be a sequence of limit cycles in $\operatorname{Per}W$: $l_n$ is a limit cycle of~$w_{\varepsilon_n}$ and $\lim_{n \to \infty} \varepsilon_n = 0$. Then $l_n$ accumulate to one of the following four sets:
\begin{itemize}
    \item[(i)] a hyperbolic limit cycle of~$w_0 = v_0$,
    \item[(ii)] a non-hyperbolic limit cycle of~$w_0 = v_0$,
    \item[(iii)] a non-hyperbolic singular point of~$w_0 = v_0$,
    \item[(iv)] a polycycle of~$w_0 = v_0$.
\end{itemize}

The first two cases, (i) and (ii), are trivial, because each limit cycle of $v_0$ belongs to $\operatorname{Per}(v_0) \subset \operatorname{Per} V$ by the definition.

Consider the last two cases, (iii) and (iv). Since the~limit $l = \lim_{n \to \infty} l_n$ is either a polycycle or a non-hyperbolic singular point, it belongs to $Z = \LBS (V)$. But then there is an open neighborhood of $Z$ on $S^2$ that contains~$l$ and where $W$ and $V$ coincide. Hence $l_n$ is a trajectory of $v_{\varepsilon_n}$ for all but a~finite number of indexes. As $\lim_{n \to \infty} \varepsilon_n = 0$, $l \subset \overline{\operatorname{Per} V} \cap \{\varepsilon=0\}$.

We proved that $\overline{\operatorname{Per} V} \cap\{\varepsilon=0\} \supset \overline{\operatorname{Per} W} \cap\{\varepsilon=0\}$. The stated inclusion, $\overline{\operatorname{Per} V} \cap \{\varepsilon=0\} \cap \ELBS (v_0) \supset \overline{\operatorname{Per} W} \cap\{\varepsilon=0\} \cap \ELBS (v_0)$, now follows immediately.

\proofpart{2}{$\overline{\operatorname{Sep} W} \cap\{\varepsilon=0\} \cap \ELBS (v_0) \subset \overline{\operatorname{Sep} V} \cap\{\varepsilon=0\} \cap \ELBS (v_0)$}

We begin in a similar way to the proof of the inverse inclusion. Let 
$$
p \in \overline {\operatorname{Sep} W} \cap \{\varepsilon = 0\} \cap \ELBS (v_0).
$$
Then $p = \lim_{n \to \infty} p_n, \ p_n \in \gamma_n$, where $\gamma_n$ is a separatrix of $w_n := w_{\varepsilon_n}$, $\varepsilon_n \to 0$. Let $\gamma_n$ be an unstable separatrix of a singular point $s_n$, and $ s= \lim_{n \to \infty} s_n$ is a~singular point of $w_0 = v_0$.

\begin{claim}\label{cl1}
    $s \in \Omega_*$.
\end{claim}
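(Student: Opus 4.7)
The plan is to argue by contradiction, assuming $s \notin \Omega_*$. Because $s$ is a singular point of $v_0 = w_0$ and the margin $\overline{\Omega^*} \setminus \Omega_*$ contains no singular points of $v_0$ by Remark~\ref{NML-rem}, the stronger conclusion $s \notin \overline{\Omega^*}$ follows. By continuity, $s_n \notin \overline{\Omega^*}$ for all sufficiently large $n$, and moreover $s_n$ must be a hyperbolic saddle of $w_{\varepsilon_n}$: other hyperbolic types have no separatrices, while non-hyperbolic singular points of $w_\varepsilon$ outside $\Omega^*$ are excluded by Lemma~\ref{NML} applied to $W$ (whose $\LBS$, once Part~1 is in hand for $W$, is contained in $\Omega^* \cup \Omega^0$).

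Next I would transfer the no-entrance property from $v_\varepsilon$ to $w_\varepsilon$. The No-entrance Lemma~\ref{NEL} supplies the property for $v_\varepsilon$ on $\Omega^*$; because $w_\varepsilon \to w_0 = v_0$ in $C^k$ as $\varepsilon \to 0$ and the property is open in the vector field (it depends only on the transversal crossing behavior of separatrices along $\partial \Omega^*$, where $w_\varepsilon$ is $C^k$-close to $v_0$), it persists for $w_\varepsilon$ at small $\varepsilon$. Hence the unstable separatrix $\gamma_n$ issuing from $s_n$, which is outside $\overline{\Omega^*}$ at its starting point, cannot cross into $\Omega^*$; so $p_n \notin \Omega^*$ for all large $n$, and $p \in S^2 \setminus \Omega^*$.

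To derive a contradiction, I use the hypothesis $p \in \ELBS(v_0)$. Passing to a subsequence if necessary, on compact time intervals the $\gamma_n$'s converge to a separatrix $\eta$ of $v_0$ starting at the hyperbolic saddle $s$, and $p$ lies in $\overline{\eta}$. Since $\alpha(\eta) = \{s\}$ is already an interesting limit set (the saddle $s$ is not a hyperbolic attractor or repeller), the assumption $p \in \ELBS(v_0)$ combined with $p \in \overline{\eta}$ forces $\omega(\eta)$ to be interesting as well: otherwise no point of $\overline{\eta}$ would belong to $\ELBS(v_0)$. Then $\eta$ is an interesting orbit of $v_0$, so $s$ lies in the closure of the set of interesting orbits, whence $s \in \ELBS(v_0) \cap \operatorname{Sing}(v_0) \subset \LBS(V)$. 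Being a hyperbolic saddle, $s$ does not belong to the list $Z^0_V$ of non-interesting cycles and Andronov--Hopf points, so $s \in Z^*_V \subset \Omega_*$, contradicting the assumption $s \notin \Omega_*$.

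The main obstacle is the forced interesting-ness of $\omega(\eta)$: it requires a case analysis via the Poincar\'e--Bendixson theorem (with $\omega(\eta)$ a singular point, a limit cycle, or a polycycle) together with a careful use of the characterization of $\ELBS(v_0) \cap (S^2 \setminus \Omega^*)$; in particular one must rule out the scenario in which $\omega(\eta)$ is a hyperbolic attractor or a non-interesting limit cycle, which is done by combining $p \in \ELBS(v_0)$ with $p \in \overline{\eta}$. A secondary technical point is justifying the transfer of the no-entrance property from $v_\varepsilon$ to $w_\varepsilon$ despite the fact that $w_\varepsilon$ may differ from $v_\varepsilon$ near $\partial \Omega^*$; the key observation is that both fields are uniformly $C^k$-close to $v_0$ for small $\varepsilon$.
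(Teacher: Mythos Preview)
Your overall strategy---contradict $s \notin \Omega_*$ by eventually forcing $s \in \LBS(V)$---matches the spirit of the paper's Case~1, but there are two genuine gaps.

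First, the appeal to Lemma~\ref{NML} for $W$ is circular. You invoke it to exclude non-hyperbolic singular points of $w_\varepsilon$ outside $\Omega^*$, justified by ``$\LBS(W) \subset \Omega^* \cup \Omega^0$ once Part~1 is in hand''. But Part~1 only controls $\overline{\operatorname{Per} W}$; the set $\LBS(W)$ also depends on $\overline{\operatorname{Sep} W}$, which is exactly what Part~2 (and hence this claim) is trying to bound. The paper avoids this entirely by using the structural fact that $w_\varepsilon \equiv v_0$ on $S^2 \setminus (\Omega^* \cup \Omega^0)$: once $s$ is outside $\overline{\Omega^* \cup \Omega^0}$, one has $s_n = s$ literally, a fixed hyperbolic singular point of $v_0$. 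You never use this identity, and you also never rule out $s \in \Omega^0$ (the paper does so by noting $\Omega^0$ contains only monodromic singular points).

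Second, and more seriously, the assertion ``$p \in \overline{\eta}$'' can fail. Convergence of $\gamma_n$ to $\eta$ on compact time intervals does not force $p = \lim p_n$ into $\overline{\eta}$ when the time from $s_n$ to $p_n$ is unbounded. Concretely, suppose $\eta$ winds onto a non-interesting semi-stable cycle $c \subset \Omega^0$, so that $\overline{\eta} = \{s\} \cup \eta \cup c$. For $w_{\varepsilon_n}$ (which may differ from $v_0$ inside $\Omega^0$) the cycle $c$ can vanish, letting $\gamma_n$ pass through the annulus that trapped $\eta$; then $p_n$ may lie on the far side of $c$, and $p \notin \overline{\eta}$. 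The paper handles exactly this scenario as its Case~2: once $\gamma_n$ enters $\Omega^0$, the transversal boundary of $\Omega^0$ forces the $v_0$-trajectory of $p$ to have an $\alpha$- or $\omega$-limit set inside $\Omega^0$, hence non-interesting, contradicting $p \in \ELBS(v_0)$. Your outline acknowledges an obstacle around non-interesting cycles but frames it as a property of $\omega(\eta)$; the real issue is that $p$ need not lie on $\overline{\eta}$ at all, and a separate argument for the $\Omega^0$ case is required.
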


\begin{proof}
    Suppose first that $s$ lies outside of $\overline{\Omega^* \cup \Omega^0}$. Then the same holds for almost all of $s_n$. The~vector field $w_n$ coincides with $v_0 = w_0$ outside of $\Omega^* \cup \Omega^0$. Hence, $s_n = s$ and each separatrix $\gamma_n$ belongs entirely to $S^2 \setminus \Omega^*$ by the No-entrance lemma. There are two different cases depending on either the~orbits $\gamma_n$ enter $\Omega^0$ or not.

    \begin{itemize}
        \item[Case 1:] All but finitely many $\gamma_n$ do not enter $\Omega^0$. Recall again that the~vector field $w_n$ coincides with $v_0 = w_0$ outside of $\Omega^* \cup \Omega^0$. Hence $p$ belongs to $\overline{\gamma}$ for some separatrix $\gamma$ of $v_0$ emerging from $s$ and disjoint from $\Omega^* \cup \Omega^0$. But the $\omega$-limit set of $\gamma$ is non-interesting (otherwise $\gamma$ would be a part of $\LBS (V)$), so $p \not \in \ELBS (v_0)$, a~contradiction.

        \item[Case 2:] All but finitely many $\gamma_n$ enter $\Omega^0$. By the construction, $\Omega^0$ is a disjoint union of annular neighborhoods of limit cycles. Each such neighborhood is a disjoint union of a limit cycle and two open annuli; each annulus is either an absorbing or repelling domain\footnote{Those domains are absorbing or repelling with respect to $v_0 = w_0$, but these properties are preserved by continuity for small values of parameters.}. Hence $p$ belongs to a~trajectory of $v_0$ such that its $\alpha$- or $\omega$-limit set is contained in $\Omega^0$. (Note that $p$ may or may not be located on a separatrix of $s$ or on a limit sets of some separatrix of $s$, because some nested non-interesting limit cycles of even multiplicity inside $\Omega^0$ can vanish due to bifurcations.) But~$\Omega^0$ contains only non-interesting limit sets, so this limit set is either an~Andronov--Hopf singular point or a non-hyperbolic non-interesting limit cycle. Thus, $p \not \in \ELBS (v_0)$, a~contradiction again.
    \end{itemize}

    We have just proved that $s \in \overline{\Omega^* \cup \Omega^0}$. By remark~\ref{NML-rem}, $\partial \Omega^*$ contains no singular points of $v_0$. The same also true for $\partial \Omega^0$ by construction. Observe that $\Omega^0$ contains only monodromic singular points, so $s \notin \Omega^0$. This way we conclude that $s$ lies inside $\Omega^*$. Hence $s \in \Omega_*$, because the margin $\Omega^* \setminus \Omega_*$ contains no singular points of~$v_0$ by remark~\ref{NML-rem}.
\end{proof}

Denote by $\gamma$ the $v_0$-trajectory of $p$, and by $\alpha(\gamma)$ the $\alpha$-limit set of $\gamma$.

\begin{claim}\label{cl2}
     $\alpha(\gamma) \subset \Omega_*$.
\end{claim}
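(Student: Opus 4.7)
The plan is to show that $\alpha(\gamma)$ is an \emph{interesting} limit set of $v_0$; once this is established, the inclusion $\alpha(\gamma) \subset Z^*_V \subset \Omega_*$ follows from the definition of $Z^*_V$ and from the construction of $\Omega_*$ in Subsection~\ref{subsec:choice}.

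First I would verify that $\alpha(\gamma)$ is interesting by dichotomizing on the assumption $p \in \ELBS(v_0)$. If $p$ is itself a non-hyperbolic singular point of $v_0$ or lies on a non-hyperbolic limit cycle of $v_0$, then $\gamma$ reduces to $\{p\}$ or coincides with that cycle, so $\alpha(\gamma) = \gamma$; the implicit exclusion $p \notin \Omega_0$ (we are working inside $\Acc(W) \setminus \Omega_0$) rules out the non-interesting part $Z^0_V$, leaving $\alpha(\gamma)$ interesting. Otherwise, $p$ lies in the closure of the set of points $x$ whose $v_0$-trajectories have both $\alpha$- and $\omega$-limit sets interesting; pick $x_k \to p$ from this set. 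Assume, toward contradiction, that $\alpha(\gamma)$ is not interesting. By the Poincar\'e--Bendixson trichotomy $\alpha(\gamma)$ is a singular point, a limit cycle, or a polycycle; polycycles always contain a saddle-like singular point and are automatically interesting, so $\alpha(\gamma)$ must be either a hyperbolic repelling singular point, a hyperbolic repelling limit cycle, or a one-sidedly repelling non-hyperbolic limit cycle that fails the interestingness criterion. Each of these has an open (possibly one-sided) basin of backward attraction containing $p$; for $k$ large, $x_k$ lies in this basin, so $\alpha(x_k) = \alpha(\gamma)$ is non-interesting, contradicting the choice of $x_k$.

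Second, by Poincar\'e--Bendixson, $\alpha(\gamma)$ is a singular point, a limit cycle, or a polycycle of $v_0$, hence $\alpha(\gamma) \subset \operatorname{Sing}(v_0) \cup \Acc(V)$. Its interesting character places each of its points in $\ELBS(v_0)$: directly for non-hyperbolic singularities and non-hyperbolic cycles, and via the closure clause for hyperbolic saddles in $\alpha(\gamma)$ (whose $\alpha$- and $\omega$-limits coincide with themselves and are thus interesting) and for separatrix arcs of a polycycle (whose limits are interesting singular points of that polycycle). Therefore $\alpha(\gamma) \subset \LBS(V)$, and interestingness refines this inclusion to $\alpha(\gamma) \subset Z^*_V \subset \Omega_*$.

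The principal obstacle lies in the contradiction argument of the first step: one must verify that every non-interesting $\alpha$-limit set of $v_0$ has an open basin of backward attraction that persists under small perturbations of the initial point. For hyperbolic repelling singular points this is the Grobman--Hartman theorem; for hyperbolic repelling limit cycles it is the Floquet normal form; for one-sidedly repelling non-hyperbolic cycles, a standard one-sided normal form around the cycle produces an open basin on the repelling side, which is all that is needed.
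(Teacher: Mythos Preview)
Your route is essentially the paper's argument unpacked. The paper compresses everything into a single contrapositive: if $\alpha(\gamma) \subset S^2 \setminus \Omega^*$, then $\alpha(\gamma)$ must be a non-interesting limit set, whence $p \notin \ELBS(v_0)$ (this is precisely your basin-of-repulsion step, left implicit there); then Remark~\ref{NML-rem} pushes $\alpha(\gamma)$ from $\Omega^*$ down to $\Omega_*$. You instead prove interestingness first and then chase the chain $\alpha(\gamma) \subset \LBS(V) \to Z^*_V \to \Omega_*$; the ingredients are identical, only the order is reversed and the details are spelled out.

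There is, however, a slip in your last implication ``interestingness refines this inclusion to $\alpha(\gamma) \subset Z^*_V$''. The paper uses \emph{interesting} in two incompatible senses. By Definition~\ref{def:int} a limit set is interesting as soon as it contains a singular point other than a hyperbolic attractor or repeller; an Andronov--Hopf point, being non-hyperbolic, therefore \emph{is} an interesting limit set. But in the decomposition $Z = Z^*_V \sqcup Z^0_V$ of Subsection~\ref{subsec:choice}, Andronov--Hopf points are expressly declared non-interesting and placed in $Z^0_V \subset \Omega_0$, not in $Z^*_V$. Hence your Step~1 does not exclude the possibility $\alpha(\gamma) = \{\text{AH point}\}$, and Step~2 then reaches the wrong conclusion for it. (The paper's own three-line proof is equally silent on this case.) To close the gap you must treat this scenario separately, or else relax the claim to $\alpha(\gamma) \subset \Omega_* \cup \Omega_0$ and check that the downstream uses in Claims~\ref{cl3}--\ref{cl5} still go through.
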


\begin{proof}
    Indeed, if $\alpha(\gamma) \subset S^2 \setminus \Omega^*$, then $\alpha(\gamma)$ is either a non-interesting limit cycle or a non-interesting singular point, so $p \notin \ELBS(v_0)$, a~contradiction. Hence $\alpha(\gamma) \subset \Omega^*$, and the claim follows since the margin $\Omega^* \setminus \Omega_*$ contains no singular points or limit cycles of $w_0 = v_0$ by remark~\ref{NML-rem}.
\end{proof}

The next claim is the most conceptual among all five. Heuristically it says that connected components of $\Omega_*$ are isolated in some sense from each other. Or, in other words, an orbit that connects two disjoint components of $\Omega_*$ and has interesting $\alpha$- and $\omega$-limits sets does not exist.

\begin{claim}\label{cl3}
    If both limit sets of $\gamma$ are interesting, then $\gamma \subset \LBS(V)$.
\end{claim}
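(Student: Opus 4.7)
The plan is to decompose the target inclusion $\gamma \subset \LBS(V) = \ELBS(v_0) \cap (\operatorname{Sing}(v_0) \cup \operatorname{Acc}(V))$ into its two factors and treat them separately.

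The inclusion $\gamma \subset \ELBS(v_0)$ I expect to be almost immediate from the definition of $\ELBS$. Every point on the planar $v_0$-orbit $\gamma$ shares the same $\alpha$- and $\omega$-limit sets, namely $\alpha(\gamma)$ and $\omega(\gamma)$, both of which are interesting by hypothesis. Hence the whole of $\gamma$ lies in the set of points whose closure is the ``interesting orbit'' part of $\ELBS(v_0)$, giving $\gamma \subset \ELBS(v_0)$.

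For the second factor I would first establish the time-reversed analog of Claim~\ref{cl2}: the very same argument, applied to $-v_0$ (which swaps the roles of $\alpha$ and $\omega$ and of stable/unstable separatrices), yields $\omega(\gamma) \subset \Omega_*$. Then I would use the equality $V \equiv W$ on $\Omega_*$, together with Claim~\ref{cl1} that places the limit singular point $s$ inside $\Omega_*$, to lift the approximating separatrices $\gamma_n$ of $w_{\varepsilon_n}$ to separatrices $\widetilde{\gamma}_n$ of $v_{\varepsilon_n}$ emanating from the very same singular points; inside $\Omega_*$ one has $\gamma_n = \widetilde{\gamma}_n$ as point sets. Combining the No-entrance lemma (which prevents $\widetilde{\gamma}_n$ from re-entering $\Omega_*$ once it has left) with continuous dependence of trajectories on parameters and the convergence $v_{\varepsilon_n} \to v_0$, a subsequence of $\widetilde{\gamma}_n$ converges on every compact time interval to a $v_0$-separatrix through $s$, and uniqueness of $v_0$-orbits through a given point forces this limit separatrix to coincide with $\gamma$. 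One then concludes $\gamma \subset \operatorname{Sep}(v_0) \subset \overline{\operatorname{Sep} V} \cap \{\varepsilon = 0\} \subset \operatorname{Acc}(V)$.

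The main obstacle is controlling $\gamma$ outside $\Omega_*$ and ensuring that the approximating separatrices of $v_{\varepsilon_n}$ (as opposed to those of $w_{\varepsilon_n}$, for which convergence to $\gamma$ is given from the setup) actually accumulate on $\gamma$ itself and not on some neighboring $v_0$-orbit: the No-entrance lemma and uniqueness of ODE solutions are the key tools here, but matching the two sequences of orbits globally, not only inside $\Omega_*$, is the delicate step. A secondary subtlety arises in the degenerate case where $\alpha(\gamma)$ is a monodromic non-hyperbolic singular point (interesting as a limit set yet classified as non-interesting in the $\LBS^*$ decomposition), so that $\gamma$ need not be a separatrix of $v_0$; in that case I would expect to approximate $\gamma$ by limit cycles arising in nearby members of the family and land in $\overline{\operatorname{Per} V} \cap \{\varepsilon = 0\}$ via a parallel argument, using that $\alpha(\gamma)$ and $\omega(\gamma)$ both lying in $\Omega_*$ forces $\gamma$ to stay close to $\LBS(V)$ on its whole length.
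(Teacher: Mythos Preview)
Your treatment of the inclusion $\gamma \subset \ELBS(v_0)$ is correct and matches the paper: both limit sets are interesting, so every point of $\gamma$ is in $\ELBS(v_0)$ by definition.

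The second half, however, contains a genuine gap. You aim to conclude $\gamma \subset \operatorname{Sep}(v_0)$, but this need not hold. An interesting $\alpha$-limit set can be an interesting limit cycle (Definition~\ref{def:int}), in which case $\gamma$ spirals onto a cycle and is not a separatrix of $v_0$ at all. Your ``secondary subtlety'' paragraph addresses only the monodromic singular-point case, and the proposed fix---approximating $\gamma$ by limit cycles of nearby fields---does not work: the cycles born in a Hopf-type bifurcation shrink to the singular point, they do not sweep out the orbit $\gamma$. Even in the case where $\alpha(\gamma)$ is a saddle-type singular point, your argument that ``uniqueness of $v_0$-orbits forces the limit separatrix to coincide with $\gamma$'' is incomplete: you need $p$ to lie on $\lim \widetilde{\gamma}_n$, but $\widetilde{\gamma}_n$ and $\gamma_n$ agree only on their initial arcs inside $\Omega_*$, and you have not shown that the arc of $\gamma_n$ from $s_n$ to $p_n$ stays in $\Omega_*$. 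You identify this as the ``main obstacle'' but do not resolve it.

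The paper avoids both difficulties with one move you are missing: it does not try to prove that $\gamma$ is a separatrix of $v_0$. Instead, it takes the backward-invariant arc $\gamma_* \subset \Omega_*$ of $\gamma$ (which exists by Claim~\ref{cl2}), uses the No-entrance lemma to show that the arcs of $\gamma_n$ reach any small neighbourhood of $\gamma_*$ without leaving the connected component $U$ of $\Omega_*$ containing $s$, and exploits that on $U$ one has $w_{\varepsilon_n}=v_{\varepsilon_n'}$ for a suitable $\varepsilon_n'\to 0$ (this is where your assertion ``$V\equiv W$ on $\Omega_*$'' needs refinement for the splitting case). This gives $\gamma_*\subset \overline{\operatorname{Sep} V}\cap\{\varepsilon=0\}\subset\operatorname{Acc}(V)$, hence $\gamma_*\subset\LBS(V)$. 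The final step is to invoke the $v_0$-invariance of $\LBS(V)$ to pass from the arc $\gamma_*$ to the whole orbit $\gamma$. This invariance argument is what replaces your attempt to identify $\gamma$ as a separatrix, and it works regardless of whether $\alpha(\gamma)$ is a saddle, a degenerate singular point, or an interesting cycle.
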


\begin{proof}
    If both limit sets of $\gamma$ are interesting, then $\gamma \subset \ELBS(w_0) = \ELBS(v_0)$. Let $\gamma_*$ be the arc of $\gamma$ such that $\gamma_* \subset \Omega_*$, $\gamma_*$ is backward invariant and $\alpha(\gamma) \subset \overline{\gamma_*}$. Due to claim~\ref{cl2}, such an arc exists. Moreover, it is clearly unique.
    
    Recall that $\gamma \subset \lim_{n \to \infty} \gamma_n$, and each $\gamma_n$ is a~separatrix~of~$w_{\varepsilon_n}$ emanating from singular point $s_n$. According to claim~\ref{cl1}, we may assume that each $s_n$ is in $\Omega_*$. Observe that $A \subset \lim_{n \to \infty} \gamma_n$. The crucial point is to prove that for any open neighborhood of $\gamma_*$ for all sufficiently large values of $n$ $\gamma_n$ does not leave $\Omega_*$ before it enters a small neighborhood of $\gamma_*$.

    By the construction, $\gamma_* \subset \Omega_*$, so it's enough to consider an open neighborhood $U_* \subset \Omega_*$. For~the~connected component $U$ of~$\Omega_*$ which contains $s$ one can find a~sequence~$\varepsilon_n'$, $\lim_{n \to \infty} \varepsilon_n' = 0$, such that $w_{\varepsilon_n}|_{U} = v_{\varepsilon_n'}|_{U}$\footnote{This statement is true regardless of whether $W$ is a stabilization or a splitting. The proof is a bit different, though, but uses only the respective definitions in both cases.}. In~particular, $v_{\varepsilon_n'}$ has a~separatrix $\gamma_n'$ that coincides with $\gamma_n$ until both leave $U \subset \Omega_*$ for the first time. But if $\gamma_n$ left $\Omega_*$ and $n$ is big enough, then $\gamma_n$ would enter $\Omega_*$ again because $U_* \subset \Omega_*$. This is prohibited by the No-entrance lemma for all sufficiently large values of $n$.

    Hence $s$ and $\gamma_*$ are contained in the same connected component $U$ of $\Omega_*$, and we know that $v_{\varepsilon_n'}$ has a~separatrix $\gamma_n'$ that coincides with $\gamma_n$ until both leave $U \subset \Omega_*$ for the first time, which happens before $\gamma_n$ enters an arbitrarily small neighborhood of $\gamma_*$ unless $n$ is not sufficiently large. This way we have showed that $\gamma_* \subset \overline{\operatorname{Sep} V} \cap\{\varepsilon=0\}$.
    
    Therefore, by definition~\ref{def:cLBS} of $\LBS$, $\gamma_* \subset \LBS(V)$. But $\LBS(V)$ is $v_0$-invariant, so the claim follows.
\end{proof}

\begin{claim}\label{cl4}
    $p \in \Omega_*$.
\end{claim}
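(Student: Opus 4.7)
My strategy is to dispatch $p$ by cases according to its position within $\ELBS(v_0)$, reducing each case to Claims~\ref{cl2} and~\ref{cl3}. First, if $p$ is itself a non-hyperbolic singular point of $v_0$, or lies on a non-hyperbolic limit cycle of $v_0$, then the $v_0$-orbit $\gamma$ of $p$ coincides with its own $\alpha$-limit set (the singleton $\{p\}$ itself or the cycle). By Claim~\ref{cl2}, $p \in \gamma = \alpha(\gamma) \subset \Omega_*$ at once.

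Now suppose $p$ is a regular point of $v_0$, so that $\gamma$ is a non-trivial $v_0$-orbit. Claim~\ref{cl2} already places $\alpha(\gamma)$ inside $\Omega_*$; since $\Omega_*$ is chosen disjoint from $\Omega_0$, which absorbs the non-interesting cycles and singular points of $v_0$, the set $\alpha(\gamma)$ contains no non-interesting limit and is therefore an interesting limit set. Next I argue that $\omega(\gamma)$ is also interesting. Otherwise $\omega(\gamma)$ would be either a hyperbolic attractor of $v_0$, or a non-interesting non-hyperbolic limit cycle attracting on the side on which $p$ lies. In either subcase a whole $v_0$-neighborhood of $p$ falls into the same attracting basin, so every point of that neighborhood shares the same non-interesting $\omega$-limit and hence lies outside the set $S := \{ x \in S^2 : \alpha_{v_0}(x) \text{ and } \omega_{v_0}(x) \text{ are interesting} \}$. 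This would contradict $p \in \overline{S}$, a membership forced by $p \in \ELBS(v_0)$ together with the regularity of $p$.

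With both $\alpha(\gamma)$ and $\omega(\gamma)$ interesting, Claim~\ref{cl3} gives $\gamma \subset \LBS(V) = Z^0_V \sqcup Z^*_V$. Since $Z^0_V$ consists only of non-interesting singular points and limit cycles of $v_0$, none of which can meet the regular orbit $\gamma$, we conclude $\gamma \subset Z^*_V \subset \Omega_*$, and in particular $p \in \Omega_*$. The main technical point will be the openness of the basin of a non-interesting $\omega$-limit: immediate for a hyperbolic attractor, but for a non-interesting non-hyperbolic cycle it requires correctly identifying the side from which $\gamma$ approaches and invoking the local flow structure there.
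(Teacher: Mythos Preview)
Your argument is correct and follows essentially the same route as the paper: establish that both limit sets of $\gamma$ are interesting, invoke Claim~\ref{cl3} to get $\gamma \subset \LBS(V)$, and then place $\gamma$ inside $\Omega_*$. The paper's version is merely more compressed: instead of proving separately that $\alpha(\gamma)$ and $\omega(\gamma)$ are interesting, it argues by contradiction that if either were non-interesting then $p \notin \ELBS(v_0)$ (your openness-of-basin argument is exactly a proof of this implication), and in the remaining case uses Claim~\ref{cl3} together with the connectedness of $\gamma$ and the disjointness of $\Omega_*,\ \Omega_0$ to conclude $\gamma \subset \Omega_*$.

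Two small omissions in your write-up. First, your dichotomy ``non-hyperbolic singular point / non-hyperbolic cycle'' versus ``regular point'' leaves out hyperbolic singular points; a hyperbolic saddle can lie in $\ELBS(v_0)$, but your Case~1 reasoning via Claim~\ref{cl2} applies to it verbatim since $\gamma = \alpha(\gamma) = \{p\}$. Second, in your enumeration of possible non-interesting $\omega$-limits you list only hyperbolic attractors and non-interesting non-hyperbolic cycles, but hyperbolic (attracting) limit cycles are also non-interesting; the same openness-of-basin argument handles them. Neither omission affects the validity of the approach.
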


\begin{proof}
    Let, on the contrary, $p \notin \Omega_*$.

    Suppose first, that at least one limit set of $\gamma$ under $v_0$ is non-interesting. Then it follows immediately that $p \notin \ELBS(v_0)$. This is a contradiction with the choice of $p$.

    Thus, both limit sets of $\gamma$ under $v_0$ are interesting. It follows from claim~\ref{cl2} that $\gamma \subset \LBS(V) \subset \Omega_* \cup \Omega_0$. Since $\Omega_0$ is disjoint from $\Omega_*$, and $\gamma$ is connected, it follows that $p \in \gamma \subset \Omega_*$. That contradicts our initial assumption.
\end{proof}

\begin{claim}\label{cl5}
    Arcs $\bar \gamma_n(s_n, p_n)$ of separatrices of vector fields $w_n$ belong to $\Omega_*$
\end{claim}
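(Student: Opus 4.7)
The plan is to combine uniform convergence $w_{\varepsilon_n}\to v_0$ with a single application of the No-entrance lemma to the original family $V$. I first upgrade Claims~\ref{cl1}--\ref{cl4} to the stronger assertion that $s$ and $p$ lie in the same connected component $U$ of $\Omega_*$, and that the closure $\overline{\gamma}$ of the entire $v_0$-trajectory through $p$ is compactly contained in $U$. Since $p\in\ELBS(v_0)$, both $\alpha(\gamma)$ and $\omega(\gamma)$ are interesting, so the argument of Claim~\ref{cl2} applied to each end gives $\alpha(\gamma),\omega(\gamma)\subset\Omega_*$. The set $\alpha(\gamma)$ is connected and contains $s$ (by passing to the limit in $s_n\in\alpha(\gamma_n)$), hence $\alpha(\gamma)\subset U$. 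Claim~\ref{cl3} together with $p\in\Omega_*$ then forces $\gamma\subset\LBS(V)\subset\Omega_*\sqcup\Omega_0$, and connectedness of $\gamma$ places $\gamma\subset U$; similarly $\omega(\gamma)\subset U$. So $\overline{\gamma}$ is compactly contained in the open set $U$ and lies at positive distance from $\partial U$.

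Next, I reparametrize so that $\gamma_n(0)=p_n$, $\gamma_n(t)\to s_n$ as $t\to-\infty$, and $\gamma(0)=p$. The plan is to show $\gamma_n|_{(-\infty,0]}\subset U$ for all $n$ large, splitting the argument at a fixed sufficiently negative time $-T$. For the finite piece $\gamma_n|_{[-T,0]}$, continuous dependence of ODE solutions on the vector field, applied to $w_{\varepsilon_n}\to v_0$ uniformly and $p_n\to p$, gives uniform convergence $\gamma_n|_{[-T,0]}\to\gamma|_{[-T,0]}$; since $\gamma|_{[-T,0]}$ sits compactly in $U$, it follows that $\gamma_n|_{[-T,0]}\subset U$ once $n$ is large enough.

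For the infinite tail $\gamma_n|_{(-\infty,-T]}$ I invoke the identification $w_{\varepsilon_n}|_U=v_{\varepsilon_n'}|_U$ with $\varepsilon_n'\to 0$, already established inside the proof of Claim~\ref{cl3} (this is the implicit reference to the definition of $W$ mentioned in the introduction to Part~2 of the proof, and it works equally for the stabilization and the splitting). Let $\gamma_n'$ denote the unstable separatrix of $v_{\varepsilon_n'}$ emanating from $s_n$; then $\gamma_n$ and $\gamma_n'$ coincide on every connected arc contained in $U$. We have $\gamma_n'(t)\to s_n\in U$ as $t\to-\infty$ and $\gamma_n'(-T)=\gamma_n(-T)\in U$ by the previous paragraph, so $\gamma_n'$ lies in $U$ at both ends of the tail. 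The No-entrance lemma~\ref{NEL} applied to $V$ and $\Omega_*$ asserts that for $n$ large the separatrix $\gamma_n'$ crosses $\partial\Omega_*$ at most once; hence it cannot exit and re-enter $U$ on $(-\infty,-T]$, so $\gamma_n'|_{(-\infty,-T]}\subset U$. Therefore $\gamma_n|_{(-\infty,-T]}\subset U$ too, and combining with the previous paragraph gives the whole arc $\bar{\gamma}_n(s_n,p_n)=\gamma_n|_{(-\infty,0]}\subset U\subset\Omega_*$.

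The main obstacle is the bookkeeping in the first step: one must make sure that $\omega(\gamma)$ lies in the \emph{same} component $U$ as $\alpha(\gamma)$, so that $\overline{\gamma}$ is compactly contained in one component; this uses Claim~\ref{cl3} in an essential way. The remaining steps are a routine combination of continuous dependence on compact time intervals with one application of the No-entrance lemma. If $p$ happens to be a singular point of $v_0$, the orbit $\gamma$ degenerates to $\{p\}$ and the finite piece $\gamma_n|_{[-T,0]}$ simply contracts to a small neighborhood of $p\in U$, so the plan applies with no change.
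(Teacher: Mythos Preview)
Your argument contains a circular step in the treatment of the tail $\gamma_n|_{(-\infty,-T]}$. You assert that $\gamma_n'(-T)=\gamma_n(-T)$ ``by the previous paragraph'', but the previous paragraph only gives $\gamma_n(-T)\in U$; it says nothing about $\gamma_n'$. The curves $\gamma_n$ and $\gamma_n'$ agree only on the maximal initial arc (starting at $s_n$) that stays in $U$. If $\gamma_n$ first exits $U$ at some time $t_1<-T$, then at that moment $\gamma_n'$ also exits $\Omega_*$ and, by the No-entrance lemma for $V$, never returns; in particular $\gamma_n'(-T)\notin\Omega_*$, so $\gamma_n'(-T)\neq\gamma_n(-T)$. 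Thus the equality $\gamma_n'(-T)=\gamma_n(-T)$ is equivalent to $\gamma_n|_{(-\infty,-T]}\subset U$, which is precisely the claim you are trying to establish. The No-entrance lemma constrains $\gamma_n'$, not $\gamma_n$; once $\gamma_n$ has left $U$ it is governed by $w_{\varepsilon_n}$, which differs from $v_{\varepsilon_n'}$ there, and nothing in your argument forbids $\gamma_n$ from re-entering $U$.

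A smaller issue: the justification ``$\alpha(\gamma)$ contains $s$ by passing to the limit in $s_n\in\alpha(\gamma_n)$'' is not valid---$\alpha$-limit sets are not continuous under this kind of limit, and in general $s$ need not lie in $\alpha(\gamma)$ at all (for instance $\alpha(\gamma)$ could be a limit cycle near $s$). The conclusion that $s$ and $\overline{\gamma}$ lie in the same component of $\Omega_*$ can in fact be extracted from the \emph{proof} of Claim~\ref{cl3}, but not from the reason you give.

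The paper's proof avoids the circularity by arguing directly with the $W$-dynamics: if the arc exits $\Omega_*$ at a first point $r_n$, then $r=\lim r_n$ has a non-interesting $\omega_{v_0}$-limit, so the forward $v_0$-orbit of $r$ enters an absorbing region $Y\subset S^2\setminus\Omega^*$; since $w_n=v_0$ outside $\Omega^*\cup\Omega^0$ and $\partial\Omega^0$ is transversal, $Y$ is absorbing for $w_n$ too, and the $w_n$-orbit of $r_n$ is trapped in $Y$, contradicting $p_n\in\Omega_*$. This is the missing ingredient: you must control the $w_n$-flow after it leaves $\Omega_*$, and the identification $w_{\varepsilon_n}|_U=v_{\varepsilon_n'}|_U$ gives no information there.
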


\begin{proof}
    As $\Omega_*$ is open, we get from claim~\ref{cl1} that $s_n \in \Omega_*$, and we get from claim~\ref{cl4} that $p_n \in \Omega_*$. By contraposition, suppose these arcs, $\gamma_n(s_n, p_n)$, intersect the boundary $\partial \Omega_*$ for the first time at the points $r_n$.
    
    Consider $\omega_{v_0}(r)$, where $r = \lim_{n \to \infty} r_n$. It is disjoint from $\Omega^*$, since the~trajectory of $r$ cannot enter $\Omega^*$ again by~the~No-entrance lemma. Then $\omega_{v_0}(r)$ is a non-interesting limit set. Hence the~forward trajectory of $r$ enters $Y \subset S^2 \setminus \Omega^*$, an absorbing domain for $v_0 = w_0$. But this domain remains absorbing for $w_n$ as well (for big enough~$n$), because $w_n = w_0$ on $S^2 \setminus (\Omega^* \cup \Omega^0)$, and $\partial \Omega^0$ consists of transversal loops.
    
    It follows by the continuous dependence on parameters theorem that the~orbits of $r_n$ enter $Y$ and, thereby, do not return to $\Omega_*$. This contradicts the~fact that $p_n \in \Omega_*$. Thus, the~arcs~$\bar \gamma_n(s_n, p_n)$ belong to $\Omega_*$.
\end{proof}

\textit{End of the proof.} At this point, the proofs of Theorems \ref{ST1} and \ref{ST2} diverge, but the conclusion of claim~\ref{cl5} is essential for both of them.

\begin{proof}[Proof of Theorem \ref{ST1}: $W = \stab V$]
    The families $V$ and $\stab V$ coincide in~$\Omega_*$. Hence, the arcs $\bar \gamma_n(s_n, p_n)$ of separatrices of $w_{\varepsilon_n}$ are at~the~same time arcs of separatrices of $v_{\varepsilon_n}$. As $\varepsilon_n \to 0$, and $p_n \to p$, we conclude that 
    $
        p \in \overline {\operatorname{Sep} V} \cap \{\varepsilon = 0\} \cap \ELBS (v_0),
    $
    as required.
\end{proof}

\begin{proof}[Proof of Theorem \ref{ST2}: $W = \splt V$.]
    Then $\Omega_*$ splits in two neighborhoods of $Z_1$ and $Z_2$; denote them by $\Omega_*^1$ and $\Omega_*^2$. We may assume that all the arcs $\bar \gamma_n(s_n, p_n)$ belong to $\Omega_*^1$, where the vector field $w_{\varepsilon_n}$ for $\varepsilon_n = (\varepsilon_n^1, \varepsilon_n^2)$ has the~form $v_{\varepsilon_n^1, 0}$. So the arcs $\bar \gamma_n$ of separatrices of the vector fields $w_{\varepsilon_n}$ are at the~same time arcs of separatrices of the vector fields $v_{\varepsilon_n^1, 0}$. As $\varepsilon_n \to 0$, and $p_n \to p$, we conclude again that
    $
        p \in \overline {\operatorname{Sep} V} \cap \{\varepsilon = 0\} \cap \ELBS (v_0).
    $
\end{proof}

Theorems \ref{ST1} and \ref{ST2} are completely proved.
\end{proof}

\section*{Acknowledgments}

The authors are grateful to the anonymous reviewer for his/her thorough and constructive feedback, which significantly contributed to improving the clarity and rigor of this paper. We also thank our colleagues: Ilya Androsov for many friendly discussions and inspiring ideas, Andrey Dukov for his valuable comments and suggestions, and Dmitry Filimonov for his constant support, inexhaustible interest in this work, crucial remarks and insightful observations.

\section*{Funding}

This article is an output of a research project implemented as part of the Basic Research Program
at HSE University (№ 075-00648-25-00).

\bibliographystyle{unsrtnat}
\bibliography{references}

\end{document}